\theoremstyle{definition}
\newtheorem{theorem}{Theorem}[section]
\newtheorem{definition}[theorem]{Definition}
\newtheorem{remark}[theorem]{Remark}
\newtheorem{lemma}[theorem]{Lemma}
\newtheorem{proposition}[theorem]{Proposition}
\newtheorem{corollary}[theorem]{Corollary}
\newtheorem{conjecture}[theorem]{Conjecture}
\newtheorem*{mainthm*}{Main Theorem}
\newtheorem{mainthm}{Main Theorem}
\newcommand{\N}{\mathbb{N}}
\newcommand{\Z}{\mathbb{Z}}
\newcommand{\R}{\mathbb{R}}
\newcommand{\C}{\mathbb{C}}
\renewcommand{\H}{\mathbb{H}}
\newcommand{\M}{\mathcal{M}}
\newcommand{\F}{\mathcal{F}}
\newcommand{\CC}{\mathcal{C}}
\DeclareMathOperator{\id}{id}
\DeclareMathOperator{\rk}{rk}
\DeclareMathOperator{\im}{im}
\DeclareMathOperator{\PGL}{PGL}
\newcommand{\<}{\langle}
\renewcommand{\>}{\rangle}
\DeclareMathOperator{\Mov}{\textsc{Mov}}
\DeclareMathOperator{\Fix}{\textsc{Fix}}
\DeclareMathOperator{\Min}{\textsc{Min}}
\DeclareMathOperator{\Isom}{\textsc{Isom}}
\newcommand{\Ksparse}[1][W]{K_{#1}^{\textrm{sparse}}}	%
\newcommand{\Kshort}[1][W]{K_{#1}^{\textrm{short}}}		%
\newcommand{\Klong}[1][W]{K_{#1}^{\textrm{long}}}		%
\newenvironment{mymatrix}[1]{
    \left(
    
    \begin{array}{#1}
}
{
    \end{array}
    \right)
}
\newcommand{\hgline}[3]{
	\pgfmathsetmacro{\thetaone}{mod(#1,360)}
	\pgfmathsetmacro{\thetatwo}{mod(#2,360)}
	\pgfmathsetmacro{\theta}{(\thetaone+\thetatwo)/2}
	\pgfmathsetmacro{\phi}{abs(\thetaone-\thetatwo)/2}
	\pgfmathsetmacro{\close}{less(abs(\phi-90),0.0001)}
	\ifdim \close pt = 1pt
	\draw (\thetaone:1) -- (\thetatwo:1);
	\node at (\thetatwo:0.2) {#3};
	\else
	\pgfmathsetmacro{\R}{tan(\phi)}
	\ifdim \R pt < 0pt
	\pgfmathsetmacro{\distance}{sqrt(1+\R*\R)}
	\draw (\theta:-\distance) circle (\R);
	\node at (\theta:-\distance-\R) {#3};
	\else \ifdim \R pt > 0pt
	\pgfmathsetmacro{\distance}{sqrt(1+\R^2)}
	\draw (\theta:\distance) circle (\R);
	\node at (\theta:\distance-\R) {#3};
	\fi
	\fi
	\fi
}
\title[Dual structures on Coxeter and Artin groups of rank three]{
Dual structures on Coxeter and Artin groups \\ of rank three
}
\author[Emanuele Delucchi]{Emanuele Delucchi}
\address{\textnormal{Emanuele Delucchi: SUPSI-IDSIA, University of Applied Arts and Sciences of Southern Switzerland, Lugano, Switzerland.
}}
\author[Giovanni Paolini]{Giovanni Paolini}
\address{\textnormal{Giovanni Paolini (corresponding author): Department of Mathematics, University of Bologna, Italy.}}
\author[Mario Salvetti]{Mario Salvetti}
\address{\textnormal{Mario Salvetti: Department of Mathematics, University of Pisa, Italy.}}
\email{emanuele.delucchi@supsi.ch\textnormal{,} g.paolini@unibo.it\textnormal{,} salvetti@dm.unipi.it}
\begin{document}

\begin{abstract}
    We extend the theory of dual Coxeter and Artin groups to all rank-three Coxeter systems, beyond the previously studied spherical and affine cases.
    Using geometric, combinatorial, and topological techniques, we show that rank-three noncrossing partition posets are EL-shellable lattices and give rise to Garside groups isomorphic to the associated standard Artin groups.
    Within this framework, we prove the $K(\pi, 1)$ conjecture, the triviality of the center, and the solubility of the word problem for rank-three Artin groups.
    Some of our constructions apply to general Artin groups; we hope they will help develop complete solutions to the $K(\pi, 1)$ conjecture and other open problems in the area.
\end{abstract}

\maketitle

\section{Introduction}
\label{sec:introduction}

The famous $K(\pi, 1)$ conjecture, dating back to the 1960s and due to Arnol'd, Pham, and Thom, states that the orbit configuration space of Artin groups is an Eilenberg-Maclane space (or $K(\pi, 1)$ space). This conjecture was proved fifty years ago by Deligne for spherical Artin groups \cite{deligne1972immeubles}. Recently, the second and third authors proved this conjecture for the next important class of Artin groups, namely those of affine type \cite{paolini2021proof}.
Between these two results, Dehornoy, Paris, and others developed the theory of Garside structures \cite{bestvina1999,dehornoy1999gaussian,bessis2003dual,charney2004bestvina,dehornoy2015foundations}, generalizing properties of the standard presentation of spherical Artin groups.
Applying Garside theory to \emph{dual} presentations (arising from noncrossing partition posets) was a key ingredient to solving the most important open problems on affine Artin groups: not only the $K(\pi, 1)$ conjecture \cite{paolini2021proof}, but also the word problem and the triviality of the center \cite{mccammond2017artin}.
An outline of this ``dual approach'', inspired by \cite{paolini2021proof} and articulated in \cite{paolini2021dual}, is given in \Cref{sec:dual-approach}.

We begin the extension of the dual approach beyond the affine case by tackling Artin groups associated with hyperbolic Coxeter systems of rank $3$ (see \Cref{fig:hyperbolic-arrangements}).
While these Artin groups were already partially understood with different techniques \cite{hendriks1985hyperplane, charney1995k, chermak1998locally}, we are able to solve all important problems about them (including the ones previously solved) independently of prior work.
We also answer all questions posed in \cite{paolini2021dual} on the dual structure of rank-three Artin groups.
Our main results are summarized in \Cref{sec:contributions} below.

\begin{figure}
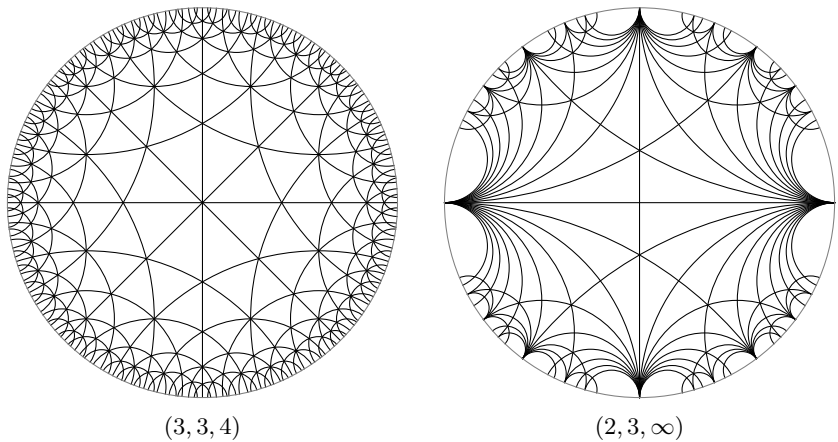

    \newcommand{\scale}{2.6}
    \begin{center}
        \begin{subfigure}[t]{.45\linewidth}
            \centering
            \begin{tikzpicture}[scale=\scale]
	            \clip (0,0) circle (1);
	            \input{lines334}
	            \draw[black!50, thick] (0,0) circle (1);
            \end{tikzpicture}
            \caption*{$(3, 3, 4)$}
        \end{subfigure}
        \begin{subfigure}[t]{.45\linewidth}
            \centering
            \begin{tikzpicture}[scale=\scale]
	            \clip (0,0) circle (1);
	            \input{lines23infty}
	            \draw[black!50, thick] (0,0) circle (1);
            \end{tikzpicture}
            \caption*{$(2, 3, \infty)$}
        \end{subfigure}
    \end{center}
    
    \caption{Examples of arrangements of rank-three hyperbolic Coxeter groups in the Poincaré model.
    Each picture is captioned with the labels $(m_1, m_2, m_3)$ of the corresponding Coxeter diagram.
    The hyperbolic plane is tiled by triangles with angles $\frac{\pi}{m_1}$, $\frac{\pi}{m_2}$, and $\frac{\pi}{m_3}$.
 	}
    \label{fig:hyperbolic-arrangements}
\end{figure}

\begin{figure}
    \makeatletter
    \define@key{z sphericalkeys}{radius}{\def\myradius{#1}}
    \define@key{z sphericalkeys}{theta}{\def\mytheta{#1}}
    \define@key{z sphericalkeys}{phi}{\def\myphi{#1}}
    \tikzdeclarecoordinatesystem{z spherical}{%
        \setkeys{z sphericalkeys}{#1}%
        \pgfpointxyz{\myradius*sin(\mytheta)*cos(\myphi)}{\myradius*sin(\mytheta)*sin(\myphi)}{\myradius*cos(\mytheta)}}
    \makeatother
    
    \begin{center}
    \begin{subfigure}[t]{.3\linewidth}
        \centering
        \begin{tikzpicture}[scale=1.9]
            \tdplotsetmaincoords{10}{20}
            \begin{scope}[tdplot_main_coords]
                \tdplotsetrotatedcoords{0}{90}{0}
                \begin{scope}[tdplot_rotated_coords]
                    \newcommand{\beginphi}{96}
                    \draw plot[variable=\x,domain=\beginphi:\beginphi+180,smooth,samples=60]  (z spherical cs:radius=1,theta=90,phi=\x);
                \end{scope}
                
                \tdplotsetrotatedcoords{90}{90}{0}
                \begin{scope}[tdplot_rotated_coords]
                    \newcommand{\beginphi}{95}
                    \draw plot[variable=\x,domain=\beginphi:\beginphi+180,smooth,samples=60]  (z spherical cs:radius=1,theta=90,phi=\x);
                \end{scope}
                
                \tdplotsetrotatedcoords{45}{54.7356103}{0}
                \begin{scope}[tdplot_rotated_coords]
                    \newcommand{\beginphi}{100}
                    \draw plot[variable=\x,domain=\beginphi:\beginphi+180,smooth,samples=60]  (z spherical cs:radius=1,theta=90,phi=\x);
                \end{scope}
                
                \tdplotsetrotatedcoords{-45}{54.7356103}{0}
                \begin{scope}[tdplot_rotated_coords]
                    \newcommand{\beginphi}{100}
                    \draw plot[variable=\x,domain=\beginphi:\beginphi+180,smooth,samples=60]  (z spherical cs:radius=1,theta=90,phi=\x);
                \end{scope}
                
                \tdplotsetrotatedcoords{45}{125.2643897}{0}
                \begin{scope}[tdplot_rotated_coords]
                    \newcommand{\beginphi}{100}
                    \draw plot[variable=\x,domain=\beginphi:\beginphi+180,smooth,samples=60]  (z spherical cs:radius=1,theta=90,phi=\x);
                \end{scope}
        
                \tdplotsetrotatedcoords{-45}{125.2643897}{0}
                \begin{scope}[tdplot_rotated_coords]
                    \newcommand{\beginphi}{95}
                    \draw plot[variable=\x,domain=\beginphi:\beginphi+180,smooth,samples=60]  (z spherical cs:radius=1,theta=90,phi=\x);
                \end{scope}
            \end{scope}
            \draw[black!50, thick] (0,0,0) circle (1);
        \end{tikzpicture}
        \caption*{$A_3$: $(2,3,3)$}
    \end{subfigure}
    \quad
    \begin{subfigure}[t]{.3\linewidth}
        \centering
        \begin{tikzpicture}[scale=1.9]
            \tdplotsetmaincoords{30}{30}
            \begin{scope}[tdplot_main_coords]
                \tdplotsetrotatedcoords{0}{90}{0}
                \begin{scope}[tdplot_rotated_coords]
                    \newcommand{\beginphi}{115}
                    \draw plot[variable=\x,domain=\beginphi:\beginphi+180,smooth,samples=60]  (z spherical cs:radius=1,theta=90,phi=\x);
                \end{scope}
    
                \tdplotsetrotatedcoords{90}{90}{0}
                \begin{scope}[tdplot_rotated_coords]
                    \newcommand{\beginphi}{100}
                    \draw plot[variable=\x,domain=\beginphi:\beginphi+180,smooth,samples=60]  (z spherical cs:radius=1,theta=90,phi=\x);
                \end{scope}
    
                \tdplotsetrotatedcoords{0}{0}{0}
                \begin{scope}[tdplot_rotated_coords]
                    \newcommand{\beginphi}{200}
                    \draw plot[variable=\x,domain=\beginphi:\beginphi+180,smooth,samples=60]  (z spherical cs:radius=1,theta=90,phi=\x);
                \end{scope}
    
                \tdplotsetrotatedcoords{45}{90}{0}
                \begin{scope}[tdplot_rotated_coords]
                    \newcommand{\beginphi}{117}
                    \draw plot[variable=\x,domain=\beginphi:\beginphi+180,smooth,samples=60]  (z spherical cs:radius=1,theta=90,phi=\x);
                \end{scope}
    
                \tdplotsetrotatedcoords{-45}{90}{0}
                \begin{scope}[tdplot_rotated_coords]
                    \newcommand{\beginphi}{93}
                    \draw plot[variable=\x,domain=\beginphi:\beginphi+180,smooth,samples=60]  (z spherical cs:radius=1,theta=90,phi=\x);
                \end{scope}
    
                \tdplotsetrotatedcoords{90}{45}{0}
                \begin{scope}[tdplot_rotated_coords]
                    \newcommand{\beginphi}{100}
                    \draw plot[variable=\x,domain=\beginphi:\beginphi+180,smooth,samples=60]  (z spherical cs:radius=1,theta=90,phi=\x);
                \end{scope}
    
                \tdplotsetrotatedcoords{-90}{45}{0}
                \begin{scope}[tdplot_rotated_coords]
                    \newcommand{\beginphi}{60}
                    \draw plot[variable=\x,domain=\beginphi:\beginphi+180,smooth,samples=60]  (z spherical cs:radius=1,theta=90,phi=\x);
                \end{scope}
    
                \tdplotsetrotatedcoords{0}{45}{0}
                \begin{scope}[tdplot_rotated_coords]
                    \newcommand{\beginphi}{130}
                    \draw plot[variable=\x,domain=\beginphi:\beginphi+180,smooth,samples=60]  (z spherical cs:radius=1,theta=90,phi=\x);
                \end{scope}
    
                \tdplotsetrotatedcoords{0}{135}{0}
                \begin{scope}[tdplot_rotated_coords]
                    \newcommand{\beginphi}{120}
                    \draw plot[variable=\x,domain=\beginphi:\beginphi+180,smooth,samples=60]  (z spherical cs:radius=1,theta=90,phi=\x);
                \end{scope}
            \end{scope}
            \draw[black!50, thick] (0,0,0) circle (1);
        \end{tikzpicture}
        \caption*{$B_3$: $(2,3,4)$}
    \end{subfigure}
    \quad
    \begin{subfigure}[t]{.3\linewidth}
        \centering
        \begin{tikzpicture}[scale=1.9]
    
            \tdplotsetmaincoords{32}{152}
            \begin{scope}[tdplot_main_coords]
                \tdplotsetrotatedcoords{0}{90}{0}
                \begin{scope}[tdplot_rotated_coords]
                    \newcommand{\beginphi}{60}
                    \draw plot[variable=\x,domain=\beginphi:\beginphi+180,smooth,samples=60]  (z spherical cs:radius=1,theta=90,phi=\x);
                \end{scope}
    
                \tdplotsetrotatedcoords{90}{90}{0}
                \begin{scope}[tdplot_rotated_coords]
                    \newcommand{\beginphi}{110}
                    \draw plot[variable=\x,domain=\beginphi:\beginphi+180,smooth,samples=60]  (z spherical cs:radius=1,theta=90,phi=\x);
                \end{scope}
    
                \tdplotsetrotatedcoords{0}{0}{0}
                \begin{scope}[tdplot_rotated_coords]
                    \newcommand{\beginphi}{-30}
                    \draw plot[variable=\x,domain=\beginphi:\beginphi+180,smooth,samples=60]  (z spherical cs:radius=1,theta=90,phi=\x);
                \end{scope}
                
                \tdplotsetrotatedcoords{31.717474411460998}{72}{0}
                \begin{scope}[tdplot_rotated_coords]
                    \newcommand{\beginphi}{70}
                    \draw plot[variable=\x,domain=\beginphi:\beginphi+180,smooth,samples=60]  (z spherical cs:radius=1,theta=90,phi=\x);
                \end{scope}
    
                \tdplotsetrotatedcoords{148.282525588539}{72}{0}
                \begin{scope}[tdplot_rotated_coords]
                    \newcommand{\beginphi}{120}
                    \draw plot[variable=\x,domain=\beginphi:\beginphi+180,smooth,samples=60]  (z spherical cs:radius=1,theta=90,phi=\x);
                \end{scope}
    
                \tdplotsetrotatedcoords{-31.717474411460998}{72}{0}
                \begin{scope}[tdplot_rotated_coords]
                    \newcommand{\beginphi}{60}
                    \draw plot[variable=\x,domain=\beginphi:\beginphi+180,smooth,samples=60]  (z spherical cs:radius=1,theta=90,phi=\x);
                \end{scope}
    
                \tdplotsetrotatedcoords{-148.282525588539}{72}{0}
                \begin{scope}[tdplot_rotated_coords]
                    \newcommand{\beginphi}{110}
                    \draw plot[variable=\x,domain=\beginphi:\beginphi+180,smooth,samples=60]  (z spherical cs:radius=1,theta=90,phi=\x);
                \end{scope}
    
                \tdplotsetrotatedcoords{69.09484255211069}{60}{0}
                \begin{scope}[tdplot_rotated_coords]
                    \newcommand{\beginphi}{90}
                    \draw plot[variable=\x,domain=\beginphi:\beginphi+180,smooth,samples=60]  (z spherical cs:radius=1,theta=90,phi=\x);
                \end{scope}
    
                \tdplotsetrotatedcoords{110.90515744788931}{60}{0}
                \begin{scope}[tdplot_rotated_coords]
                    \newcommand{\beginphi}{130}
                    \draw plot[variable=\x,domain=\beginphi:\beginphi+180,smooth,samples=60]  (z spherical cs:radius=1,theta=90,phi=\x);
                \end{scope}
    
                \tdplotsetrotatedcoords{-69.09484255211069}{60}{0}
                \begin{scope}[tdplot_rotated_coords]
                    \newcommand{\beginphi}{65}
                    \draw plot[variable=\x,domain=\beginphi:\beginphi+180,smooth,samples=60]  (z spherical cs:radius=1,theta=90,phi=\x);
                \end{scope}
    
                \tdplotsetrotatedcoords{-110.90515744788931}{60}{0}
                \begin{scope}[tdplot_rotated_coords]
                    \newcommand{\beginphi}{80}
                    \draw plot[variable=\x,domain=\beginphi:\beginphi+180,smooth,samples=60]  (z spherical cs:radius=1,theta=90,phi=\x);
                \end{scope}
    
                \tdplotsetrotatedcoords{31.71747441146101}{36}{0}
                \begin{scope}[tdplot_rotated_coords]
                    \newcommand{\beginphi}{35}
                    \draw plot[variable=\x,domain=\beginphi:\beginphi+180,smooth,samples=60]  (z spherical cs:radius=1,theta=90,phi=\x);
                \end{scope}
    
                \tdplotsetrotatedcoords{148.282525588539}{36}{0}
                \begin{scope}[tdplot_rotated_coords]
                    \newcommand{\beginphi}{140}
                    \draw plot[variable=\x,domain=\beginphi:\beginphi+180,smooth,samples=60]  (z spherical cs:radius=1,theta=90,phi=\x);
                \end{scope}
    
                \tdplotsetrotatedcoords{-31.71747441146101}{36}{0}
                \begin{scope}[tdplot_rotated_coords]
                    \newcommand{\beginphi}{40}
                    \draw plot[variable=\x,domain=\beginphi:\beginphi+180,smooth,samples=60]  (z spherical cs:radius=1,theta=90,phi=\x);
                \end{scope}
    
                \tdplotsetrotatedcoords{-148.282525588539}{36}{0}
                \begin{scope}[tdplot_rotated_coords]
                    \newcommand{\beginphi}{110}
                    \draw plot[variable=\x,domain=\beginphi:\beginphi+180,smooth,samples=60]  (z spherical cs:radius=1,theta=90,phi=\x);
                \end{scope}
    
            \end{scope}
            \draw[black!50, thick] (0,0,0) circle (1);
        \end{tikzpicture}
        \caption*{$H_3$: $(2, 3, 5)$}
    \end{subfigure}
    \end{center}
    
    \bigskip
    
    \begin{center}
    \newcommand{\scale}{0.8}
    \begin{subfigure}[t]{.3\linewidth}
        \centering
        \newcommand*\rows{10}
        \begin{tikzpicture}[scale=\scale,
          extended line/.style={shorten >=-#1,shorten <=-#1},
          extended line/.default=35cm]
            \begin{scope}
                \clip (-2.2, -2.2) rectangle (2.2, 2.2);
                \foreach \row in {-\rows, ...,\rows} {
                    \draw [extended line] ($\row*(0.5, {0.5*sqrt(3)})$) -- ($(\rows,0)+\row*(-0.5, {0.5*sqrt(3)})$);
                    \draw [extended line] ($\row*(1, 0)$) -- ($(\rows/2,{\rows/2*sqrt(3)})+\row*(0.5,{-0.5*sqrt(3)})$);
                    \draw [extended line] ($\row*(1, 0)$) -- ($(\rows/2,{-\rows/2*sqrt(3)})+\row*(0.5,{0.5*sqrt(3)})$);
                }
            \end{scope}
        \end{tikzpicture}
        \caption*{$\tilde A_2$: $(3,3,3)$}
    \end{subfigure}
    \quad
    \begin{subfigure}[t]{.3\linewidth}
        \centering
        \newcommand*\rows{10}
        \begin{tikzpicture}[scale=\scale,
          extended line/.style={shorten >=-#1,shorten <=-#1},
          extended line/.default=50cm]
            \begin{scope}
                \clip (-2.2, -2.2) rectangle (2.2, 2.2);
                \foreach \row in {-\rows, ...,\rows} {
                    \draw [extended line] ($\row*(1, 1)$) -- ($(\rows,0)+\row*(1, 1)$);
                    \draw [extended line] ($\row*(1, 1)$) -- ($(0,\rows)+\row*(1, 1)$);
                    \draw [extended line] ($\row*(1, 0)$) -- ($(1, 1)+\row*(1, 0)$);
                    \draw [extended line] ($\row*(1, 0)$) -- ($(1, -1)+\row*(1, 0)$);
                }
            \end{scope}
        \end{tikzpicture}
        \caption*{$\tilde C_2$: $(2,4,4)$}
    \end{subfigure}
    \quad
    \begin{subfigure}[t]{.3\linewidth}
        \centering
        \newcommand*\rows{10}
        \begin{tikzpicture}[scale=\scale,
          extended line/.style={shorten >=-#1,shorten <=-#1},
          extended line/.default=35cm]
            \begin{scope}
                \clip (-2.2, -2.2) rectangle (2.2, 2.2);
                \foreach \row in {-\rows, ...,\rows} {
                    \draw [extended line] ($\row*(0, 1)$) -- ($(1, 0)+\row*(0, 1)$);
                    \draw [extended line] ($\row*({sqrt(3)},0)$) -- ($(0, 1)+\row*({sqrt(3)},0)$);
                    \draw [extended line] ($\row*(0, 2)$) -- ($({sqrt(3)}, 3)+\row*(0, 2)$);
                    \draw [extended line] ($\row*(0, 2)$) -- ($({sqrt(3)}, 1)+\row*(0, 2)$);
                    \draw [extended line] ($\row*(0, 2)$) -- ($({sqrt(3)}, -3)+\row*(0, 2)$);
                    \draw [extended line] ($\row*(0, 2)$) -- ($({sqrt(3)}, -1)+\row*(0, 2)$);
                }
            \end{scope}
        \end{tikzpicture}
        \caption*{$\tilde G_2$: $(2,3,6)$}
    \end{subfigure}
    \end{center}
    
    \caption{Arrangements of irreducible spherical and affine Coxeter groups of rank three.
    Each picture is captioned with the standard name (following the classification of spherical and affine Coxeter groups) and the labels $(m_1, m_2, m_3)$ of the corresponding Coxeter diagram.
    In each case, the sphere or plane is tiled by triangles with angles $\frac{\pi}{m_1}, \frac{\pi}{m_2}, \frac{\pi}{m_3}$.
    The Coxeter groups of the spherical cases (top three) are the symmetry groups of the five platonic solids (tetrahedron, cube/octahedron, and dodecahedron/icosahedron, respectively).}
    \label{fig:spherical-affine-arrangements}
\end{figure}

\subsection{The dual approach}
\label{sec:dual-approach}
Let $W$ be a finitely generated Coxeter group and $G_W$ the associated Artin group.
In the affine case, the proof of the $K(\pi, 1)$ conjecture given in \cite{paolini2021proof} roughly goes as follows.
First, the so-called \emph{interval complex} $K_W$ (see below) is shown to be a $K(\pi, 1)$ space.
The next step is to identify a finite subcomplex $X'_W\subseteq K_W$ and to prove that it is homotopy equivalent to the orbit configuration space of $G_W$.
Finally, using combinatorial methods (discrete Morse theory and lexicographic shellability), the complex $K_W$ is shown to deformation retract onto $X'_W$.

The noncrossing partition poset $[1,w]$ is the interval between $1$ and $w$ in the (right) Cayley graph of $W$, using the set $R$ of all reflections as generators.
The interval complex $K_W$ is a quotient of the order complex of $[1,w]$; it is a $\Delta$-complex whose $d$-simplices correspond to sequences $\sigma=[x_1|\dotsb|x_d]$ which are part of a reduced factorization of $w$.
One also introduces a \emph{dual} Artin group $W_w$ associated with $W$ and the chosen Coxeter element $w$, defined as the fundamental group of $K_W$.
Combinatorial properties of $[1,w]$ reflect topological properties of $K_W$: most significantly, if $[1,w]$ is a lattice then $K_W$ is a $K(\pi,1)$.

In addition to the $K(\pi, 1)$ conjecture, several interesting questions can be asked in general and have been answered in the affine case.
For example: is the dual Artin group $W_w$ always naturally isomorphic to the standard one? Is it a Garside group? Is the word problem solvable?
And, from the combinatorial point of view: is the noncrossing partition poset $[1,w]$ a lattice? is it EL-shellable?
We refer to \cite{paolini2021dual} for a more detailed discussion on the dual approach and the several questions related to it.

\subsection{Contributions}
\label{sec:contributions}
By studying the geometry and combinatorics of the noncrossing partition poset $[1,w]$, we answer all previous questions (and more) when $W$ is a Coxeter group of rank $3$, i.e., generated by $3$ reflections.
These groups are also known as \emph{triangle groups}.
They are all of hyperbolic type in the sense of \cite{humphreys1992reflection} (\Cref{fig:hyperbolic-arrangements}) except for a finite list of spherical or affine groups that are already well understood (\Cref{fig:spherical-affine-arrangements}).

Going from affine to hyperbolic groups introduces new challenges.
For example, a geometric characterization of the elements of $[1,w]$ seems difficult to achieve (in the affine case, such a characterization was obtained by McCammond in \cite{mccammond2015dual}).
The rank-three case is also peculiar because $[1,w]$ turns out to be a lattice, thus giving rise to a Garside structure on the corresponding dual Artin group. The lattice property does not hold in general, even in the affine case \cite[Theorem A]{mccammond2015dual}.
Aside from the lattice property, we expect many of the results and techniques we develop to extend to hyperbolic Coxeter groups of arbitrary rank.
The following are our main results.

\begin{mainthm}[Dual structure]
    Let $(W, S)$ be a Coxeter system of rank $3$ and $G_W$ the associated Artin group.
    Let $w \in W$ be any Coxeter element, and consider the associated noncrossing partition poset $[1,w]$.
    \begin{enumerate}[(i)]
        \item $[1,w]$ is a lattice.
        \item $[1,w]$ is EL-shellable.
        \item Every element $u \in [1,w]$ is a Coxeter element for the subgroup generated by the reflections $\leq u$.
        \item The dual Artin group associated with $[1,w]$ is naturally isomorphic to $G_W$.
    \end{enumerate}
\end{mainthm}

\begin{mainthm}[Artin groups]
    Let $G_W$ be an Artin group of rank $3$.
    \begin{enumerate}[(i)]
        \item $G_W$ is a Garside group.
        \item The $K(\pi, 1)$ conjecture holds for $G_W$.
        \item The word problem for $G_W$ is solvable.
        \item The center of $G_W$ is trivial unless $W$ is finite.
    \end{enumerate}
    \label{thm:main2}
\end{mainthm}

Some of the claims in \Cref{thm:main2} have been proved elsewhere by completely different methods, but our setup allows us to obtain particularly succinct proofs for all of them (see \Cref{sec:consequences}).
Specifically, the $K(\pi, 1)$ conjecture \cite{hendriks1985hyperplane, charney1995k} and the word problem \cite{chermak1998locally} were already known for rank-three Artin groups.
During the preparation of this paper, a preprint appeared showing that the $K(\pi, 1)$ conjecture implies the triviality of the center \cite{jankiewicz2022conjecture} for general Artin groups.
All other results are completely novel.

In order to prove the theorems above, we further develop the dual approach by providing new constructions that hold for general Artin groups. In particular, we introduce new subcomplexes of the interval complex $K_W$ and propose a general strategy to deformation retract $K_W$ onto $X_W'$ (among other things, this would imply the isomorphism between standard and dual Artin groups).

\subsection{Structure}
In \Cref{sec:preliminaries}, we give all definitions and constructions needed later: classical models of the hyperbolic plane; some standard tools of combinatorial topology; interval groups and their relation with Garside structures; Coxeter groups, (standard and dual) Artin groups, their orbit configuration space and the statement of the $K(\pi,1)$ conjecture; and the definition of the subcomplex $X'_W$, homotopy equivalent to the orbit configuration space.

In \Cref{sec:coxeter-elements}, we introduce Coxeter elements of rank-three hyperbolic groups and their Coxeter axes. The main result there is that, if $w$ is a Coxeter element,  every element $u$ in the interval $[1,w]$ is a Coxeter element for the subgroup of $W$ generated by all reflections below $u$ (\Cref{thm:coxeter-elements-below-w}).

In \Cref{sec:poset}, we prove the lattice property for $[1,w]$, which implies that the corresponding dual Artin group is a Garside group.
Then, we introduce the \emph{axial ordering} on the set of reflections in $[1,w]$. The main result (\Cref{thm:shellability}) is that this ordering induces an EL-labeling of $[1,w]$.

In \Cref{sec:interval-complex}, we introduce a sequence of subcomplexes of the interval complex $K_W$, for an arbitrary Coxeter group $W$.
These include the already mentioned subcomplex $X_W'$ (homotopy equivalent to the orbit configuration space) as well as new intermediate subcomplexes between $X_W'$ and $K_W$.
We hypothesize that a deformation retraction $K_W \searrow X_W'$ can be constructed more easily
by collapsing each intermediate subcomplex to the next one, as outlined in \Cref{treasuremap}.

In \Cref{sec:dmt-rank-three}, we implement the general program of \Cref{sec:interval-complex} in the rank-three case, proving that $K_W$ deformation retracts onto $X_W'$.
In \Cref{sec:consequences}, we deduce the following consequences: rank-three dual and standard Artin groups are isomorphic (\Cref{thm:standard-dual-isomorphism}), they satisfy the $K(\pi,1)$ conjecture (\Cref{thm:conjecture}), they are Garside groups (\Cref{thm:standard-garside}), they have a solvable word problem (\Cref{thm:word-problem}), and the non-spherical ones have a trivial center (\Cref{thm:center}).

\subsection*{Acknowledgements}
The authors thank the referees for their useful suggestions. Paolini acknowledges support from PRIN 2022A7L229 \textit{Algebraic and topological combinatorics} and INdAM's GNSAGA group. Salvetti acknowledges support from PRIN 2022S8SSW2 \textit{Algebraic and geometric aspects of Lie theory} and the MIUR Excellence Department Project awarded to the Department of Mathematics, University of Pisa, CUP I57G22000700001.

\section{Preliminaries}
\label{sec:preliminaries}

\subsection{The hyperbolic plane}
\label{sec:hyperbolic-plane}

Denote by $\H^2$ the abstract hyperbolic plane and by $\partial \H^2$ the space at infinity.
We refer to points in $\partial \H^2$ as \emph{points at infinity} or \emph{ideal points}.
Throughout this paper, we are going to use different models for the hyperbolic plane: the hyperboloid model, the half-plane model, the Poincaré model, and the Klein model.
We refer to \cite{cannon1997hyperbolic,benedetti1992lectures} for their definitions and the relations between them.
The hyperboloid and Klein models are directly linked to the geometry of hyperbolic Coxeter groups acting on the Tits cone (see \Cref{sec:coxeter-groups}).
However, we often find the half-plane model well-suited for explicit computations with isometries (as we see already in \Cref{sec:half-plane-model}).
Unless otherwise stated, the figures are drawn in the Poincaré model; both the half-plane and the Poincaré models are conformal (angles are preserved).

\subsubsection{Isometries of the hyperbolic plane}
Every isometry of the hyperbolic plane $\H^2$ can be written as a product of $3$ or fewer reflections %
and the \emph{reflection length} of an isometry is the minimal number of reflections needed.
Isometries can be classified as follows:
\begin{itemize}
    \item the identity (reflection length $0$),
    \item reflections with respect to a line (reflection length $1$),
    \item rotations around a point in $\H^2 \cup \partial \H^2$ (reflection length $2$),
    \item translations along a line (reflection length $2$) called the \emph{axis} of the translation (or the \emph{translation axis}),
    \item glide reflections, i.e., products of a reflection and a translation along the same line (reflection length $3$) called the \emph{axis} of the glide reflection.
\end{itemize}

The identity, reflections, and rotations around points in $\H^2$ are \emph{elliptic} isometries (they fix at least one point in $\H^2$). Rotations around points in $\partial \H^2$ are \emph{parabolic} isometries (they fix exactly one point in $\partial \H^2$ and no point in $\H^2$).
Translations and glide reflections are \emph{hyperbolic} isometries (they fix exactly two points in $\partial \H^2$, namely the endpoints of their axes, and fix no point in $\H^2$).

For an elliptic or parabolic isometry $u$, denote by $\Fix(u) \subseteq \H^2 \cup \partial \H^2$ the set of its fixed points (also called its \emph{fixed set}).
If $u$ is a hyperbolic isometry, then its axis is the set of points $x \in \H^2$ that minimize the distance $d(x, u(x))$.
For this reason, the axis of $u$ is often also called its \emph{min-set} and we denote it by $\Min(u)$.
The distance $d(x, u(x))$ for any $x \in \Min(u)$ is called the \emph{translation length} of $u$.
The axis of a hyperbolic isometry is naturally oriented according to the translation direction.

\subsubsection{Isometries in the hyperboloid model}
\label{iso-hyperboloid}
Denote by
\[ L = \{(x_1, x_2, x_3) \mid x_1^2 + x_2^2 - x_3^2 = -1 \text{ and } x_3 > 0 \} \subseteq \R^3 \]
the hyperboloid model.
Isometries in the hyperboloid model are restrictions of isometries of $\R^3$ with respect to the quadratic form $Q(x_1, x_2, x_3) = x_1^2 + x_2^2 - x_3^2$.
Conversely, every isometry of the quadratic space $(\R^3, Q)$ that preserves $L$ (as a set) restricts to an isometry of $L$.
In particular, reflections of $L$ are restrictions of reflections of $(\R^3, Q)$ with respect to linear ($2$-dimensional) planes that intersect $L$.

Given an isometry in the hyperboloid model (or an isometry of the abstract hyperbolic plane $\H^2$), we refer to its \emph{spectral radius} as the spectral radius of its extension to an isometry of $(\R^3, Q)$.
The spectral radius of an elliptic or parabolic isometry is always $1$, whereas the spectral radius of a hyperbolic isometry is $e^\mu$ where $\mu$ is the translation length of $u$ \cite[Corollary 3.5]{mcmullen2002coxeter}.

The \emph{moved set} of an isometry $u$ of $(\R^3, Q)$ is defined as $\Mov(u) = \im(u - \id) \subseteq \R^3$.
We also define the moved set of an isometry of the hyperbolic plane (viewed as an isometry in the hyperboloid model) as the moved set of its extension to an isometry of $(\R^3, Q)$.
The reflection length of an isometry of $\H^2$ is equal to the dimension of its moved set \cite[Theorem 5.2]{mccammond2021factoring}.

\subsubsection{Isometries in the half-plane model}
\label{sec:half-plane-model}
It is convenient to write the half-plane model as $H = \{ z\in \C \mid \text{Im}(z) > 0 \}$, so that the group of isometries is given by $\PGL(2, \R)$ \cite[I.6.14]{bridson2013metric}. An element of $\PGL(2, \R)$ acts on $H$ as follows:
\[
\begin{mymatrix}{cc}
a & b \\ c & d
\end{mymatrix} (z) =
\begin{cases}
\displaystyle\frac{az + b}{cz + d} & \text{if the determinant is positive} \\[0.5cm]
\displaystyle\frac{a \bar z + b}{c \bar z + d} & \text{if the determinant is negative}.
\end{cases}
\]
If we denote by $i\R_+$ the positive imaginary line in $H$, the matrices
\begin{equation}
    \begin{mymatrix}{cc}
    1 & 0\\
    0 & -1
    \end{mymatrix},
    \quad\quad
    \begin{mymatrix}{cc}
    \lambda & 0\\
    0 & 1
    \end{mymatrix} \quad 
    \text{for $\lambda > 1$}
    \label{eq:reflection-translation}
\end{equation}
correspond to the reflection with respect to $i\R_+$ and to the translations in the positive direction of the axis $i\R_+$, respectively.
Every reflection and every translation in $H$ is conjugate in $\PGL(2,\R)$ to one of the matrices above. In particular, if we consider the line $l$ in $H$ 
that is represented by a Euclidean semicircle centered in $\alpha\in \R$ and with radius $\rho$, 
the reflection with respect to $l$ is given by the matrix
\begin{equation}
    \begin{mymatrix}{cc}
    \alpha & \rho^2-\alpha^2 \\
    1 & -\alpha
    \end{mymatrix}
    \label{eq:reflection}
\end{equation}
(this can be computed by conjugating the reflection with respect to $i\R_+$ with an isometry that sends $i\R_+$ to $l$).
By multiplying the two matrices in \eqref{eq:reflection-translation}, we see that every glide reflection is conjugate to
\begin{equation}
    \begin{mymatrix}{cc}
    \lambda & 0\\
    0 & -1
    \end{mymatrix}
    \label{eq:glide-reflection}
\end{equation}
for some $\lambda > 1$.
If $u$ is a translation as in \eqref{eq:reflection-translation} or a glide reflection as in \eqref{eq:glide-reflection}, then its translation length is $\log \lambda$ and its spectral radius is $\lambda$.
More generally, the spectral radius of an isometry given as a matrix $M \in \PGL(2, \R)$ is equal to the absolute value of the ratio between the higher and the lower eigenvalue of $M$.

For later reference, we now prove a general lemma about the composition of glide reflections and reflections.

\begin{lemma}
	Let $w$ be a glide reflection and let $r$ be a reflection whose fixed line does not meet the axis of $w$ (not even at infinity).
	Then $wr$ is a translation and its (oriented) translation axis meets the (oriented) axis of $w$ with an angle less than $\frac{\pi}{2}$.
	In addition, the (oriented) translation axis of $wr$ intersects $\Fix(r)$ before the axis of $w$.
	The same applies to $rw$, except that the (oriented) translation axis of $rw$ intersects $\Fix(r)$ after the axis of $w$.
	\label{lemma:translations}
\end{lemma}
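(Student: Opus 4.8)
The plan is to work in the half-plane model of Section~\ref{sec:half-plane-model} and to normalize everything. By \eqref{eq:glide-reflection} we may conjugate so that $w$ is the matrix appearing there, with $\lambda>1$; its axis is then the positive imaginary line $i\R_+$, oriented from $0$ towards $\infty$ (indeed $w(iy)=i\lambda y$). A geodesic disjoint from $i\R_+$ even at infinity must be a Euclidean semicircle centered at some $\alpha\in\R$ of radius $\rho$ with $|\alpha|>\rho>0$; since the reflection in $i\R_+$ commutes with $w$, we may conjugate once more so that $\alpha>\rho>0$, and then $r$ is the matrix in \eqref{eq:reflection}. All the assertions to be proved are invariant under conjugation by an isometry, so this normalization costs nothing.

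I would then simply multiply out $wr$ and $rw$. Both have determinant $\lambda\rho^2>0$ and trace $\alpha(\lambda+1)$, hence are orientation preserving; moreover $\frac{(\operatorname{tr})^2}{\det}=\frac{\alpha^2(\lambda+1)^2}{\lambda\rho^2}>4$, because $\alpha>\rho$ and $(\lambda+1)^2>4\lambda$, so each has two distinct real eigenvalues and spectral radius $>1$. Thus $wr$ and $rw$ are hyperbolic, and being orientation preserving they are translations. Solving $cz^2+(d-a)z-b=0$ for the two boundary fixed points of $wr$, one finds that their product equals $\lambda(\rho^2-\alpha^2)<0$; so one fixed point $z_+$ is positive and the other $z_-$ is negative, and the translation axis of $wr$---the semicircle through $z_\pm$---crosses $i\R_+$ in exactly one point $Q$ of $\H^2$ (likewise for $rw$). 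A one-line eigenvector computation identifies the attracting fixed point of $wr$ as $z_-$ and that of $rw$ as its positive fixed point, which pins down the orientation of each axis.

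For the angle statement, the center of the axis-semicircle of $wr$ is $\tfrac12(z_-+z_+)=-\tfrac12\alpha(\lambda-1)<0$; consequently, at $Q$ the unit tangent to this axis in the direction of translation (from $z_+$ toward $z_-$) has strictly positive inner product with the upward unit tangent to $i\R_+$, so the two oriented axes meet at an angle $<\tfrac\pi2$. For $rw$ the center $\tfrac{\alpha(\lambda-1)}{2\lambda}$ is positive, but the axis runs from the negative to the positive fixed point, and the same sign bookkeeping again gives an angle $<\tfrac\pi2$. For the last clause I would first verify that the axis of $wr$ actually meets $\Fix(r)$: the two semicircles cross precisely when $|z_+-\alpha|<\rho$, which reduces, after expanding the discriminant, to $\alpha>\rho$; the crossing point $Q'$ then has real part in $(\alpha-\rho,z_+)$, in particular positive. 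Since the real part decreases monotonically as one runs along the axis of $wr$ from $z_+$ toward $z_-$, and $\operatorname{Re}Q'>0=\operatorname{Re}Q$, the axis of $wr$ meets $\Fix(r)$ before it meets $i\R_+$. The analogous computation for $rw$---where the real part now increases from negative to positive along the oriented axis---shows that $\Fix(r)$ is met after $i\R_+$.

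Everything above is elementary calculation; the two points that need care are keeping the orientations of the three lines consistent (which fixed point is attracting, and hence the sign appearing in the inner-product computation), and confirming that the translation axis of $wr$ genuinely crosses $\Fix(r)$. The hypothesis that $\Fix(r)$ misses the axis of $w$ even at infinity---equivalently $\alpha>\rho$---is used both to guarantee that $wr$ and $rw$ are translations (if $\Fix(r)$ crossed the axis of $w$ they could be rotations) and to obtain this last crossing; I expect this verification, rather than the classification of $wr$, to be the only genuinely delicate step.
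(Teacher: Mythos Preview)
Your argument is correct and follows essentially the same route as the paper's proof: normalize in the half-plane model so that $w$ is \eqref{eq:glide-reflection} and $r$ is \eqref{eq:reflection}, compute the fixed-point equation $z^2+(\lambda-1)\alpha z+\lambda(\rho^2-\alpha^2)=0$, and read off all claims from the signs of the product and sum of the roots. The only cosmetic differences are that you further normalize to $\alpha>0$ (which the paper does not), you detect hyperbolicity via $(\operatorname{tr})^2/\det>4$ rather than bounding the discriminant directly, and you verify the crossing with $\Fix(r)$ by checking $\alpha-\rho<z_+<\alpha+\rho$, whereas the paper computes the single product $p(\alpha+\rho)\,p(\alpha-\rho)=(\rho^2-\alpha^2)(\lambda+1)^2\rho^2<0$ to see that exactly one root lies in $(\alpha-\rho,\alpha+\rho)$.
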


\begin{proof}
Assume without loss of generality that $w$ is of the form \eqref{eq:glide-reflection} and write $r$ in the form \eqref{eq:reflection}.
Since $\Fix(r)$ does not intersect the imaginary axis, we have $0<\rho < \vert\alpha\vert$.

The matrix associated with the composition $wr$ is
\[
\begin{mymatrix}{cc}
	\lambda\alpha & \lambda(\rho^2 - \alpha^2) \\
	-1 & \alpha
\end{mymatrix}.
\]
Every point $z \in \Fix(wr)$ satisfies $z^2 + (\lambda-1)\alpha z + \lambda(\rho^2 - \alpha^2) = 0$. This equation has two distinct real roots $z_1,z_2$ because $(\lambda+1) |\alpha| > 2 \sqrt{\lambda} \rho$ (recall that $\vert \alpha \vert >\rho$, and $\lambda+1 > 2 \sqrt{\lambda}$ for all $\lambda> 1$). Thus $wr$ is a translation whose axis is represented by a Euclidean circle $\gamma$ that meets the real line at $z_1$ and $z_2$. Since $z_1z_2<0$, we have that $\gamma$ meets the (upwardly oriented) imaginary axis.  After relabeling, we can assume that the translation axis is oriented from $z_1$ to $z_2$.
The situation is depicted in \Cref{fig:lemma-translations}.

Now, $(z_1 + z_2) / 2$ (the center of $\gamma$) has the same sign as $-\alpha$, which has the same sign as $\lambda(\rho^2 - \alpha^2) / \alpha$ (the image of $0$ under $wr$).
This has two consequences. First, $0$ is between $z_1$ and the center of $\gamma$. Thus, the (oriented) axis of $w$ and the (oriented) axis of $wr$ intersect at an angle less than $\frac{\pi}{2}$. Second, the sign of $z_1$ is the same as the sign of $\alpha$. Thus, if the axis of $wr$ intersects $\Fix(r)$, it does so before intersecting the axis of $w$.
It remains to show that $\gamma$ intersects $\Fix(r)$. For this, we show that exactly one of $z_1, z_2$ is between $\alpha-\rho$ and $\alpha + \rho$:
\[ (z_1 - (\alpha + \rho))(z_1 - (\alpha-\rho))(z_2 - (\alpha + \rho))(z_2 - (\alpha-\rho)) = (\rho^2 - \alpha^2)(\lambda+1)^2\rho^2 < 0. \]
The claim about $rw$ follows with an analogous computation, the difference being that $z_1+z_2$ has the same sign as $\alpha$.
\end{proof}

\begin{figure}
    \centering
    \begin{tikzpicture}
        \draw (-5, 0) -- (5, 0);
        
        \draw[->] (0, 0) -- (0, 5);
        
        \begin{scope}
            \clip (-5, 0) rectangle (5, 5);
            \draw (2.5, 0) circle (1.5);
        \end{scope}
        
        \begin{scope}
            \clip (-5, 0) rectangle (5, 5);
            \draw[
            decoration={markings, mark=at position 0.5 with {\arrow{>}}},
            postaction={decorate}
            ] (-1.25, 0) circle (3.09233);
        \end{scope}
        
        \draw[dashed] (2.5, 0) -- ($ (2.5, 0) + (1.299038106, 0.75) $);
        
        \node at (0, -0.25) {$0$};
        \node at (2.5, -0.25) {$\alpha$};
        \node at (3, 0.5) {$\rho$};
        \node at (-4.34233, -0.25) {$z_2$};
        \node at (1.84233, -0.25) {$z_1$};

        \node at (0.8, 4.9) {$\Min(w)$};
        \node at (4, 1.45) {$\Fix(r)$};
        \node at (-3, 3.3) {$\gamma = \Min(wr)$};
    \end{tikzpicture}
    \caption{Proof of \Cref{lemma:translations} (in the half-plane model).}
    \label{fig:lemma-translations}
\end{figure}
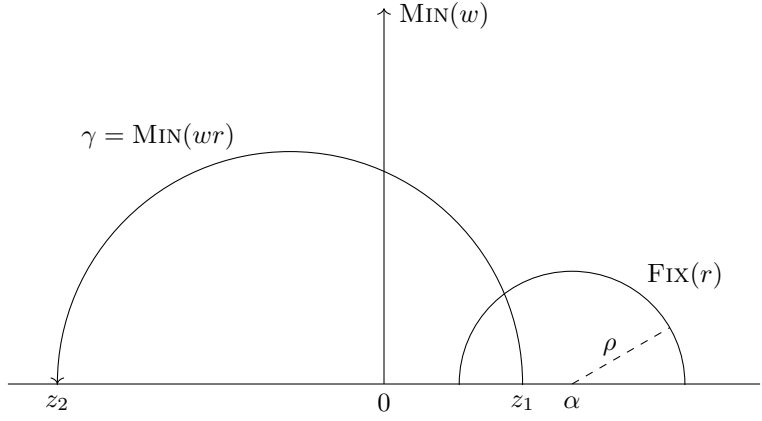

\begin{remark}
    In the setup of \Cref{lemma:translations}, suppose that $\Fix(r)$ meets the axis of $w$ in $0$ (a point at infinity). A similar analysis as in the previous proof (with  $|\alpha| = \rho$) yields that $wr$ is again a translation, but the translation axis of $wr$ meets the axis of $w$ in $0$ (with an angle of $0$).
    Both axes are oriented outwards from $0$.
\end{remark}

\subsection{Combinatorial topology}

A recurring theme of this paper is the relationship between the combinatorial properties of partially ordered sets (in turn related to groups via Garside theory, see \Cref{sec:garside-structures}) and the topological features of certain canonically associated spaces.

\subsubsection{Generalities on posets} 
We review some basic terminology, mainly following \cite{stanley2012enumerative}.
Let $(P,\leq)$ be a partially ordered set (a {\em poset}). We call $P$ {\em bounded} if it has a unique maximal element and a unique minimal element. Any two elements $p,q\in P$ define an {\em interval} $[p,q]:=\{u\in P \mid p\leq u \leq q\}$. Write $p\lessdot q$ if $[p,q]$ has cardinality $2$; in this case we say that $[p,q]$ is a poset cover. The {\em Hasse diagram} of $P$ is the graph whose vertices are the elements of $P$ and whose edges $E(P)$ are all poset covers in $P$.

A {\em chain} in $P$ is any totally ordered subset, i.e., any $C\subseteq P$ such that $c\leq c'$ or $c\geq c'$ for all $c,c'\in C$. The length of a chain is defined to be one less than its cardinality.
The length of the poset $P$ is the supremum of the lengths of all chains in $P$. The poset $P$ is {\em chain-finite} if every chain in $P$ has a finite length.
A chain-finite poset $P$ is called {\em graded} if any two maximal chains within the same interval have equal length. Equivalently, $P$ admits a {\em rank function} $\rk:P\to \mathbb Z$ such that $p\lessdot q$ implies $\rk(q)-\rk(p)=1$.
 
A poset $P$ is a {\em lattice} if every pair of elements $p_1,p_2\in P$ has a unique maximal lower bound and a unique minimal upper bound. 
Note that every chain-finite lattice is bounded.

The {\em order complex} of a partially ordered set $P$ is the abstract simplicial complex of all chains in $P$.  Topological properties of posets are defined by referring to the topology of its order complex. (Recall that every abstract simplicial complex has a geometric realization as a geometric simplicial complex that is unique up to homeomorphism.) Every chain determines a simplex of dimension equal to the chain's length. Thus, the order complex of a chain-finite bounded poset has a finite dimension.

\subsubsection{EL-labelings}
\label{sec:EL}
An edge-labeling of a poset $P$ is a function $\lambda: E(P)\to \Lambda$, where $\Lambda$ is any totally ordered set.  Given such a labeling, every finite, saturated chain  $p_1\lessdot\ldots\lessdot p_k$ in $P$ is associated with a $\Lambda$-word
\[
\lambda(p_1, p_2)
\lambda(p_2, p_3)
\cdots
\lambda(p_{k-1}, p_k).
\]
Using the ordering of $\Lambda$, chains of $P$ can be compared using the lexicographical order of the associated words.
A chain is called {\em increasing} if the associated word is strictly increasing.

An edge-labeling of a bounded poset $P$ is called an {\em EL-labeling} if every interval of $P$ has a unique increasing maximal chain and this chain lexicographically precedes all other maximal chains of the interval.
This notion was introduced by Bj\"orner \cite{bjorner1980shellable, bjorner1983lexicographically} in view of its strong topological implications for the order complex of $P$.
However, we use EL-labelings mainly as a combinatorial tool.

\subsubsection{Acyclic matchings of posets and discrete Morse theory}
\label{sec:dmt}
We briefly review Forman's discrete Morse theory for CW complexes \cite{forman1998morse} in the poset-theoretical formulation introduced by Chari \cite{chari2000discrete} and later extended by Batzies \cite{batzies2002discrete} to the case of infinite complexes. 

A matching of a poset $P$ is a matching of the associated Hasse diagram, i.e., a subset $\M\subseteq E(P)$ such that $m\cap m'=\varnothing$ whenever $m,m'\in \M$. An element of $P$ is {\em critical} with respect to a matching $\M$ if it is not contained in any $m\in \M$.
A matching defines an orientation of the edges of the Hasse diagram of $P$: an edge $[p,q]\in E(P)$ is oriented from $p$ to $q$ if $[p,q]\in \M$ and from $q$ to $p$ otherwise. The matching $\M$ is {\em acyclic} if the resulting oriented graph $H_\M$ has no directed cycles.

If $P$ is a graded poset, then any directed cycle of $H_\M$ must alternate between edges in $\M$ and outside of $\M$.
Indeed, if $p\to q$ is an oriented edge and $\rk$ is a rank function for $P$, then $\rk(q)-\rk(p)$ is equal to $1$ if the edge is in $\M$ and $-1$ otherwise.
Thus a matching $\M$ of a graded poset $P$ is acyclic if and only if the Hasse diagram of $P$ has no closed cycles that alternate between edges in $\M$ and outside of $\M$.

An acyclic matching $\M$ is called {\em proper} if, for every $p\in P$, there are only finitely many $q\in P$ that can be reached from $p$ by a directed path in $H_\M$.

Let $X$ be a finite-dimensional $CW$-complex. The {\em poset of cells} $\F(X)$  is the set of all (open) cells of $X$ with the partial order given by inclusion of closure: $\tau\leq \sigma$ if $\overline{\tau}\subseteq \overline{\sigma}$.
Recall that every cell $\sigma\subseteq X$ has a characteristic map $\Phi_\sigma: D^n\to X$ where $D^n$ is the closed $n$-ball and $n=\dim(\sigma)$. The poset $\F(X)$ is chain-finite and graded with rank function given by  $\rk(\sigma):=\dim(\sigma)$.  

Let $\sigma\in \F(X)$ be a cell of dimension $n$. A {\em regular face} of $\sigma$ is any cell $\tau\lessdot \sigma$ such that  $\Phi_\sigma$ restricts to a homeomorphism $\Phi^{-1}(\tau) \to \tau$, and  $\overline{\Phi_\sigma^{-1}(\tau)}$ is homeomorphic to an $(n-1)$-ball in $D^n$. A matching $\M$ of $\F(X)$ is called {\em regular} if $[\tau,\sigma]\in \M$ implies that $\tau$ is a regular face of $\sigma$.

\begin{theorem}[\cite{forman1998morse, chari2000discrete, batzies2002discrete}]
    Let $X$ be a finite-dimensional CW complex and let $\M$ be a proper and regular acyclic matching of $\F(X)$.
    Suppose that the set of critical elements of $\M$ forms a subcomplex $Y$ of $X$. Then $X$ deformation retracts onto $Y$ (we write $X \searrow Y$). In particular, the inclusion $Y\hookrightarrow X$ is a homotopy equivalence.
    \label{thm:dmt}
\end{theorem}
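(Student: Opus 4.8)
The plan is to build a filtration $Y = Z_0 \subseteq Z_1 \subseteq \dots \subseteq Z_N = X$ by subcomplexes in which each $Z_i$ is obtained from $Z_{i-1}$ by adjoining a single matched pair $\tau \lessdot \sigma$ of $\M$; equivalently, $Z_i$ collapses onto $Z_{i-1}$ by pushing $\operatorname{int}\sigma \cup \operatorname{int}\tau$ across $\sigma$ onto $\overline{\sigma} \setminus (\operatorname{int}\sigma \cup \operatorname{int}\tau)$. That such an elementary collapse is a deformation retraction is exactly what the regular-face condition provides, and it applies to every pair of $\M$ because $\M$ is regular. Composing the deformation retractions $Z_i \searrow Z_{i-1}$ then yields $X \searrow Y$, hence also that $Y \hookrightarrow X$ is a homotopy equivalence.

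The crux is the construction of the filtration, i.e.\ a suitable linear (or well-) order on the cells of $X$. Place the critical cells first: they form a subcomplex $Y$, hence a downward-closed subset of $\F(X)$, so they may constitute the initial block $Z_0 = Y$. Then order the matched pairs so that the two cells of each pair are consecutive, faces precede cofaces whenever compatible with this, and at the moment $[\tau,\sigma]$ is adjoined $\sigma$ has no coface in the current subcomplex while $\tau$ has $\sigma$ as its unique coface there. The only obstruction to such an order is a conflict $\tau,\tau' \lessdot \sigma,\sigma'$ with $[\tau,\sigma],[\tau',\sigma'] \in \M$, which would force $\{\tau,\sigma\}$ to come both before and after $\{\tau',\sigma'\}$; but this configuration produces a directed cycle $\sigma \to \tau' \to \sigma' \to \tau \to \sigma$ in $H_\M$, excluded by acyclicity. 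More generally, acyclicity of $H_\M$ is precisely what makes these constraints jointly satisfiable — this is the poset-theoretic version of ``an acyclic matching is the gradient field of a discrete Morse function''. The subcomplex hypothesis reenters here: a matched cell can never be a face of a critical cell (otherwise it would itself be critical), so the potentially troublesome extra cofaces of $\tau$ or $\sigma$ are all matched, hence already removed by the time we reach $[\tau,\sigma]$.

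For a finite complex the filtration has finitely many steps and we are done by composing finitely many deformation retractions. For an infinite complex the filtration is transfinite, and one must verify that the (possibly transfinite) composition of elementary deformation retractions is again a deformation retraction; this is where properness is essential. Properness says every directed path in $H_\M$ terminates, and two finiteness facts follow. First, each point of $X$ is relocated by only finitely many of the collapses: a sequence of collapses successively moving a given point either strictly lowers the dimension of the cell containing it or continues a gradient path in $H_\M$ (finite-dimensionality bounds the dimension drops, and within a fixed dimension it is properness that forces the path to terminate). Second, each closed cell of $X$ meets only finitely many of the collapses. By the weak topology of a CW complex, these facts make the composite retraction $r\colon X \to Y$ and the assembled homotopy $X \times [0,1] \to X$ continuous, after reparametrizing the transfinitely many homotopies to share the interval $[0,1]$. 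Carrying this out carefully is Batzies's contribution, which I would follow, using the finite case of Forman and Chari as the guiding picture.

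Thus the genuine difficulty is not combinatorial but topological: organizing infinitely many collapses into a single continuous deformation retraction. The properness hypothesis is precisely the tool for controlling the support of the gradient flow and making this possible; everything else is the by-now-standard machinery of discrete Morse theory.
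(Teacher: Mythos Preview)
The paper does not give its own proof of this theorem; it is stated as a background result and attributed to the cited references \cite{forman1998morse, chari2000discrete, batzies2002discrete}. Your sketch follows the standard line of argument from those sources (acyclicity $\Rightarrow$ existence of a compatible ordering of matched pairs $\Rightarrow$ sequence of elementary collapses; regularity $\Rightarrow$ each elementary collapse is a deformation retraction; properness $\Rightarrow$ the composite is well-defined and continuous in the infinite case) and is essentially correct. One small inaccuracy: you write that properness says ``every directed path in $H_\M$ terminates'', but the paper's definition is that only finitely many cells are reachable from any given cell; in an acyclic graph these are equivalent once you also use finite-dimensionality, so this does not affect your argument.
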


A useful tool for constructing acyclic matchings is given by the following well-known theorem.

\begin{theorem}[Patchwork theorem {\cite[Theorem 11.10]{kozlov2007combinatorial}}]
	Let $\eta \colon P \to Q$ be a poset map.
	For all $q \in Q$, assume there is an acyclic matching $\M_q \subseteq E(P)$ that involves only elements of the fiber $\eta^{-1}(q) \subseteq P$.
	Then the union of these matchings is an acyclic matching on $P$.
	\label{thm:patchwork}
\end{theorem}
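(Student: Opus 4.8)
The plan is to deduce the acyclicity of $\M := \bigcup_{q\in Q}\M_q$ from that of the individual $\M_q$ by showing that every directed cycle in the oriented Hasse diagram $H_\M$ is forced to lie inside a single fiber $\eta^{-1}(q)$, where it would contradict the acyclicity of $\M_q$. First, though, I would record the (immediate) fact that $\M$ is a matching at all: since the fibers $\eta^{-1}(q)$ partition $P$ and each $\M_q$ only involves elements of $\eta^{-1}(q)$, the edge sets $\M_q$ pairwise share no vertices, so their union is again a matching of the Hasse diagram of $P$.

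The one structural point to isolate is that the fibers are order-convex: if $p\le u\le p'$ with $\eta(p)=\eta(p')=q$, then order-preservation of $\eta$ gives $q=\eta(p)\le\eta(u)\le\eta(p')=q$, hence $\eta(u)=q$. Two consequences follow. On the one hand, for any cover $p\lessdot p'$ of $P$ we have $\eta(p)\le\eta(p')$, with equality exactly when $p$ and $p'$ lie in a common fiber. On the other hand, a cover relation computed inside the induced subposet $\eta^{-1}(q)$ is the same as a cover relation of $P$ between two elements of $\eta^{-1}(q)$, so the Hasse diagram of $\eta^{-1}(q)$ is exactly the subgraph of the Hasse diagram of $P$ induced on $\eta^{-1}(q)$, and an edge lying in this subgraph belongs to $\M$ if and only if it belongs to $\M_q$.

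Now I would trace $\eta$ along a directed path of $H_\M$. By definition a directed edge of $H_\M$ is either an edge of $\M$ traversed from its smaller to its larger endpoint, or an edge outside $\M$ traversed from larger to smaller; in the first case the edge lies in some $\M_q$, hence inside one fiber, so $\eta$ is constant on it, while in the second case $\eta$ weakly decreases. Thus $\eta$ is nonincreasing along every directed path, so along a directed cycle $p_0\to p_1\to\cdots\to p_n=p_0$ the inequalities $\eta(p_0)\ge\eta(p_1)\ge\cdots\ge\eta(p_n)=\eta(p_0)$ collapse to equalities; call the common value $q$. Then the whole cycle lies in $\eta^{-1}(q)$, its edges are edges of the Hasse diagram of $\eta^{-1}(q)$, and each of them is in $\M$ if and only if it is in $\M_q$; hence the cycle is a directed cycle of $H_{\M_q}$, contradicting the acyclicity of $\M_q$. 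Therefore $H_\M$ has no directed cycle and $\M$ is acyclic.

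I do not anticipate a real obstacle: the argument is entirely formal and uses no finiteness of $P$ or $Q$. The only place demanding a little care is the orientation bookkeeping of the last paragraph — checking that an edge inside a fiber receives the same orientation in $H_\M$ as in $H_{\M_q}$, and that covers inside the fiber coincide with covers of $P$ — and this is precisely what the order-convexity observation of the second paragraph is there to supply.
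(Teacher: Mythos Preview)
The paper does not supply its own proof of this statement; it simply quotes the Patchwork theorem from Kozlov's book and uses it as a black box. Your argument is correct and is essentially the standard proof: trace $\eta$ along a putative directed cycle, observe that matching edges keep $\eta$ constant while non-matching edges make it weakly decrease, conclude the cycle lies in a single fiber, and invoke acyclicity of $\M_q$ there. The order-convexity remark is a nice touch ensuring that covers within a fiber agree with covers in $P$, so the restricted orientation matches the ambient one.
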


\subsection{Interval groups and Garside structures}
\label{sec:garside-structures}

Let $G$ be a group with a generating set $R$ not containing the identity of $G$ and such that $R = R^{-1}$.
Then $G$ is partially ordered by assigning $x \leq y$ whenever $l(x) + l(x^{-1}y) = l(y)$, where $l \colon G \to \mathbb{N}$ denotes the length function induced by $R$.
In other words, the relation $x \leq y$ holds if and only if there is a geodesic path between $1$ and $y$ passing through $x$, inside the right Cayley graph of $G$ (with respect to the generating set $R$).

Fix an element $g \in G$ and consider the interval $[1,g] \subseteq G$, consisting (by definition) of all elements $x \in G$ such that $l(x) + l(x^{-1}g) = l(g)$.
The interval $[1,g]$ is \emph{balanced} if the set of elements $x\in G$ satisfying $l(x) + l(x^{-1}g) = l(g)$ coincides with the set of elements $x\in G$ satisfying $l(gx^{-1}) + l(x) = l(g)$.
In other words, this condition requires that the interval $[1,g]$ inside the right Cayley graph (as we have defined it above) contains the same elements as the interval $[1,g]$ inside the left Cayley graph.

Assuming to have a balanced interval $[1,g]$, construct a new group $G_g$ (called an \emph{interval group}) and a CW complex $K$ (called an \emph{interval complex}) as follows.
The interval complex $K$ is a $\Delta$-complex (in the sense of \cite{hatcher}) having one $d$-dimensional simplex $\sigma$ denoted by $[x_1|x_2|\dotsb|x_d]$ for every $d$-tuple of elements $x_1, x_2, \dotsc, x_d \in [1,g] \setminus \{1\}$ such that $l(x_1) + l(x_2) + \dotsb + l(x_d) = l(x_1x_2\dotsm x_d)$ and $x_1x_2\dotsm x_d \in [1,g]$.
The faces of $[x_1|x_2|\dotsb|x_d]$ are given by $\partial_0(\sigma) = [x_2|\dotsb|x_d]$,
$\partial_{i}(\sigma) = [x_1|\dotsb|x_ix_{i+1}|\dotsb|x_d]$ for $i=1, \dotsc, d-1$, and
$\partial_d(\sigma) = [x_1|\dotsb|x_{d-1}]$.
See \cite[Definition 2.8]{paolini2021proof} for more details.
Note that $K$ is a quotient of the order complex of $[1,g]$.
The $0$-cell $[\,]$ of $K$ is not a regular face of the $1$-cells; all other faces are regular.

Define the interval group $G_g$ as the fundamental group of $K$.
Then, $G_g$ has a presentation with a generator for each $1$-cell $[x]$ of $K$ and a relation $[x][y] = [xy]$ for each $2$-cell $[x|y]$ of $K$.
The main reason why we consider interval groups and interval complexes is the following result.

\begin{theorem}[\cite{bestvina1999,dehornoy1999gaussian,charney2004bestvina,mccammond2005introduction,dehornoy2015foundations}]
    If the interval $[1,g]$ is a balanced lattice, then $G_g$ is a \emph{Garside group} and the interval complex $K$ is a classifying space for $G_g$.
    In addition, the word problem for $G_g$ is solvable, provided that one can algorithmically check equality and compute meets and joins in $[1,g]$.
    \label{thm:garside}
\end{theorem}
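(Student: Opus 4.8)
This is by now a classical cornerstone of Garside theory; I only indicate the plan one would follow. The strategy is to recognize $[1,g]$ as the poset of \emph{simple elements} of a Garside structure on $G_g$. The first step is to introduce the \emph{interval monoid} $M_g$, given by the same presentation as $G_g$ but read as a monoid presentation: one generator $[x]$ for each $x\in[1,g]\setminus\{1\}$ and one relation $[x][y]=[xy]$ for each pair with $l(x)+l(y)=l(xy)$ and $xy\in[1,g]$. Sending $[x]\mapsto l(x)$ defines a homomorphism $M_g\to\mathbb N$ (each relation preserves total length), so $M_g$ is atomic with atom set $\{[r]:r\in R\cap[1,g]\}$ and has no nontrivial units. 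The goal is then to show that $M_g$ is a \emph{Garside monoid} with Garside element $\Delta:=[g]$: it is left- and right-cancellative, any two elements have left and right gcds and conditional lcms, and the left- and right-divisors of $\Delta$ coincide, generate $M_g$, and correspond order-preservingly to $[1,g]$. Granting this, $M_g$ satisfies Ore's conditions, $G_g$ is its group of fractions (so $M_g\hookrightarrow G_g$), hence a Garside group by definition with lattice of simples $[1,g]$; see \cite{dehornoy1999gaussian, mccammond2005introduction, dehornoy2015foundations}.

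The hard part is the cancellativity of $M_g$; once this is in place, the remaining Garside axioms follow from the hypotheses with comparatively little work. The plan is to prove it through Dehornoy's subword reversing: one forms the rewriting system attached to the presentation of $M_g$ and verifies that it is \emph{complete}, which reduces to a local ``cube condition'' on triples of elements of $[1,g]$. This is exactly what the lattice hypothesis supplies — meets of arbitrary pairs and joins of pairs with a common upper bound $\leq g$, fitting together coherently. An equivalent and cleaner packaging is to check that $[1,g]$, equipped with the partial product above, is a \emph{bounded Garside germ}, and then invoke the theorem that the monoid of such a germ is Garside and reconstitutes the germ as its set of simples (\cite{dehornoy2015foundations}). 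In this language the lattice property makes the germ a lattice germ, while the \emph{balanced} hypothesis is precisely what identifies the poset of left-divisors of $\Delta$ with that of right-divisors, i.e.\ what makes $\Delta$ a genuine two-sided Garside element.

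For the assertion that $K$ is a classifying space, note that $\pi_1(K)=G_g$ holds by construction, so it suffices to contract the universal cover $\widetilde K$. One identifies $\widetilde K$ with the natural $G_g$-complex on the translates $h\cdot[x_1|\cdots|x_d]$ and contracts it by straightening towards the left-greedy normal form: every element of $G_g$ has a unique expression $\Delta^{-k}s_1\cdots s_m$ with the $s_i$ simple and consecutive pairs left-weighted, and this normal form defines an explicit discrete Morse (collapsing) scheme on $\widetilde K$ with a single critical cell — this is the Bestvina normal-form complex argument of \cite{charney2004bestvina}, and it uses existence and uniqueness of normal forms, hence the lattice property. For the word problem, given a word in the $[x]^{\pm1}$ one computes its left-greedy normal form and compares: each step multiplies two simples and extracts the largest simple left-divisor of the product (a meet computation in $[1,g]$) or conjugates a simple by $\Delta^{\pm1}$ (a complement/meet computation), and the number of steps is bounded in terms of the word length; so if equality, meets, and joins in $[1,g]$ are computable then the normal form is, and two words represent the same element of $G_g$ if and only if their normal forms coincide. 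In short, the single genuine obstacle is deriving cancellativity of $M_g$ (equivalently, completeness of reversing, or the germ axioms) from the lattice and balanced hypotheses; everything downstream is the standard Garside machinery.
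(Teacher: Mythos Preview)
The paper does not give its own proof of this theorem: it is stated as a result from the literature, with citations to \cite{dehornoy1999gaussian,charney2004bestvina,mccammond2005introduction,dehornoy2015foundations}, and no argument is supplied. Your outline is a faithful sketch of the standard approach found in those references: build the interval monoid, derive cancellativity from the lattice hypothesis via reversing or germ theory, identify $[1,g]$ with the simples of a Garside structure (the balanced hypothesis matching left and right divisors of $\Delta$), then obtain the classifying-space statement from the Bestvina normal-form complex argument and the word problem from the greedy normal form. So there is nothing to compare against in the paper itself, and your summary is accurate as a roadmap.
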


Garside groups were introduced by Dehornoy and Paris \cite{dehornoy1999gaussian}, building on previous work of Garside \cite{garside1969braid}.
We will not need the actual definition of a Garside group.

\subsection{Coxeter groups}
\label{sec:coxeter-groups}

In this section, we outline some basics of the general theory of Coxeter groups. We refer to \cite{bourbaki1968elements, humphreys1992reflection} for a more thorough treatment.

A {\em Coxeter system} is a group $W$ with a distinguished set of generators $S$ such that
\begin{equation}
	\label{eq:coxeter-presentation}
 	W = \< S \mid (st)^{m(s,t)} = 1 \; \text{for all } s,t \in S \text{ such that $m(s,t) \neq \infty$} \>,
\end{equation}
for some function $m:S\times S\to \mathbb N\cup\{\infty\}$ satisfying $m(s,s')=1$ if $s=s'$ and $m(s,s')\geq 2$ otherwise. The function $m$ can be encoded in a {\em Coxeter diagram}, i.e., a graph on the vertex set $S$ where two vertices $s,s'$ are joined by an edge exactly when $m(s,s')\geq 3$. The edge is labeled with $m(s,s')$ when $m(s,s')\geq 4$.
The group $W$ is called a {\em Coxeter group}.
For us, a Coxeter group $W$ always implicitly carries with it a fixed generating set $S$ which makes $(W, S)$ a Coxeter system.
For this reason, we often speak of properties of a Coxeter group $W$ which depend on the Coxeter system (and not only on the group structure).
For example, a Coxeter group $W$ is called \emph{irreducible} if the corresponding Coxeter diagram is connected.

The {\em parabolic subgroup} of $W$ associated with a subset $I\subseteq S$ is the subgroup $W_I$ of $W$ that is generated by the elements of $I$.
Following \cite{mcmullen2002coxeter}, we call an element $u\in W$  {\em essential} if it is not conjugated into any parabolic subgroup $W_I$ with $I\neq S$.
Let $n:=\vert S\vert$. This quantity is the {\em rank} of the given Coxeter group.

\subsubsection{Geometric representation and reflections}
\label{sec:geometric-representation}
The function $m$ determines a symmetric bilinear form $B$ on the vector space $\R^S$ defined on basis vectors as
\[
B(e_s,e_{s'}):= -\cos(\pi/m(s,s'))
\]
where we set $B(e_s,e_{s'})=-1$ if $m(s,s')=\infty$.
Now, to every $s\in S$ is naturally associated the {\em reflection}
\[ r_s: x\mapsto x - 2B(x,e_s) \, e_s. \]
The assignment $s \mapsto r_s$ extends to a linear action of $W$ on $\mathbb R^S$ that preserves the bilinear form $B$ (see \cite[5.3]{humphreys1992reflection}).
Denote by $R$ the set of all elements of $W$ that act as reflections (i.e., that fix a hyperplane and send some non-zero vector to its opposite). This is called the \emph{set of reflections} of the Coxeter group and coincides with the set of all conjugates of elements of $S$. In particular, $R$ generates $W$. The {\em absolute length} (also called \textit{reflection length}) of an element $u\in W$ is the minimum length $l_R(u)$ of a reduced expression of $u$ as a word in the generators $R$.
The elements of $S$ are called \emph{simple reflections}.

\subsubsection{Coxeter elements and dual Coxeter systems}
A {\em Coxeter element} of a Coxeter system $(W,S)$ is any product of all elements of $S$ in some order.
Every Coxeter element has reflection length equal to $n = |S|$ (see \cite[Lemma 3.8]{ingalls2009noncrossing} and \cite[Lemma 5.1]{paolini2021proof}) and is essential \cite{paris2007irreducible}.
For any choice of a Coxeter element $w$ of $(W,S)$, the triple $(W,R,w)$ is often called a {\em dual Coxeter system}.
Associated with any dual Coxeter system is the corresponding poset of {\em noncrossing partitions},\footnote{The name refers to a well-known combinatorial interpretation of such elements in the case of the symmetric group, see for example \cite{armstrong2009generalized}.} namely
\[
[1,w]:=\{u\in W \mid l_R(u) + l_R(u^{-1}w) = l_R(w)\},
\]
partially ordered by
\[
u \leq v \textrm{ if and only if }
l_R(u) + l_R({u}^{-1}v) = l_R(v).
\]
As the notation suggests, the poset $[1,w]$ coincides with the interval between the identity $1$ and the Coxeter element $w$ in the right Cayley graph of $W$ with respect to the generating set $R$.
In particular, it contains $1$ and $w$ as its unique minimal and maximal elements.
Note that $[1,w]$ is balanced because the generating set $R$ is closed under conjugation.
The noncrossing partition posets $[1,w]$ are strictly related to  ``dual Garside structures'', see \Cref{sec:garside-structures}.

\subsubsection{Spherical, affine, and hyperbolic Coxeter groups}
\label{sec:spherical-affine-hyperbolic-coxeter}

A Coxeter group $W$ is called \emph{spherical} if the bilinear form $B$ (defined in \Cref{sec:geometric-representation}) is positive definite; it is called \emph{affine} if $B$ is positive semidefinite but not positive definite.
The spherical case occurs precisely when $W$ is finite.

For simplicity, suppose now that $B$ is nondegenerate. Then $B$ induces a pairing, and hence an identification, between $\R^S$ and its dual $V$.
In particular, we have an induced action of $W$ on $V$.
The generators $s \in S$ act as reflections with respect to the linear hyperplanes $\widehat{H}_s:=\{p\in V \mid B(p,e_s)=0\}$ that bound a closed polyhedron $D\subseteq V$ (a simplicial cone).
This polyhedron is a fundamental region for the action of $W$ on $V$ and the union of all $W$-translates of $D$ is a convex cone $I$ in $V$, the {\em Tits cone} \cite[Section 5.13]{humphreys1992reflection}. 
The cone $I$ is tiled by copies of $D$. This tiling corresponds to the subdivision of $I$ determined by the collection of all reflection hyperplanes $\widehat{H}_r$ for $r \in R$.

The Coxeter group $W$ is called {\em hyperbolic} if the bilinear form $B$ is nondegenerate of signature $(n-1,1)$ and every vector $p$ in the Tits cone satisfies $B(p, p) < 0$ \cite[Section 6.8]{humphreys1992reflection}.
If $W$ is hyperbolic, then the action of $W$ on $V$ is determined by its restriction to the hyperboloid model of the hyperbolic space $\H^{n-1}$ given by all $p$ with $B(p,p)=-1$, with the metric induced by $B$.
Since the action of $W$ preserves the metric, $W$ acts by isometries of the hyperbolic space. Thus, hyperbolic Coxeter groups of rank $n$ are a subclass of all discrete groups of isometries of $\H^{n-1}$ generated by reflections. Note that this class is strictly larger; see \cite{vinberg1971discrete}.

Let $W$ be a hyperbolic Coxeter group and, for all $r\in R$, let $H_r$ be the intersection of $\widehat{H}_r$ with the hyperboloid.
The set $\{H_r\}_{r\in R}$ is the {\em reflection arrangement} of $W$. It is a locally finite set of hyperplanes of $\H^{n-1}$. The open cells of the induced subdivision of the hyperbolic space are the (open) {\em chambers} of the reflection arrangement. Chambers are naturally and bijectively labeled by elements of $W$, once we label the interior of $D$ with the identity element. 

The Coxeter graphs corresponding to spherical, affine, and hyperbolic Coxeter systems have been completely classified (see \cite[Chapter 6]{humphreys1992reflection}). The construction of the reflection arrangement can be extended to all cases (see e.g.\ \Cref{fig:hyperbolic-arrangements,fig:spherical-affine-arrangements}).

\subsection{Standard and dual Artin groups}
\label{sec:artin-groups}

For any Coxeter system $(W, S)$, where $W$ is presented as in \eqref{eq:coxeter-presentation}, there is an associated Artin group defined as
\begin{equation}
    \label{eq:artin-presentation}
 	G_W = \< S \mid \!\!\underbrace{stst\dotsm}_{m(s,t) \text{ terms}} \! = \! \underbrace{tsts\dotsm}_{m(s,t) \text{ terms}} \forall\, s,t \in S \text{ such that $m(s,t) \neq \infty$} \>.
\end{equation}
An Artin group is called irreducible, spherical, affine, or hyperbolic if the corresponding Coxeter group is respectively irreducible, spherical (i.e., finite), affine, or hyperbolic.

Define the \emph{configuration space} associated with $W$ as
\[ Y = (I \times I) \setminus \bigcup_{r \in R} \widehat H_r \times \widehat H_r. \]
Then $W$ acts freely and properly discontinuously on $Y$, and the quotient space $Y_W = Y/W$ is the \emph{orbit configuration space} associated with $W$.
The fundamental group of $Y_W$ is isomorphic to the Artin group $G_W$ \cite{van1983homotopy}.
The orbit configuration space $Y_W$ has the homotopy type of a CW complex (called the \emph{Salvetti complex}) having a $k$-cell for each subset $T \subseteq S$ of cardinality $k$ that generates a finite subgroup $W_T \subseteq W$ \cite{salvetti1987topology, salvetti1994homotopy} (see also \cite{paris2012k}).

In full generality, Artin groups are not well understood, in the sense that there are very few known results that apply to all of them.
The following are among the most important open problems on Artin groups: the word problem; determining the center; solving the $K(\pi, 1)$ conjecture, due to Brieskorn, Arnol'd, Pham, and Thom.

\begin{conjecture}[$K(\pi, 1)$ conjecture]
    The orbit configuration space $Y_W$ is a classifying space for the corresponding Artin group $G_W$.
\end{conjecture}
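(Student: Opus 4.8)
\emph{The plan} is to prove the $K(\pi,1)$ conjecture in the rank-three case via the dual approach sketched in \Cref{sec:dual-approach}, so that the argument splits into three largely independent parts. The \emph{first} part is geometric and combinatorial: for a Coxeter element $w$ of a rank-three Coxeter system, one must show that the noncrossing partition poset $[1,w]$ is a lattice. This is the step that does not follow from general principles --- it already fails for some affine systems --- so the hyperbolic geometry of $\H^2$ has to be used in an essential way. I would first describe the Coxeter elements of rank-three hyperbolic groups (they are glide reflections) together with their axes, working with explicit matrices in the half-plane model as in \Cref{sec:half-plane-model}; \Cref{lemma:translations} on compositions of a glide reflection with a reflection, together with its variants tracking how a translation axis crosses $\Fix(r)$, is precisely the local input needed. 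From this one identifies the reflections $\le w$ and, crucially, gains enough control over pairwise meets and joins to verify the lattice axioms. Since $R$ is closed under conjugation, $[1,w]$ is automatically balanced; hence, once it is a lattice, \Cref{thm:garside} yields that the dual Artin group $W_w$ is a Garside group and that the interval complex $K_W$ is a classifying space for $W_w$.

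The \emph{second} part replaces $W_w$ by the standard Artin group $G_W$ and $K_W$ by a finite complex modelling the orbit configuration space $Y_W$. I would take the subcomplex $X'_W \subseteq K_W$ introduced in the preliminaries, already known to be homotopy equivalent to $Y_W$, and show that $K_W$ deformation retracts onto $X'_W$. The mechanism is discrete Morse theory (\Cref{thm:dmt}): build a proper, regular acyclic matching on the poset of cells $\F(K_W)$ whose set of critical cells is exactly the subcomplex $X'_W$. To construct the matching I would use the \emph{axial ordering} of the reflections in $[1,w]$ --- the same total order that yields the EL-labeling of $[1,w]$ --- to decide, for each simplex $[x_1|\dotsb|x_d]$, which generator to ``absorb'' or ``split off'', and then assemble the local matchings using the Patchwork theorem (\Cref{thm:patchwork}), fibering $\F(K_W)$ over a poset that records, say, the $\le$-smallest reflection occurring in a simplex. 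Acyclicity and properness are then verified from the lexicographic/shelling structure, as in the affine case.

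Granting these two parts, the conclusion is formal. The retraction $K_W \searrow X'_W$ makes the inclusion $X'_W \hookrightarrow K_W$ a homotopy equivalence, and since $X'_W \simeq Y_W$ while $K_W$ is a classifying space for $W_w$, the space $Y_W$ is aspherical with $\pi_1(Y_W) \cong W_w$. By \cite{van1983homotopy} one also has $\pi_1(Y_W) \cong G_W$, so $W_w \cong G_W$ (this is the dual/standard isomorphism asserted in the Dual structure Main Theorem) and $Y_W$ is a $K(G_W,1)$, i.e.\ the conjecture. The remaining items of \Cref{thm:main2} --- the Garside property, solvability of the word problem (using computability of meets and joins in $[1,w]$), and triviality of the center when $W$ is infinite --- then follow from \Cref{thm:garside} and standard facts about Garside groups.

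I expect the principal obstacle to be the second part: designing the acyclic matching so that its critical cells form \emph{precisely} the subcomplex $X'_W$ --- not a larger set, and not a set that fails to be a subcomplex --- and then checking acyclicity and properness on the (infinite) complex $K_W$. A secondary but genuine difficulty is the lattice property itself: unlike McCammond's affine analysis, there is no clean geometric classification of the elements of $[1,w]$ in the hyperbolic setting, so the bounds on meets and joins must be extracted indirectly from the axis geometry and the axial ordering rather than read off directly.
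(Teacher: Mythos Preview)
Your overall three-part strategy --- lattice property for $[1,w]$, then $K_W \searrow X_W'$ via discrete Morse theory, then the formal conclusion --- is exactly the paper's route. Two points where your expectations diverge from what actually happens are worth flagging.

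First, the lattice property is much easier than you anticipate and does not use the axial ordering or any delicate axis geometry. The paper observes (\Cref{lemma:mov}) that $u \mapsto \Mov(u)$ is a poset isomorphism from $[1,w]$ onto its image in the lattice of linear subspaces of $\R^3$; a bowtie in $[1,w]$ would then force two distinct rank-two elements to have the same two-dimensional moved set, a contradiction. So the ``secondary difficulty'' you flag essentially evaporates.

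Second, the retraction $K_W \searrow X_W'$ needs an ingredient you do not mention. In the affine case every fiber component of the map $\eta$ meets $X_W'$, and the matching from \cite[Section~8]{paolini2021proof} suffices. In the hyperbolic rank-three case this fails: there are fiber components consisting entirely of horizontal reflections (types (iii) non-exceptional and (v) in the paper's classification, \Cref{lemma:fiber-components}) that never touch $X_W''$. The paper handles this by introducing intermediate subcomplexes $X_W'' \subseteq K_W' \subseteq K_W'' \subseteq K_W$ and a separate perfect matching $\M$ on $K_W \setminus K_W''$, using \emph{special translations} and the five-lines lemma (\Cref{lemma:five-lines}) to pair these stray components across fibers and to define a weight $\omega$ proving acyclicity and properness. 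Only after collapsing to $K_W'$ does the affine-style matching $\mathcal{N}$ (driven by the axial ordering/EL-labeling, as you suggest) finish the job down to $X_W''$; a final inductive step (\Cref{prop:collassoT}) passes to $X_W'$. Your single-matching plan, fibered by the smallest reflection, would not obviously see or dispose of these horizontal-only components.
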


Among other things, the $K(\pi, 1)$ conjecture gives a way to compute the homology and cohomology of Artin groups and implies that Artin groups are torsion-free (a property that is also unknown in general).
So far, the $K(\pi, 1)$ conjecture has been proved for spherical Artin groups (by Deligne \cite{deligne1972immeubles}), affine Artin groups (by the second and third authors \cite{paolini2021proof}), $2$-dimensional and FC-type Artin groups (by Charney and Davis \cite{charney1995k}).
Special cases of these were previously proved by Fox and Neuwirth \cite{fox1962braid}, Brieskorn \cite{brieskorn1973groupes}, Okonek \cite{okonek1979dask}, Hendriks \cite{hendriks1985hyperplane}, Callegaro, Moroni and Salvetti \cite{callegaro2010k}.
See \cite{paris2012k} for a survey on this problem (written before the full solution of the affine case) and \cite{paolini2021dual} for an exposition of the ``dual approach'' introduced to solve the affine case, which we use and extend here.
See also \cite{charney1995k, godelle2012basic} for an overview of open problems on Artin groups (mostly up to date, except for the affine case).

If $W$ is a finite Coxeter group, then $G_W$ is the interval group associated with the interval $[1,\delta] \subseteq W$, where we use $S$ as the generating set of $W$ and define $\delta$ as the longest element of $W$.
The interval $[1, \delta]$ is a balanced lattice and thus makes $G_W$ a Garside group (by \Cref{thm:garside}). This is known as the \emph{standard Garside structure} on spherical Artin groups, introduced by Garside for braid groups \cite{garside1969braid} and developed and studied by Brieskorn, Saito, Deligne, and others \cite{brieskorn1972artin, deligne1972immeubles}.
The interval complex associated with $[1,\delta]$ is homotopy equivalent to the orbit configuration space $Y_W$, in accordance with the $K(\pi,1)$ conjecture (this was explicitly proved in \cite{delucchi2009combinatorics}).

An alternative way to realize spherical Artin groups as Garside groups was introduced by Bessis \cite{bessis2003dual}, following prior work of Birman, Ko, and Lee on braid groups \cite{birman1998new}.
Consider a noncrossing partition poset $[1,w]$ in a finite Coxeter group $W$.
It turns out that $[1,w]$ is a lattice, and the corresponding interval group is naturally isomorphic to the Artin group $G_W$.
This is known as the \emph{dual Garside structure} on spherical Artin groups.
The interval complex associated with $[1,w]$ is therefore another model for the classifying space of the spherical Artin group $G_W$, with a combinatorial structure that is substantially different from the Salvetti complex $X_W$ and the interval complex arising from the standard Garside structure.

Noncrossing partition posets in general Coxeter groups are not always lattices.
Indeed, McCammond showed that the lattice property fails in most affine cases \cite{mccammond2015dual}.
Nevertheless, the dual structure (which is not necessarily Garside) proved useful to answer the most important open questions on affine Artin groups, such as the word problem, the center, and the $K(\pi, 1)$ conjecture \cite{mccammond2017artin, paolini2021proof}.
For all affine Coxeter groups $W$, McCammond and Sulway proved (among other things) that the interval group $W_w$ associated with $[1,w]$ is isomorphic to the usual Artin group $G_W$ \cite{mccammond2017artin}; the second and third author proved that the interval complex $K_W$ associated with $[1,w]$ is a classifying space and deformation retracts onto a finite subcomplex $X_W' \subseteq K_W$ which is homotopy equivalent to the orbit configuration space $Y_W$ \cite{paolini2021proof}.

The subcomplex $X_W'$ can be defined for an arbitrary Coxeter system and is always homotopy equivalent to $Y_W$. We review the definition of $X_W'$ in \Cref{sec:interval-complex}.
Therefore, proving that $K_W$ deformation retracts onto $X_W'$ implies the isomorphism $G_W \cong W_w$ between the standard and dual Artin groups.
In addition, if the interval $[1,w]$ is a lattice, then a deformation retraction $K_W \cong X_W'$ yields a Garside structure on $G_W$ and implies the $K(\pi, 1)$ conjecture.
These implications are discussed more thoroughly in \cite{paolini2021dual}.

\section{Coxeter elements in rank-three hyperbolic groups}
\label{sec:coxeter-elements}

Let $(W,S)$ be a rank-three Coxeter system.
Recall from \cite[Section 6.7]{humphreys1992reflection} that, if the three labels $m(s,t)$ for $s \neq t$ are denoted by $m_1, m_2, m_3$, then $W$ is hyperbolic if and only if $\frac {1}{m_1} + \frac {1}{m_2} + \frac {1}{m_3} < 1$ (set $\frac{1}{m_i} = 0$ if $m_i = \infty$).
In particular, all irreducible rank-three Coxeter systems are hyperbolic except when the triple $(m_1, m_2, m_3)$ takes the values $(2, 3, 3), (2, 3, 4), (2, 3, 5), (2, 3, 6), (2, 4, 4)$, or $(3, 3, 3)$, up to permutations.
These special cases are either spherical or affine and the corresponding arrangements are shown in \Cref{fig:spherical-affine-arrangements}.
All questions we consider have been solved already in the spherical and affine cases,
so from now on, \textbf{assume that the Coxeter system $(W, S)$ is hyperbolic}, thus acting by isometries on the hyperbolic plane $\H^2$. Two examples are shown in \Cref{fig:hyperbolic-arrangements}.

\subsection{Coxeter elements and their axes}
Let $S=\{a,b,c\}$ and fix a Coxeter element $w = abc$.
Note that $w$ has reflection length equal to $3$ (both in $W$ and in the group of isometries of $\H^2$), so it is a glide reflection (see \Cref{sec:hyperbolic-plane}).

\begin{definition}
    The (oriented) axis of $w$ is called the \emph{Coxeter axis} and denoted by $\ell$.
    An \emph{axial chamber} is a chamber whose interior intersects the Coxeter axis. We think of the orientation of the Coxeter axis as defining the ``positive'' or ``upward'' direction. Accordingly, a point $p \in \ell$ (or an axial chamber $C$) is {\em above} another point $p' \in \ell$ (or another axial chamber $C'$) if it is further along the Coxeter axis in the positive direction.
    Any vertex of an axial chamber is called an \emph{axial vertex}.
    See \Cref{fig:coxeter-axis}.
    \label{def:coxeter-axis}
\end{definition}

\begin{figure}
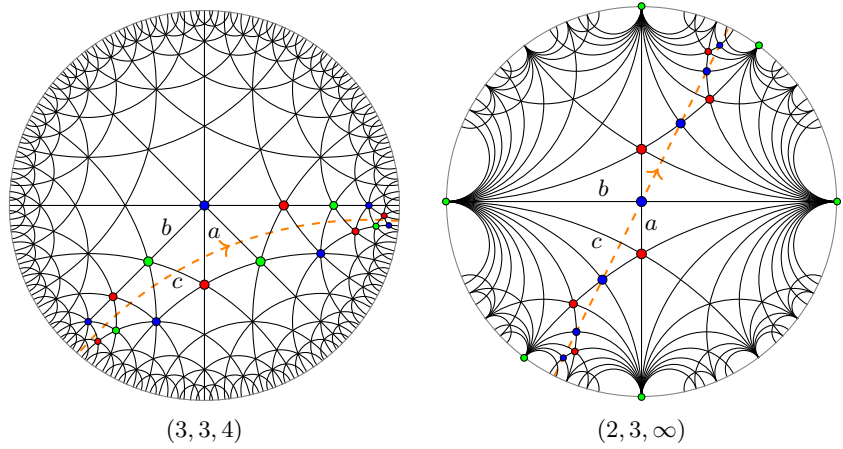

    \newcommand{\scale}{2.6}
    \begin{center}
        \begin{subfigure}[t]{.45\linewidth}
            \centering
            \begin{tikzpicture}[scale=\scale]
                \clip (0,0) circle (1);
                
                \draw[
                    white,
                    decoration={markings, mark=at position 0.52 with {\arrow[orange, thick]{>}}},
                    postaction={decorate}
                ] (-142.629284:1) -- (9.629284:1);
                
                \begin{scope}[every path/.style={thick,dashed,orange}]
                    \hgline{-130.429284}{-4.570716}{}
                \end{scope}
                
                \input{lines334}
                
                \input{axialvertices334}
                
                \draw[black!50, thick] (0,0) circle (1);
                
                \node at (0.05, -0.14) {\small $a$};
                \node at (-0.195, -0.11) {\small $b$};
                \node at (-0.14, -0.395) {\small $c$};
            \end{tikzpicture}
            \caption*{$(3, 3, 4)$}
        \end{subfigure}
        \begin{subfigure}[t]{.45\linewidth}
            \centering
            \begin{tikzpicture}[scale=\scale]
                \begin{scope}
    	            \clip (0,0) circle (1);

                    \draw[
                        white,
                        decoration={markings, mark=at position 0.595 with {\arrow[orange, thick]{>}}},
                        postaction={decorate}
                    ] (-116.565051:1) -- (63.434949:1);

                    \begin{scope}[every path/.style={thick,dashed,orange}]
                        \hgline{-116.565051}{63.434949}{}
                    \end{scope}

    	            \input{lines23infty}

    	            \draw[black!50, thick] (0,0) circle (1);
	            \end{scope}

                \input{axialvertices23infty}

                \node at (0.05, -0.12) {\small $a$};
                \node at (-0.195, 0.07) {\small $b$};
                \node at (-0.23, -0.21) {\small $c$};
            \end{tikzpicture}
            \caption*{$(2, 3, \infty)$}
        \end{subfigure}
    \end{center}
    
    \caption{Reflection arrangements of \Cref{fig:hyperbolic-arrangements} with the axis of a Coxeter element $w = abc$ (orange dashed line).
    The axial vertices are colored based on their orbit under the action of the infinite cyclic group generated by $w$ (see \Cref{lemma:axial-vertices-orbits}).}
    \label{fig:coxeter-axis}
\end{figure}

In the hyperboloid model $L \subseteq \R^3$, the Coxeter axis $\ell$ is the intersection between $L$ and a linear plane $P \subseteq \R^3$ which we call the \emph{Coxeter plane}.
As noted in \cite[Section 2]{paolini2021dual}, $P$ is indeed the analog of the classical Coxeter plane of spherical Coxeter groups.

Let $C$ be any chamber of the reflection arrangement. In particular, the closure $\bar C$ intersects every $W$-orbit in exactly one point.
Denote by $\pi_C\colon \H^2 \to \bar C$ the projection defined by $\pi_C(x):=Wx\cap \bar C$ for every $x\in \H^2$.
The following lemma describes the location of the Coxeter axis $\ell$.

\begin{lemma}
    For every point $x \in \ell$, the image of the segment $[x, w(x)] \subseteq \ell$ under $\pi_C$ is the shortest loop that touches the three walls of $C$ and coincides with the orthic triangle of $C$ (see \Cref{fig:pedal-triangle}).
    \label{lemma:pedal-triangle}
\end{lemma}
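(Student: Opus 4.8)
The plan is to realize $\pi_C(\ell)$ explicitly as a closed billiard trajectory inside $\bar C$, to compute its length, and then to compare it with every loop in $\bar C$ meeting the three walls; the pedal triangle will come out as both this trajectory and the unique minimizer. Two preliminary reductions: since $\pi_{gC}=g\circ\pi_C$ for all $g\in W$ and the pedal triangle is an isometry invariant of the triangle $C$, it suffices to argue for one convenient chamber; and since $w$ is a glide reflection with axis $\ell$, it acts on $\ell$ as translation by its translation length $\mu$, so $[x,w(x)]$ is a fundamental domain for $\langle w\rangle$ acting on $\ell$, and because $\pi_C$ is $W$-invariant the set $\pi_C([x,w(x)])=\pi_C(\ell)$ is independent of $x$, as the statement implicitly claims.

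On each chamber $gC$ the map $\pi_C$ equals the isometry $g^{-1}$, and across the wall separating $gC$ from $gsC$ ($s\in S$) the two local expressions $g^{-1}$ and $(gs)^{-1}=sg^{-1}$ differ by post-composition with the reflection $s$, which fixes $\pi_C(\mathrm{wall})=H_s$. Hence $\pi_C$ takes the geodesic $\ell$ to a piecewise-geodesic curve in $\bar C$ that obeys the law of reflection at each wall-crossing, i.e.\ a billiard trajectory; let $\beta:=\pi_C([x,w(x)])$ be the resulting closed billiard loop. As $\pi_C$ is piecewise isometric and $\ell$ meets the walls in a discrete set, $\mathrm{length}(\beta)=d(x,w(x))=\mu$. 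Finally $\beta$ meets all three walls: if it avoided every wall of type $c$, then $\ell$ would avoid all $W$-translates of $H_c$ and hence lie in a single connected component of their complement, whose setwise stabilizer is a conjugate of the parabolic subgroup $W_{\{a,b\}}$; since $w$ preserves $\ell$ this would conjugate $w$ into $W_{\{a,b\}}$, contradicting that a Coxeter element is essential.

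To finish I would run a Fagnano-type unfolding. For any loop $\alpha\subseteq\bar C$ meeting the three walls, pick $p_a\in\alpha\cap H_a$, $p_b\in\alpha\cap H_b$, $p_c\in\alpha\cap H_c$; read in cyclic order along $\alpha$ they give $\mathrm{length}(\alpha)\ge d(p_a,p_b)+d(p_b,p_c)+d(p_c,p_a)$, the perimeter of the inscribed triangle $p_ap_bp_c$. Since $b$ and $c$ fix $p_b$ and $p_c$, this perimeter equals $d(b(p_a),p_b)+d(p_b,p_c)+d(p_c,c(p_a))\ge d(b(p_a),c(p_a))=d(p_a,bc(p_a))$, with equality precisely when $p_b,p_c$ lie on the segment $[b(p_a),c(p_a)]$; as $bc$ fixes the vertex $H_b\cap H_c$, the last quantity is minimized over $p_a\in H_a$ at the foot of the perpendicular from that vertex to $H_a$. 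Permuting the walls shows the minimal inscribed triangle is unique and is the pedal triangle $T$, and that $T$ is a closed billiard triangle. Developing $T$ by successive reflections straightens it to a complete geodesic invariant under a glide reflection conjugate to $w^{\pm1}$ (the walls crossed in one period are $H_a,H_b,H_c$, each once); that geodesic is therefore a $W$-translate of $\ell$, its translation length is $\mu$, and refolding via $\pi_C$ identifies $T$ with $\pi_C(\ell)=\beta$ and gives $\mathrm{perimeter}(T)=\mu$. Consequently every loop touching the three walls has length $\ge\mathrm{perimeter}(T)=\mathrm{length}(\beta)$, with equality forcing it to coincide with $T$; so $\beta=\pi_C([x,w(x)])$ is the unique shortest such loop and equals the pedal triangle.

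The main obstacle is the hyperbolic Fagnano step: one must verify that the critical inscribed triangle is a genuine, unique minimum and really is the pedal triangle, and deal with the degenerate Coxeter triangles — right angles (some $m_i=2$), where two altitude feet collide and $T$ becomes a doubled segment, and ideal vertices (some $m_i=\infty$), where $\bar C$ is noncompact and $bc$ is parabolic, so the existence of minimizers and of the perpendicular feet needs a separate argument (using that such an ideal vertex is not an endpoint of the opposite wall). One should also confirm that the Coxeter axis avoids the vertices of the reflection arrangement, so that the billiard picture is unambiguous.
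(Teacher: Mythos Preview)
Your argument is essentially correct and takes a genuinely different route from the paper. The paper's proof is almost entirely by citation: it invokes McMullen's results to produce \emph{some} Coxeter element $w'$ whose folded axis realizes the shortest closed billiard in $C$, then shows that all Coxeter elements share the same translation length (via source--sink flips and inversion), and finally uses McMullen's Proposition~4.3 (essentiality implies the billiard meets every wall) together with a one-line reference to the Euclidean Fagnano argument to identify the shortest billiard with the pedal triangle. By contrast, you work everything out directly: you give the billiard interpretation of $\pi_C$ and compute the length, you prove the ``meets all three walls'' step via the setwise stabilizer of a component of $\H^2\setminus\bigcup_g gH_c$ (your argument here is correct: the components are indexed by left cosets of $W_{\{a,b\}}$ and the stabilizer of each is a conjugate of $W_{\{a,b\}}$), and you carry out a hyperbolic Fagnano minimization to identify the pedal triangle as the unique shortest inscribed loop. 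Both approaches share one essential ingredient---that every Coxeter element has the same translation length as $w$---which you encode as ``conjugate to $w^{\pm1}$'' and the paper proves via the same cyclic-shift/inverse observation. Your version is longer but fully self-contained and geometrically transparent; the paper's version is a few lines given the literature.

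Two comments on the details. First, your Fagnano step is fine in the generic (acute, finite-vertex) case, and your flagged degenerate cases are genuine but not fatal: when some $m_i=\infty$ the vertex $v=H_b\cap H_c$ is ideal and $bc$ is parabolic, yet $d(p_a,bc(p_a))$ is still strictly minimized over $p_a\in H_a$ at the foot of the perpendicular from $v$ to $H_a$ (since $H_a$ does not end at $v$, the horocyclic coordinate is bounded on $H_a$), so the argument goes through; when some $m_i=2$ the pedal triangle collapses to a doubled altitude and the billiard picture is the expected back-and-forth through the right-angled corner, consistent with Corollary~3.3 of the paper. Second, the line ``that geodesic is therefore a $W$-translate of $\ell$'' is stronger than you need and not quite justified (conjugacy of all Coxeter elements in $W$ can fail); all you actually use is that the developed glide reflection has translation length $\mu$, which follows from it being conjugate to $w^{\pm1}$, and then uniqueness of the minimizer forces $\beta=T$.
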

\begin{proof}
    The orthic triangle of a triangle $C \subseteq \H^2$ is the (only) shortest loop that touches all three walls of $C$.
    This can be proved with the same argument as the one used in \cite[Chapter 6]{rademacher2018enjoyment} for the Euclidean case.

    By \cite[Theorem 4.1 and Proposition 4.2]{mcmullen2002coxeter}, there exists a Coxeter element $w' \in W$ such that $\pi_C([x', w'(x')])$ is the shortest loop that touches all three walls of $C$, where $x'$ is any point on the axis of $w'$.
    Note that the length of any such loop is equal to the translation length $\mu$ of $w'$.
    
    Every Coxeter element is conjugate in $W$ to one of $abc, bca, cab, cba, bac$, or $acb$ (where $\{a, b, c\}$ is a fixed set of simple reflections).
    The first three ($abc, bca$, and $cab$) can be obtained from each other via source-sink flips \cite[Lemma 7.4]{mccammond2015dual}, hence they are geometrically equivalent Coxeter elements and thus have the same translation length.
    The same holds for the last three ($cba, bac$, and $acb$). In addition, each of the first three is the inverse of one of the last three, so all Coxeter elements have translation lengths equal to $\mu$.
    In particular, the loop $\gamma = \pi_C([x, w(x)])$ has length $\mu$.
    Since $w$ is essential, \cite[Proposition 4.3]{mcmullen2002coxeter} ensures that $\gamma$ touches all three walls of $C$.
    Therefore, $\gamma$ is the shortest loop that touches the three walls of $C$.
\end{proof}

\begin{figure}
    \centering
    \begin{tikzpicture}[scale=3.6]
        \clip (0,0) circle (1);
        
        \draw[
            white,
            decoration={markings, mark=at position 0.52 with {\arrow[orange, thick]{>}}},
            postaction={decorate}
        ] (-142.629284:1) -- (9.629284:1);
        
        \draw[
            white,
            decoration={markings, mark=at position 0.7 with {\arrow[blue, thick]{>}}},
            postaction={decorate}
        ] (-30.47:1) -- (-183.23:1);
        
        \draw[
            white,
            decoration={markings, mark=at position 0.253 with {\arrow[purple, thick]{<}}},
            postaction={decorate}
        ] (-89.92:1) -- (113.77:1);
        
        \begin{scope}
            \clip (45:1) -- (225:1) -- (-45:1) -- cycle;
            \clip (90:1) -- (-90:1) -- (180:1) -- cycle;
            \clip (180:1) -- (317:1) -- (90:1) -- cycle;    %

            \begin{scope}[every path/.style={very thick,black!40}]
                \begin{scope}
                    \hgline{-130.429284}{-4.570716}{}
                \end{scope}
    
                \begin{scope}
                    \hgline{-49.570716}{-175.429284}{}
                \end{scope}
    
                \begin{scope}
                    \hgline{-94.570716}{139.570716}{}
            \end{scope}
            \end{scope}
        \end{scope}

        \begin{scope}[every path/.style={thick,dashed,orange}]
            \hgline{-130.429284}{-4.570716}{}
        \end{scope}
        
        \begin{scope}[every path/.style={thick,dashed,blue}]
            \hgline{-49.570716}{-175.429284}{}
        \end{scope}
    
        \begin{scope}[every path/.style={thick,dashed,purple}]
            \hgline{-94.570716}{139.570716}{}
        \end{scope}
        
        \input{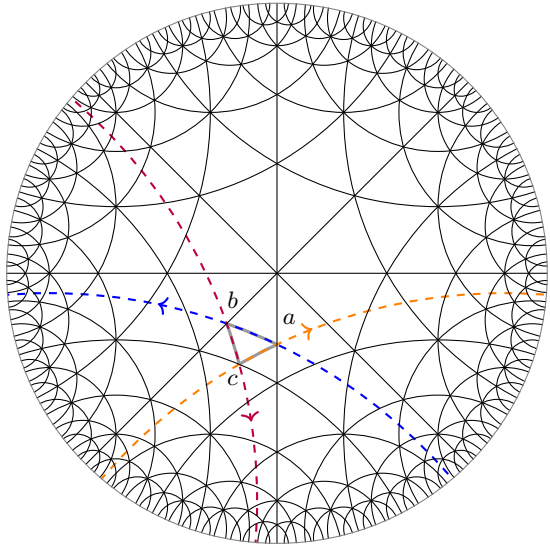}
        
        \draw[black!50, thick] (0,0) circle (1);
        
        \node at (0.046, -0.17) {\small $a$};
        \node at (-0.163, -0.096) {\small $b$};
        \node at (-0.165, -0.395) {\small $c$};
    \end{tikzpicture}
    \caption{Axes of the Coxeter elements $abc$ (in orange), $bca$ (in blue), and $cab$ (in purple).
    The orthic triangle of the chamber delimited by $\Fix(a), \Fix(b)$, and $\Fix(c)$ is highlighted in gray.}
    \label{fig:pedal-triangle}
\end{figure}

We now derive two useful corollaries: first, the Coxeter axis is not a reflection line (so axial chambers exist); second, every axial chamber induces a factorization of $w$.

\begin{corollary}
    The Coxeter axis $\ell$ does not coincide with any reflection line of $W$.
    In addition, if two reflection lines intersect $\ell$ in the same point, then they are perpendicular.
    \label{lemma:reflections-and-axis}
\end{corollary}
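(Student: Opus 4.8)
The plan is to deduce both assertions from Lemma \ref{lemma:pedal-triangle} together with the standard description of the point stabilizers of the $W$-action on $\H^2$. Fix a chamber $C$ as in that lemma, with vertices $v_1,v_2,v_3$ of angles $\pi/m_1,\pi/m_2,\pi/m_3$ (each at most $\frac{\pi}{2}$, since $m_i\ge 2$). Because $\pi_C$ is $W$-invariant and $[x,w(x)]$ is a fundamental domain for the action of $\langle w\rangle$ on $\ell$, we have $\pi_C(\ell)=\pi_C([x,w(x)])$, which by Lemma \ref{lemma:pedal-triangle} equals the pedal triangle $\gamma$ of $C$, i.e.\ the shortest loop touching all three walls of $C$. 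I will also use the following dictionary, which is just the classification of the groups $\mathrm{Stab}_W(p)$ up to conjugacy: an ordinary point $p\in\H^2$ lies on no reflection line, on exactly one reflection line, or on exactly $m_i$ reflection lines (these passing through $p$ with consecutive angles $\pi/m_i$) precisely when $\pi_C(p)$ lies in $\interior(C)$, in the relative interior of a wall of $C$, or equals the finite vertex $v_i$, respectively.

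For the first assertion, suppose $\ell=\Fix(r)$ for some $r\in R$. Then every point of $\ell$ lies on a reflection line, so the dictionary gives $\pi_C(\ell)\subseteq\partial C$. But $\pi_C(\ell)=\gamma$ is the shortest loop touching all three walls of $C$, and no such loop is contained in $\partial C$: its sides are geodesic chords crossing $\interior(C)$, and in the degenerate case (some angle of $C$ equal to $\frac{\pi}{2}$) it is the altitude issued from that right-angled vertex, traversed twice, which again crosses $\interior(C)$. This contradiction shows that $\ell$ is not a reflection line; in particular $\ell$ is not contained in the (locally finite) union of the reflection lines, so it meets the interior of a chamber and axial chambers exist.

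For the second assertion, let $H_{r_1}\ne H_{r_2}$ be reflection lines meeting $\ell$ at a common (ordinary) point $q$. Since $q$ lies on at least two reflection lines, $\mathrm{Stab}_W(q)$ has order larger than $2$, so by the dictionary $\pi_C(q)=v_i$ for some $i$ and exactly $m_i$ reflection lines pass through $q$, with consecutive angles $\pi/m_i$. Now $v_i=\pi_C(q)\in\pi_C(\ell)=\gamma$, so $\gamma$ passes through the vertex $v_i$ of $C$. But the shortest loop touching the three walls of $C$ meets a vertex of $C$ only when the angle there is at least $\frac{\pi}{2}$; since every angle of $C$ is at most $\frac{\pi}{2}$, the angle at $v_i$ must equal $\frac{\pi}{2}$, i.e.\ $m_i=2$. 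Hence exactly two reflection lines pass through $q$ — namely $H_{r_1}$ and $H_{r_2}$ — and they meet at the angle $\pi/m_i=\frac{\pi}{2}$.

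The geometric fact carrying both parts is that \emph{the shortest loop touching the three walls of a hyperbolic triangle does not pass through a vertex whose angle is $<\frac{\pi}{2}$} (equivalently, it passes through a vertex only at a right angle, where it is the doubled altitude). I expect this to follow from the same unfolding argument that underlies Lemma \ref{lemma:pedal-triangle} (cf.\ \cite[Chapter 6]{rademacher2018enjoyment}): reflecting a variable point of one wall across the other two, its two images are separated by the common vertex exactly when the angle there is $\ge\frac{\pi}{2}$, so for a smaller angle a genuine inscribed triangle is strictly shorter than the doubled altitude. Making this precise — including the right-angled and ideal-vertex shapes of $C$ — is the step to treat with care; granted it, the corollary follows as above.
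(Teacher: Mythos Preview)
Your argument is correct and follows essentially the same route as the paper's proof: both reduce the claims to properties of the pedal triangle via Lemma~\ref{lemma:pedal-triangle}, using that the pedal triangle touches each wall at exactly one point and passes through a vertex only when that vertex has a right angle. Your version is more explicit (spelling out the stabilizer dictionary and the unfolding argument for the pedal-triangle facts), whereas the paper simply asserts the relevant properties of pedal triangles and applies them directly.
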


\begin{proof}
    The orthic triangle of any chamber $C$ touches every wall at exactly one point. In addition, it touches two walls at the same point if and only if the two walls are perpendicular (in this case, the orthic triangle is degenerate since two vertices coincide).
    The statement then follows from \Cref{lemma:pedal-triangle}, since $\pi_C(\ell)$ is the orthic triangle of $C$.
\end{proof}

\begin{corollary}
    Suppose that $C$ is an axial chamber, with associated reflections $s_1,s_2,s_3$ in the order the sides of $C$ are touched by the loop $\pi_C([x, w(x)])$ for any $x \in \ell \cap C$.
    Then $w = s_1s_2s_3$.
    \label{lemma:axial-factorization}
\end{corollary}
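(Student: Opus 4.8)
The plan is to \emph{unfold} the pedal-triangle billiard loop of \Cref{lemma:pedal-triangle} and read the factorization off the sequence of walls that get crossed. Fix $x$ in the interior of the segment $\ell\cap C$. First I would note that, since $\pi_C$ restricts to the identity on $\bar C$, we have $\ell\cap C\subseteq\pi_C(\ell)$, and in fact $\ell\cap C$ is \emph{exactly one edge} of the pedal triangle $\pi_C(\ell)$: it is a maximal subsegment of $\ell$ contained in $\bar C$, it lies on the pedal triangle, and its endpoints are pedal-triangle vertices lying on walls of $C$. Consequently the cyclic order in which the loop $\pi_C([x,w(x)])$ meets the three walls of $C$, together with the orientation of $\ell$ and the base point $x$, determines a well-defined \emph{linear} order $s_1,s_2,s_3$ independent of the chosen $x\in\ell\cap C$; this is what legitimizes the notation in the statement. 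Moreover, by \Cref{lemma:pedal-triangle} and the first two sentences of the proof of \Cref{lemma:reflections-and-axis}, the loop meets each wall exactly once, so there are exactly three meeting points.

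Next I would track, along the geodesic $\gamma\colon[0,1]\to\ell$ from $\gamma(0)=x$ to $\gamma(1)=w(x)$, the chamber containing $\gamma(t)$, written as $g(t)\cdot C$ for the unique $g(t)\in W$ (chambers being in bijection with $W$ via $g\mapsto g(C)$). The function $g$ is constant between consecutive wall-crossings, with $g(0)=1$ since $x\in\interior(C)$ and $g(1)=w$ since $w(x)\in\interior(w(C))$. The key local computation is the folding rule: if $\gamma$ passes from the chamber $g(C)$ into the adjacent one through the wall $g(H)$, where $H$ is a wall of $C$ with reflection $s_H$, then the adjacent chamber equals $g s_H\cdot C$, while the crossing point folds under $\pi_C$ — that is, under $g^{-1}$ — onto the wall $H$ of $C$. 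Thus each crossing \emph{right}-multiplies $g$ by the reflection of the wall of $C$ that the folded loop touches at that moment; by the previous paragraph these reflections are $s_1$, then $s_2$, then $s_3$, and there are no further crossings. Hence $w=g(1)=s_1s_2s_3$, as claimed.

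The main obstacle is not conceptual but a matter of care: getting the folding rule with the correct side of multiplication (it is right multiplication by the reflection of the wall of the \emph{fixed} chamber $C$, obtained from the current wall $g(H)$ by conjugating the reflection $g s_H g^{-1}$ back by $g$, not left multiplication by the wall reflection of the current chamber), and checking that $\gamma$ meets the arrangement only in transverse one-point wall crossings — which uses precisely that $\ell$ is not a reflection line (\Cref{lemma:reflections-and-axis}), so it meets every wall transversally in at most one point, and that the pedal triangle has exactly three points on $\partial C$. One slightly degenerate situation must be handled separately: when $C$ has a right angle (some $m_i=2$) the pedal triangle degenerates and $\ell$ may pass through the corresponding vertex of the arrangement; there $\gamma$ continues straight through, which amounts to two consecutive crossings of the two perpendicular walls of $C$ meeting at that vertex, and since those two reflections commute the product $s_1s_2s_3$ is unaffected by the order in which we list them. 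Apart from this bookkeeping, the argument is uniform.
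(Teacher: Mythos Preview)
Your proof is correct and follows essentially the same unfolding/billiard idea as the paper's own proof. The paper is terser: it notes that the segment $[x,w(x)]$ crosses exactly three reflection lines, identifies them as $s_1$, $s_1s_2s_1$, $s_1s_2s_3s_2s_1$, and concludes $w=(s_1s_2s_3s_2s_1)(s_1s_2s_1)s_1=s_1s_2s_3$; your tracking of the chamber label $g(t)$ and right-multiplying by the simple reflection at each crossing is the same computation viewed from the other side (both encode that the chamber sequence is $C,\,s_1C,\,s_1s_2C,\,s_1s_2s_3C=wC$). Your handling of the right-angle degeneracy, where $\ell$ may pass through a vertex and two perpendicular walls are crossed simultaneously, is a careful point that the paper leaves implicit.
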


\begin{proof}
    Let $x \in C \cap \ell$.
    Now, $\pi_C(x)=\pi_C(w(x))$, and no other point in $\pi_C(\ell)$ is in the same $W$-orbit. By \Cref{lemma:pedal-triangle}, the segment of $[x, w(x)] \subseteq \ell$ intersects exactly three reflection lines, and the corresponding reflections are $s_1$, $s_1s_2s_1$, and $s_1s_2s_3s_2s_1$.
    Therefore $w = (s_1s_2s_3s_2s_1)(s_1s_2s_1) s_1 = s_1s_2s_3$.
\end{proof}

We close this section with a geometric observation on axial vertices which is exemplified in \Cref{fig:coxeter-axis}.

\begin{lemma}
    Let $p$ be an axial vertex.
    Every axial chamber has exactly one vertex in the set $\{w^j(p)\mid j \in \Z\}$.
    \label{lemma:axial-vertices-orbits}
\end{lemma}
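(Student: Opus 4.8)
The plan is to first pin down how the Coxeter element $w$ permutes the axial chambers, and then deduce the statement about vertices by a direct counting argument.

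Since $w$ is a glide reflection along $\ell$, it acts on $\ell$ as the translation by the translation length $\mu$ and, in particular, preserves the orientation of $\ell$. Because the reflection arrangement is locally finite, $\ell$ meets only finitely many chambers in any bounded region, so we may enumerate the axial chambers as $(C_n)_{n\in\Z}$ in the order in which $\ell$ passes through them. As $w$ preserves $\ell$ together with its orientation and permutes the axial chambers, there is a fixed integer $k\ge 1$ with $w(C_n)=C_{n+k}$ for all $n$. Using \Cref{lemma:pedal-triangle} and \Cref{lemma:axial-factorization} (the segment $[x,w(x)]$ crosses exactly three reflection lines), one computes $k=3$ in the generic case; in the right-angled case (one label $m_i=2$), \Cref{lemma:reflections-and-axis} shows that $\pi_C(\ell)$ touches two walls at their common right-angle vertex, so $\ell$ runs straight through the (finite) right-angle vertices and one computes $k=2$.

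I would then reduce the lemma to the claim that \emph{every axial vertex lies in exactly $k$ consecutive axial chambers}. Granting this, if $p$ is a vertex of $C_a,\dots,C_{a+k-1}$, then $w^j(p)$ is a vertex of exactly $C_{a+jk},\dots,C_{a+jk+k-1}$, and since the integer intervals $[a+jk,\,a+jk+k-1]$ partition $\Z$ as $j$ ranges over $\Z$, each $C_n$ contains exactly one vertex of the orbit $\{w^j(p)\mid j\in\Z\}$. (The elements $w^j(p)$ are pairwise distinct: $\langle w\rangle$ acts freely on the set of chamber vertices, as finite vertices have finite stabilizers and no axial vertex is an endpoint of $\ell$ — otherwise it would lie in infinitely many axial chambers.)

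To prove the claim, first observe that the axial chambers containing a fixed axial vertex $p$ form a consecutive block $C_a,\dots,C_b$: the chambers around $p$ form a fan, and the geodesic $\ell$ — missing $p$ in the generic case and passing straight through it in the right-angled case — meets a consecutive sub-fan. The crux is that $b-a+1=k$. I would establish the upper bound $b-a+1\le k$ first: if some vertex $q$ lay in $k+1$ axial chambers, then $\ell$ would cross $k$ reflection lines through $q$, and these crossings would be consecutive along $\ell$ (no reflection line is crossed while $\ell$ is in the interior of a chamber); tracking the $k$ corresponding reflection lines through the factorization of one or two consecutive $w$-periods, as in the proof of \Cref{lemma:axial-factorization}, forces the three walls of some axial chamber to be concurrent, which is impossible since an axial chamber is a genuine (possibly ideal) triangle. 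Granting the upper bound, consecutive axial chambers can never share the vertex cut off by $\ell$, so in the generic case every axial vertex is the cut-off vertex of a unique axial chamber $C_n$ — hence lies on its entry and exit walls and so in precisely $C_{n-1},C_n,C_{n+1}$ — and the right-angled case is entirely analogous with the right-angle vertices in the role of cut-off vertices and $k=2$. The hard part will be this upper bound, i.e.\ showing that $k$ consecutive reflection lines crossed by $\ell$ are never concurrent; I expect it to require only a short case analysis according to how those crossings are distributed among the $w$-periods, each case collapsing to the non-concurrency of the three walls of a single axial chamber.
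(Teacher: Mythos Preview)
Your approach is different from the paper's and considerably more elaborate. The paper argues by direct induction on consecutive axial chambers $C, C'$: writing $w=s_1s_2s_3$ as in \Cref{lemma:axial-factorization} (with $\Fix(s_1)$ the wall separating $C$ from $C'$), the unique vertex $q'$ of $C'=s_1(C)$ not shared with $C$ equals $s_1(q)=s_1s_2s_3(q)=w(q)$, where $q$ is the vertex of $C$ opposite $\Fix(s_1)$ (hence fixed by $s_2$ and $s_3$). The case where $C,C'$ are separated by two perpendicular walls is handled the same way with $C'=s_1s_2(C)$. This shows at once that the three $\langle w\rangle$-orbits of vertices coincide for all axial chambers, and the fundamental-domain observation (at most one orbit representative per chamber) finishes the proof. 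No global counting, no cut-off vertices, and no case split on the period $k$ are needed.

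Your counting reduction is sound in principle, and your upper-bound sketch works (if $q$ lay in $k+1$ consecutive axial chambers, the $k$ walls crossed by $\ell$ in between would all contain $q$, and unwinding the factorization forces $s_1,s_2,s_3$ all to fix $q$, so $w(q)=q$, impossible since $w$ is essential). But the lower-bound step has a real gap: from ``consecutive cut-off vertices are distinct'' you jump to ``\emph{every} axial vertex is the cut-off vertex of some chamber'', and this surjectivity is not automatic. It does follow---once consecutive cut-off vertices differ, the two endpoints of the wall between $C_{n-1}$ and $C_n$ are forced to be the cut-off vertices of $C_{n-1}$ and of $C_n$, so every vertex of $C_n$ is the cut-off vertex of one of $C_{n-1},C_n,C_{n+1}$---but you should spell this out. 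Also, in the right-angled case $\ell$ passes \emph{through} the right-angle vertices rather than cutting any vertex off, so the cut-off framework is not ``entirely analogous'' and needs genuine adaptation (though the direct check that each vertex lies in exactly two axial chambers is easy).
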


\begin{proof}
    Let $C$ be an axial chamber.
    Since $\bar C$ is a fundamental domain for the action of $W$ on $\H^2$, at most one vertex of $C$ belongs to the orbit $\{w^j(p)\mid j \in \Z\}$.
    In particular, the three vertices of $C$ necessarily belong to three different orbits.
    
    Let $C'$ be another axial chamber.
    By induction on the number of axial chambers between $C$ and $C'$, we prove that the orbits of the three vertices of $C$ are the same as the orbits of the three vertices of $C'$.
    It is enough to consider the case where $C'$ is the axial chamber immediately above $C$.
    Denote by $s_1, s_2, s_3$ the reflections with respect to the walls of $C$, ordered as in \Cref{lemma:axial-factorization}, so that $w = s_1s_2s_3$.
    
    \emph{Case 1:} suppose that $C$ and $C'$ are separated by a single reflection line, $\Fix(s_1)$.
    Then $C' = s_1(C)$.
    The vertex $q'$ of $C'$ opposite to $\Fix(s_1)$ is equal to $s_1(q) = w(q)$ where $q$ is the vertex of $C$ opposite to $\Fix(s_1)$.
    The other two vertices are in common between $C$ and $C'$.
    See for example \Cref{fig:coxeter-axis}, left.
    
    \emph{Case 2:} suppose that $C$ and $C'$ are separated by two reflection lines, $\Fix(s_1)$ and $\Fix(s_2)$, which are orthogonal by \Cref{lemma:reflections-and-axis}.
    Then $C' = s_1s_2(C)$.
    The vertices $q_1'$ and $q_2'$ of $C'$ opposite to $\Fix(s_1)$ and $\Fix(s_2)$ are equal to $s_1s_2(q_1) = w(q_1)$ and $s_1s_2(q_2) = w(q_2)$, where $q_1$ and $q_2$ are the vertices of $C$ opposite to $\Fix(s_1)$ and $\Fix(s_2)$.
    The third vertex is in common between $C$ and $C'$.
        See for example \Cref{fig:coxeter-axis}, right.

    In all cases, the vertices of $C$ and the vertices of $C'$ belong to the same three orbits.
\end{proof}

\subsection{Reflections}

Let $r$ be a reflection in the noncrossing partition poset $[1, w]$.
By analogy with the affine case, we say that $r$ is \emph{vertical} if $\Fix(r)$ intersects the Coxeter axis $\ell$ and \emph{horizontal} if it does not.

\begin{remark}
    If $r$ is a horizontal reflection, then $\Fix(r)$ does not intersect $\ell$ at infinity, because the distance between $\Fix(r)$ and $\ell$ is bounded away from $0$.
    Indeed, we can fix a point $x \in \ell$ and find an $\epsilon$-neighborhood $N$ of the closed segment $[x, w(x)]$ which intersects no fixed lines of horizontal reflections; then the union of all $w^k(N)$ for $k \in \Z$ is an $\epsilon$-neighborhood of $\ell$ with the same property.
    \label{rmk:reflections-distance}
\end{remark}

Differently from the affine case, the roots corresponding to horizontal reflections are not necessarily orthogonal to the Coxeter plane.
In fact, $[1,w]$ contains infinitely many horizontal reflections.
However, we now show that $wr$ is a rotation if $r$ is vertical and a translation if $r$ is horizontal (as in the affine case).

\begin{lemma}\label{lem:reflections-vh}
    If $r$ is vertical, then $wr$ (resp.\ $rw$) is a rotation around an axial vertex $p$ (possibly at infinity).
    Specifically, $p$ is the vertex opposite to $\Fix(r)$ in the axial chamber immediately above (resp.\ below) $\Fix(r) \cap \ell$.
    If $r$ is horizontal, then $wr$ (resp.\ $rw$) is a translation whose axis meets the Coxeter axis $\ell$ with an angle $< \frac{\pi}{2}$.
    \label{lemma:reflections}
\end{lemma}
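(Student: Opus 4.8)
The plan is to split along the vertical/horizontal dichotomy, dispatch the horizontal case immediately with Lemma~\ref{lemma:translations}, and handle the vertical case by exhibiting a concrete point fixed by $wr$. As a preliminary observation, $w$ is a glide reflection and $r$ a reflection, so both $wr$ and $rw$ are orientation-preserving, and since $w\neq r$ they are nontrivial; hence each of them is an elliptic rotation, a parabolic rotation, or a translation, and it suffices to decide which and to locate the fixed point (resp.\ axis). If $r$ is horizontal, then $\Fix(r)$ is disjoint from $\ell$ in $\H^2$ and, by Remark~\ref{rmk:reflections-distance}, also at infinity; thus $w$ and $r$ satisfy the hypotheses of Lemma~\ref{lemma:translations}, which gives directly that $wr$ and $rw$ are translations whose (oriented) axes meet $\ell$ at an angle $<\tfrac{\pi}{2}$. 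This is the entire horizontal case.

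For the vertical case, set $x=\Fix(r)\cap\ell$; a reflection in $[1,w]$ whose fixed line avoids $\ell$ in $\H^2$ also avoids it at infinity (Remark~\ref{rmk:reflections-distance}), so $x$ is an honest point of $\H^2$. Let $C_-$ and $C_+$ be the axial chambers immediately below and above $x$, and let $q$ and $p$ be the vertices of $C_-$ and $C_+$ opposite $\Fix(r)$ (possibly ideal). Since $\ell$ leaves $C_-$ upward through $\Fix(r)$, the wall $\Fix(r)$ is the first one of $C_-$ met by the pedal loop, so in the notation of Corollary~\ref{lemma:axial-factorization} we have $w=r\,s_2 s_3$ with $q\in\Fix(s_2)\cap\Fix(s_3)$; when $x$ is a vertex of the tessellation, Corollary~\ref{lemma:reflections-and-axis} forces the two walls through $x$ to be perpendicular and one argues identically with $w=r\,r'\,s_3$. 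Then, exactly by the computation in the proof of Lemma~\ref{lemma:axial-vertices-orbits}, one obtains $p=w(q)$ and $r(p)=q$, whence $wr(p)=w(q)=p$. So $wr$ fixes the vertex $p$: if $p\in\H^2$, then $wr$ is a nontrivial elliptic rotation about $p$; if $p$ is ideal, then $wr$ lies in the stabilizer of $p$ in $W$, which is generated by the reflections fixing $p$ and hence contains no hyperbolic isometry (orientation-preserving products of such reflections are parabolic), so $wr$ is parabolic, i.e.\ a rotation about the point $p$ at infinity. Interchanging the roles of $C_-$ and $C_+$ in this argument shows that $rw$ fixes the vertex of $C_-$ opposite $\Fix(r)$, which is the companion statement.

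I expect the vertical case to be the only real work: the delicate part is the bookkeeping that matches $\Fix(r)$ with the ``first'' wall of $C_-$ in the pedal ordering of Corollary~\ref{lemma:axial-factorization} and treats uniformly the transversal crossing and the crossing through a right-angled vertex of the tessellation. Once $wr$ has been pinned to a vertex of $C_+$, the remaining subtlety is ruling out that $wr$ is a translation with $p$ among its two ideal fixed points; this is excluded because $W$ is discrete and a discrete group of isometries of $\H^2$ cannot contain a hyperbolic element and a parabolic element with a common fixed point, while $p$ (being an ideal vertex of the arrangement) is fixed by parabolic elements of $W$.
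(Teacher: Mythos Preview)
Your proof is correct, and the horizontal case is handled exactly as in the paper. For the vertical case, however, the paper takes a shorter path: it applies Corollary~\ref{lemma:axial-factorization} to the chamber $C_+$ immediately \emph{above} $\Fix(r)\cap\ell$ rather than to $C_-$. There $\Fix(r)$ is the \emph{last} wall in the pedal ordering, so one obtains $w=r_1r_2r$ and hence $wr=r_1r_2$ is visibly a rotation about $\Fix(r_1)\cap\Fix(r_2)=p$, with no further analysis needed. Your factorization $w=rs_2s_3$ from $C_-$ in fact gives the companion statement $rw=s_2s_3$ (a rotation about $q$) in one stroke, so the two chambers $C_\pm$ are dual to the two claims $wr$, $rw$. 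By pairing $C_-$ with $wr$ you are led through the detour via Lemma~\ref{lemma:axial-vertices-orbits} to pin down $p=w(q)$, and you must then separately exclude that $wr$ is hyperbolic when $p$ is ideal. Your discreteness argument for this last point is valid, but the paper's choice of chamber renders it unnecessary: a product of two reflections in lines meeting at $p$ (even at infinity) is automatically elliptic or parabolic.
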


\begin{proof}
    If $r$ is horizontal, then the statement follows by \Cref{lemma:translations}.
    Suppose from now on that $r$ is vertical.
    By \Cref{lemma:reflections-and-axis}, the intersection point $\Fix(r) \cap \ell$ is fixed by at most one other reflection $r' \in W$ and, if this happens, the two reflection lines are perpendicular.
    Therefore, the axial chamber $C$ immediately above $\Fix(r) \cap \ell$ has $\Fix(r)$ as one of its walls.
    By \Cref{lemma:axial-factorization}, the walls of $C$ yield a factorization $w = r_1 r_2 r$.
    Therefore $wr = r_1r_2$ is a rotation around the vertex $\Fix(r_1) \cap \Fix(r_2)$ of $C$ opposite to $\Fix(r)$.
    Note that this vertex might be at infinity (if there is no relation between $r_1$ and $r_2$) and in such case, $wr$ is a parabolic isometry.
\end{proof}

\subsection{Rotations and translations}
Next, we give further insights on rank-two elements of the noncrossing partition poset $[1,w]$, namely, rotations and translations.

\begin{lemma}[Rotations]
    \label{lemma:rotations}
    Let $u \in [1, w]$ be a rotation around a point $p \in \H^2 \cup \partial \H^2$.
    Then $u$ is a Coxeter element for the parabolic subgroup $W_u \subseteq W$ that fixes $p$.
    In addition, a point $p \in \H^2 \cup \partial \H^2$ is the fixed point of a rotation $u \in [1, w]$ if and only if $p$ is an axial vertex.
\end{lemma}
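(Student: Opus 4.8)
The plan is to read the lemma off from \Cref{lemma:reflections} (which describes $wr$ and $rw$ for a vertical reflection $r$) and \Cref{lemma:axial-factorization} (the axial factorizations of $w$), together with the standard fact that the $W$-stabilizer of a vertex of a chamber equals the parabolic subgroup generated by the reflections in the two walls of the chamber through that vertex (see \cite[Ch.~V]{bourbaki1968elements}, \cite[\S5.13]{humphreys1992reflection}). So suppose $u \in [1,w]$ is a rotation around $p$. Since a rotation has reflection length $2$, the element $r := u^{-1}w$ is a reflection and $u = wr$; moreover $r \in [1,w]$, because $rw = rur^{-1}$ is conjugate to $u$, so $l_R(rw) = 2$ and $l_R(r) + l_R(r^{-1}w) = 1 + 2 = 3 = l_R(w)$. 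Now $u = wr$ is a rotation, not a translation, so by \Cref{lemma:reflections} the reflection $r$ cannot be horizontal; hence $r$ is vertical, and \Cref{lemma:reflections} (using \Cref{lemma:axial-factorization}) identifies $u = wr$ with the product $s_1 s_2$ of the reflections in the two walls $\neq \Fix(r)$ of the axial chamber $C$ immediately above $\Fix(r) \cap \ell$, and identifies $p$ with the vertex $\Fix(s_1) \cap \Fix(s_2)$ of $C$ opposite to $\Fix(r)$. In particular $p$ is a vertex of an axial chamber, i.e., an axial vertex; and $W_u := \mathrm{Stab}_W(p) = \langle s_1, s_2 \rangle$ by the quoted fact. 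This is a dihedral parabolic subgroup of $W$ having $\{s_1, s_2\}$ as a set of simple reflections, so $u = s_1 s_2$ is one of its two Coxeter elements, as claimed.

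For the ``if and only if'', the forward implication has just been shown. For the converse, let $p$ be a vertex of an axial chamber $C$ and write $w = s_1 s_2 s_3$ as in \Cref{lemma:axial-factorization}, with $\Fix(s_1), \Fix(s_2), \Fix(s_3)$ the walls of $C$ in pedal-triangle order. Since $p$ is a vertex of the triangle $C$, it lies on exactly two of these walls, say $\Fix(s_i)$ and $\Fix(s_j)$ with $i < j$; put $u := s_i s_j$. As $s_i \neq s_j$ and $\Fix(s_i) \cap \Fix(s_j) = \{p\}$, the element $u$ is a rotation around $p$ (parabolic if $p$ is ideal), so $l_R(u) = 2$. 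A direct computation shows that $u^{-1}w$ is a reflection in each of the three cases: $u^{-1}w = s_3$ when $\{i,j\} = \{1,2\}$; $u^{-1}w = s_3 s_2 s_3$ when $\{i,j\} = \{1,3\}$; and $u^{-1}w = s_3 s_2 s_1 s_2 s_3$ when $\{i,j\} = \{2,3\}$. Hence $l_R(u) + l_R(u^{-1}w) = 2 + 1 = 3 = l_R(w)$, so $u \in [1,w]$, and $p$ is the fixed point of a rotation in $[1,w]$, completing the proof.

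I expect the only point needing real care to be the identification $W_u = \langle s_1, s_2 \rangle$ --- that is, that no reflection of $W$ whose fixed line passes through $p$ lies outside $\langle s_1, s_2 \rangle$ --- and in particular the degenerate case $m(s_1, s_2) = \infty$, in which $p \in \partial\H^2$, the subgroup $\langle s_1, s_2 \rangle$ is infinite dihedral, and $u$ is parabolic. Both are instances of the standard analysis of face stabilizers on the Tits cone. Everything else reduces to routine reflection-length bookkeeping, together with the elementary fact that the product of two distinct reflections of $\H^2$ whose fixed lines meet (in $\H^2$ or at infinity) is a rotation about their common point.
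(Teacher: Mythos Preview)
Your proof is correct and follows essentially the same approach as the paper's. The only cosmetic difference is that you take the right complement $r = u^{-1}w$ (so $u = wr$ and you work with the axial chamber immediately \emph{above} $\Fix(r)\cap\ell$), whereas the paper takes the left complement $r = wu^{-1}$ (so $u = rw$ and works with the chamber immediately \emph{below}); both choices lead to the same factorization $u = r_1r_2$ via \Cref{lemma:axial-factorization}, and your explicit case-check that $s_is_j \in [1,w]$ for all three pairs is exactly what the paper asserts in one line.
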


\begin{proof}
    Let $r = w u^{-1}$ be the left complement of $u$.
    Then $r$ is a vertical reflection by \Cref{lemma:reflections} and $p$ is an axial vertex.
    Let $C$ be the axial chamber immediately below $\Fix(r)$.
    Note that $\Fix(r)$ is a wall of $C$ by \Cref{lemma:reflections-and-axis}.
    By \Cref{lemma:axial-factorization}, there is a factorization $w=rr_1r_2$ associated with the walls of $C$, and $r$ comes first.
    Therefore $u = r_1r_2$.
    We finish the proof of the first part of the statement by noting that $\{ r_1, r_2 \}$ is a set of simple reflections for the parabolic subgroup $W_u$ that fixes $p$.
    
    It remains to show that every axial vertex $p$ is the fixed point of a rotation $u \in [1, w]$.
    Let $C$ be an axial chamber having $p$ as one of its vertices.
  	By \Cref{lemma:axial-factorization}, there is a factorization $w = s_1s_2s_3$ where $s_1, s_2, s_3$ are the reflections with respect to the walls of $C$.
  	The three rotations $s_1s_2, s_2s_3, s_1s_3$ are all in $[1,w]$ and one of them is a rotation around $p$.
\end{proof}

The case of translations (given by the following lemma) is less trivial. Our proof relies on the fact that Coxeter elements minimize the spectral radius among all essential elements of $W$.

\begin{lemma}[Translations]
    \label{lemma:reflections-below-translation}
    Let $t \in [1, w]$ be a translation.
    A reflection $r \in W$ is in $[1, t]$ if and only if $\Fix(r)$ is orthogonal to $\Min(t)$.
    In addition, $t$ is a Coxeter element for the type-$\tilde A_1$ subgroup $W_t \subseteq W$ generated by the reflections below $t$.
\end{lemma}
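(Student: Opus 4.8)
The plan is to describe $W_t$ explicitly as an infinite dihedral group and then, via a spectral radius argument, to rule out the possibility that $t$ is not one of its Coxeter elements. First I would analyse the left complement $r_0 := t^{-1}w$. Since $l_R(t) = 2$ and $l_R(w) = 3$, the element $r_0$ is a reflection, with $w = tr_0$ and $t = wr_0$; moreover $r_0w = r_0tr_0$ is a conjugate of $t$, so $l_R(r_0w) = 2$ and hence $r_0 \in [1,w]$. Since $t = wr_0$ is a translation rather than a rotation, \Cref{lemma:reflections} forces $r_0$ to be horizontal, and by \Cref{rmk:reflections-distance} the line $\Fix(r_0)$ then does not meet the Coxeter axis $\ell$ even at infinity. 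Write $\gamma$ for the oriented axis of $t$. Applying \Cref{lemma:translations} to the glide reflection $w$ and the reflection $r_0$, the line $\Fix(r_0)$ meets $\gamma$, and the crossing is not orthogonal: otherwise $r_0t$ would be the composite of a translation along $\gamma$ with a reflection in a line perpendicular to $\gamma$, hence a reflection, contradicting that $r_0t = r_0wr_0$ is a glide reflection.

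Next I would pin down $W_t$. For $r \in R$ one has $r \le t$ iff $l_R(rt) = 1$ iff $rt \in R$, and $rt \in R$ iff $\Fix(r) \perp \gamma$: if $\Fix(r) \perp \gamma$ then $rt$ is the composite of a reflection in a line perpendicular to $\gamma$ with a translation along $\gamma$, hence a reflection; conversely, if $rt \in R$ then $t = r\,(rt)$ exhibits the translation $t$ as a product of two reflections, so $\Fix(r)$ and $\Fix(rt)$ are ultraparallel with common perpendicular $\gamma$. Thus $W_t$ is generated by the reflections of $W$ whose fixed lines are perpendicular to $\gamma$; these lines are pairwise ultraparallel with common perpendicular $\gamma$, and since $W$ is discrete their feet on $\gamma$ form a discrete set $F$, stable under reflection across each of its points (because $r, r' \le t$ implies $rr'r \le t$). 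As $|F| \ge 2$ — write $t = r_1r_2$ with $r_1, r_2 \le t$ — the set $F$ is an infinite arithmetic progression, $W_t$ is the infinite dihedral group (of type $\tilde A_1$), and its simple systems are exactly the pairs of reflections in consecutive lines of $F$. So it remains to show that no fixed line of a reflection below $t$ lies strictly between $\Fix(r_1)$ and $\Fix(r_2)$; this is precisely the statement that $t = r_1r_2$ is a Coxeter element of $W_t$.

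Finally, suppose for contradiction that some $r_3 \le t$ has $\Fix(r_3)$ strictly between $\Fix(r_1)$ and $\Fix(r_2)$, and put $t' := r_1r_3$. Then $t'$ is a translation along $\gamma$ in the same direction as $t$ (since $\Fix(r_3)$ meets $\gamma$ between $\Fix(r_1)\cap\gamma$ and $\Fix(r_2)\cap\gamma$) with $0 < \mu(t') < \mu(t)$, where $\mu(\cdot)$ denotes translation length. Consider $w' := t'r_0 \in W$. Passing to the half-plane model with $\gamma = i\R_+$, oriented so that its positive direction points to $i\infty$, write $r_0$ as in \eqref{eq:reflection} with parameters $\alpha, \rho$; the obliqueness of $\Fix(r_0)\cap\gamma$ is exactly the condition $\alpha \ne 0$. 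If $T_s$ is the translation of length $s$ along $\gamma$ in the positive direction, then, normalizing the determinant to $-1$, one computes $|\mathrm{tr}(T_sr_0)| = \tfrac{2|\alpha|}{\rho}\sinh(s/2)$, which is strictly increasing in $s > 0$; moreover $T_sr_0$ is orientation-reversing with nonzero trace, hence a glide reflection, and since the spectral radius of such an element of $\PGL(2,\R)$ equals $\big(|\mathrm{tr}| + \sqrt{\mathrm{tr}^2 + 4}\big)^2/4$ — a strictly increasing function of $|\mathrm{tr}|$ — the translation length $\mu(T_sr_0)$ is strictly increasing in $s$. As $t = T_{\mu(t)}$ and $t' = T_{\mu(t')}$, the glide reflection $w' = t'r_0$ has $\mu(w') < \mu(w)$, i.e.\ strictly smaller spectral radius than $w$. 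But $w'$ is essential: a glide reflection is not conjugate into any proper parabolic subgroup, since the proper parabolics of a rank-three Coxeter group are (finite or infinite) dihedral, whose orientation-reversing elements are all reflections. This contradicts the fact that the Coxeter element $w$ minimizes the spectral radius among essential elements of $W$ \cite{mcmullen2002coxeter}.

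I expect the monotonicity of $s \mapsto \mu(T_sr_0)$ to be the main obstacle: it records the geometric fact that pre-composing a fixed oblique reflection with longer translations along $\gamma$ strictly lengthens the resulting glide reflection, and the cleanest route I see is the trace computation above together with the trace description of the spectral radius recalled in \Cref{sec:half-plane-model}. A secondary point demanding care is the structural description of $W_t$, where both inclusions — every reflection below $t$ is perpendicular to $\gamma$, and conversely these perpendicular reflections exhaust the reflections of $W_t$ — are needed so that ``lying strictly between $\Fix(r_1)$ and $\Fix(r_2)$'' is a well-defined, exhaustive condition.
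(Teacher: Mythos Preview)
Your argument is correct and follows essentially the same route as the paper: take the complement reflection of $t$, pass to the half-plane model with $\Min(t)$ as the imaginary axis, show that the spectral radius of the product with a translation along this axis is strictly increasing in the translation length, and conclude via McMullen's minimality theorem. The paper uses the left complement $r$ (with $w = rt$) and computes the eigenvalue ratio directly from the characteristic polynomial, whereas you use the right complement $r_0$ (with $w = tr_0$; note that what you call the ``left complement'' is in fact the right complement in the paper's terminology) and argue via the trace formula --- but these are cosmetic differences. You also supply two details the paper leaves implicit: the explicit characterization of $W_t$ as the group generated by the reflections perpendicular to $\gamma$, and the argument that $w'$ is essential because proper parabolics are dihedral and contain no glide reflections.
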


\begin{proof}
    The product of two reflections $r, r'$ in the hyperbolic plane is a translation if and only if $\Fix(r)$ and $\Fix(r')$ do not meet (not even at infinity); when $t = rr'$ is a translation, the translation axis $\Min(t)$ is orthogonal to both $\Fix(r)$ and $\Fix(r')$.
    In particular, if $t \in [1,w]$ is a translation as in the statement, $\Fix(r)$ is orthogonal to $\Min(t)$ for any reflection $r \in [1, t]$. %
    Conversely, if $r\in W$ is a reflection such that $\Fix(r)$ is orthogonal to $\Min(t)$, then $r' = rt \in W$ is also a reflection and thus $r, r' \in [1, t]$.
    This proves the first part of the statement.

    The reflections below $t$ form an infinite discrete sequence $\dotsc, r_{-1}, r_0, r_1, r_2, \dotsc$ ordered according to the positions of $\Fix(r_i) \cap \Min(t)$ along the oriented translation axis $\Min(t)$.
    We need to prove that $t = r_{i+1}r_i$ for any (or equivalently all) $i \in \Z$.
    Suppose that $t = r_jr_i$ for some $j > i$.
    
    Let $r$ be the left complement of $t$, so that $w = rt$.
    Let $t_\lambda$ be the translation with the same translation axis as $t$ but with spectral radius changed to an arbitrary $\lambda > 1$ (i.e., the translation length is $\log \lambda$).
    Up to a change of coordinates in the half-plane model, we have
    \[
    t_\lambda = \begin{mymatrix}{cc}
    \lambda & 0 \\ 0 & 1
    \end{mymatrix}.
    \]
    Writing the reflection $r$ as in \eqref{eq:reflection}, we can express $rt_\lambda$ as 
    \begin{equation}
        rt_\lambda = \begin{mymatrix}{cc}
        \lambda\alpha & \rho^2 - \alpha^2 \\ \lambda & -\alpha
        \end{mymatrix}
        \label{eq:hyperbolic-isometry}        
    \end{equation}
    for some $\rho > |\alpha| > 0$.
    This is an isometry of the hyperbolic plane with an odd reflection length. It is not a reflection because $\Fix(r)$ is not orthogonal to the translation axis of $t_\lambda$, so it is a glide reflection.
    The spectral radius of $rt_\lambda$ is given by the absolute value of the ratio of the two eigenvalues of the matrix \eqref{eq:hyperbolic-isometry}.
    The characteristic polynomial of \eqref{eq:hyperbolic-isometry} is $t^2 - (\lambda-1)\alpha t - \lambda \rho^2$. The absolute value of the ratio between the larger and the smaller eigenvalue is
    \[
        \left( \frac{\sqrt{(\lambda-1)^2 + 4\lambda\beta} + (\lambda-1)}{2\sqrt{\lambda\beta}} \right)^2
    \]
    where $\beta = \rho^2 / \alpha^2 > 1$.
    This quantity is strictly increasing in $\lambda$ for $\lambda \geq 1$ (and it is equal to $1$ for the degenerate case $\lambda = 1$ where $t_\lambda$ becomes the identity). Thus the spectral radius of $rt_\lambda$ increases with the translation length of $t_\lambda$.
    
    Recall that $t=r_jr_i$ and suppose for the sake of contradiction that $j > i+1$. Then the translation length of $t' = r_{i+1}r_i$ is strictly smaller than the translation length of $t$.
    Therefore, the spectral radius of $w' = rt'$ is strictly smaller than the spectral radius of $w = rt$ (and strictly greater than $1$).
    As noted in the proof of \Cref{lemma:pedal-triangle}, all the Coxeter elements of $W$ have the same translation length and thus the same spectral radius.
    We reach a contradiction because $w'$ is essential and, by \cite[Theorem 4.1]{mcmullen2002coxeter}, the Coxeter elements minimize the spectral radius among all essential elements of $W$.
\end{proof}

The previous two lemmas yield the following analog of \cite[Theorem 3.22]{paolini2021proof}.

\begin{theorem}
    For every element $u \in [1,w]$, we have that $u$ is a Coxeter element for the subgroup of $W$ generated by the reflections below $u$.
    \label{thm:coxeter-elements-below-w}
\end{theorem}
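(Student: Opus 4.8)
The plan is to proceed by cases according to the reflection length $l_R(u)$, which satisfies $l_R(u)\le l_R(w)=3$ for every $u\in[1,w]$, so that $l_R(u)\in\{0,1,2,3\}$; in each case I would match $u$ against the geometric classification of isometries of $\H^2$ recalled in \Cref{sec:hyperbolic-plane} and invoke the lemmas just proved. The two base cases are immediate: $l_R(u)=0$ means $u=1$, the (empty) Coxeter element of the trivial group, and $l_R(u)=1$ means $u$ is a reflection $r$, the Coxeter element of $\langle r\rangle$.

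For $l_R(u)=2$, the isometry $u$ is either a translation or a rotation about a point $p\in\H^2\cup\partial\H^2$ (a parabolic isometry counting as a rotation about an ideal point). If $u$ is a translation, the statement is exactly \Cref{lemma:reflections-below-translation}. If $u$ is a rotation, \Cref{lemma:rotations} already says that $u$ is a Coxeter element of the parabolic subgroup $W_u$ fixing $p$, so the only thing left is to identify $W_u$ with the subgroup generated by the reflections below $u$. One inclusion is geometric: any reflection $r\le u$ admits a factorization $u=rr'$ with $r'$ a reflection, and then both $\Fix(r)$ and $\Fix(r')$ pass through $p$, so $r\in W_u$. For the reverse inclusion I would record the elementary remark that a reflection-length-additive factorization $w=xy$ (meaning $l_R(x)+l_R(y)=l_R(w)$) puts \emph{both} factors below $w$ — because $l_R(y^{-1}w)=l_R(y^{-1}xy)=l_R(x)$, using that $R$ is closed under conjugation — and apply it to the factorization $u=r_1r_2$ into simple reflections of $W_u$ produced in the proof of \Cref{lemma:rotations}; this gives $r_1,r_2\le u$, hence $W_u=\langle r_1,r_2\rangle$ is generated by reflections below $u$.

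For $l_R(u)=3$, the interval identity $l_R(u)+l_R(u^{-1}w)=l_R(w)=3$ forces $u^{-1}w=1$, i.e.\ $u=w$, so the claim reduces to showing that $w$ is a Coxeter element of the subgroup generated by the reflections below it. By \Cref{lemma:reflections-and-axis} the Coxeter axis $\ell$ is not a reflection line, so there is an axial chamber $C$, and its wall reflections $s_1,s_2,s_3$ satisfy $w=s_1s_2s_3$ by \Cref{lemma:axial-factorization}. These reflections form a $W$-conjugate of $S$, hence generate $W$; and applying the factorization remark of the previous paragraph to $w=s_1\cdot(s_2s_3)$ and $w=(s_1s_2)\cdot s_3$, together with transitivity of $\le$ and $s_1s_2=s_1\cdot s_2$, shows $s_1,s_2,s_3\le w$. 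Thus the reflections below $w$ generate $W$, and $w=s_1s_2s_3$ is by definition a Coxeter element of $(W,\{s_1,s_2,s_3\})$.

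I do not expect a genuine obstacle here: the geometric substance has already been extracted in \Cref{lemma:reflections}, \Cref{lemma:rotations} and — most importantly — \Cref{lemma:reflections-below-translation}, whose proof relies on the minimality of the spectral radius of Coxeter elements among essential isometries, and the theorem is really a packaging of these. The only mildly delicate points are the bookkeeping identifications above, namely that the subgroups $W_u$ occurring in the rotation and translation lemmas coincide with ``the subgroup generated by the reflections $\le u$'', and the small but convenient remark that a reflection-length-additive factorization places both of its factors in $[1,w]$.
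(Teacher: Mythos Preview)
Your proposal is correct and follows exactly the approach the paper intends: the paper gives no explicit proof at all, merely stating that the theorem is yielded by the two preceding lemmas (\Cref{lemma:rotations} and \Cref{lemma:reflections-below-translation}). Your case analysis by reflection length simply unpacks this, with the additional bookkeeping (identifying $W_u$ with the subgroup generated by reflections $\le u$, and handling the trivial cases $l_R(u)\in\{0,1,3\}$) that the paper leaves implicit.
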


We end this section by proving a geometric property of factorizations of translations into horizontal reflections.

\begin{lemma}[Five lines]
    Let $t \in [1, w]$ be a translation.
    Suppose that there is a factorization $t = r_1r_2$ where $r_1$ and $r_2$ are horizontal reflections whose fixed lines are on opposite sides of $\ell$.
    Let $r = wt^{-1}$ be the left complement of $t$ and $r' = t^{-1}w$ be the right complement of $t$.
    Then the (oriented) translation axis of $t$ intersects the five lines $\Fix(r'), \Fix(r_2), \ell, \Fix(r_1)$, and $\Fix(r)$ in this order, and these five lines are pairwise disjoint.
    \label{lemma:five-lines}
\end{lemma}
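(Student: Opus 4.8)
The plan is to work in the half-plane model with coordinates adapted to $\Min(t)$ and reduce the statement to a short explicit computation, after first identifying the geometric type of the left and right complements $r=wt^{-1}$ and $r'=t^{-1}w$. A reflection-length count gives $l_R(r)=l_R(r')=1$, so $r,r'$ are reflections; and since $rw=t$ and $wr'=t$ are translations rather than rotations, \Cref{lemma:reflections} forces $r$ and $r'$ to be horizontal. Moreover $t=r_1r_2$ is a translation written as a product of two reflections, so $\Fix(r_1)$ and $\Fix(r_2)$ are orthogonal to $\Min(t)$. Now I would choose coordinates with $\Min(t)=i\R_+$ oriented upward in the translation direction of $t$: then $t$ is a translation as in \eqref{eq:reflection-translation}, say $t=\left(\begin{smallmatrix}\lambda&0\\0&1\end{smallmatrix}\right)$ with $\lambda>1$; the lines $\Fix(r_1),\Fix(r_2)$ are the Euclidean semicircles $|z|=\rho_1$ and $|z|=\rho_2$ centred at the origin, and $r_1r_2=t$ forces $\rho_1=\sqrt\lambda\,\rho_2$; and, writing $r$ as in \eqref{eq:reflection} with centre $\alpha$ and radius $\rho$, one has $\rho>|\alpha|>0$, because $\Fix(r)$ meets $\Min(t)$ (\Cref{lemma:translations}) while $\alpha\neq0$ since $\Fix(r)$ is not orthogonal to $\Min(t)$ (otherwise $w=rt$ would be a reflection, not a glide reflection).

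All five lines cross $i\R_+$ (by orthogonality for $\Fix(r_1),\Fix(r_2)$; by \Cref{lemma:reflections} for $\ell$; by \Cref{lemma:translations} for $\Fix(r),\Fix(r')$), and I would record the heights of these crossings. The matrix of $w=rt$ is \eqref{eq:hyperbolic-isometry}; computing with it shows that the endpoints $\zeta_1<0<\zeta_2$ of $\ell$ satisfy $\zeta_1\zeta_2=(\alpha^2-\rho^2)/\lambda$, so $\ell$ crosses $i\R_+$ at height $\tau:=\sqrt{(\rho^2-\alpha^2)/\lambda}$, while $\Fix(r)$ crosses at height $\sqrt{\rho^2-\alpha^2}=\sqrt\lambda\,\tau$ and $\Fix(r')=t^{-1}(\Fix(r))$ at height $\tau/\sqrt\lambda$. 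Horizontal reflection lines are disjoint from $\ell$ even at infinity (\Cref{rmk:reflections-distance}), and two disjoint geodesics that both cross $i\R_+$ are necessarily nested, so each of $\Fix(r_1),\Fix(r_2),\Fix(r'),\Fix(r)$ is nested with $\ell$; the hypothesis that $\Fix(r_1),\Fix(r_2)$ lie on opposite sides of $\ell$, together with $\rho_1>\rho_2$, then forces $\Fix(r_2)$ to be enclosed by $\ell$ and $\ell$ by $\Fix(r_1)$, i.e.\ $\zeta_1\in(-\rho_1,-\rho_2)$ and $\zeta_2\in(\rho_2,\rho_1)$, hence $\rho_2<\tau<\rho_1$ (as $\tau^2=(-\zeta_1)\zeta_2$). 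Comparing the five heights $\tau/\sqrt\lambda,\ \rho_2=\rho_1/\sqrt\lambda,\ \tau,\ \rho_1=\sqrt\lambda\,\rho_2,\ \sqrt\lambda\,\tau$ immediately yields $\tau/\sqrt\lambda<\rho_2<\tau<\rho_1<\sqrt\lambda\,\tau$, which is exactly the asserted order $\Fix(r'),\Fix(r_2),\ell,\Fix(r_1),\Fix(r)$ along the oriented axis.

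For pairwise disjointness, the four pairs involving $\ell$ follow from \Cref{rmk:reflections-distance}. The height comparison above also fixes the nesting of $\ell$ with $\Fix(r)$ and with $\Fix(r')$ (since $\tau<\sqrt\lambda\,\tau$ and $\tau/\sqrt\lambda<\tau$, the line $\ell$ is enclosed by $\Fix(r)$ and $\Fix(r')$ is enclosed by $\ell$), so chaining these enclosures with ``$\Fix(r_2)$ enclosed by $\ell$ enclosed by $\Fix(r_1)$'' disposes of the pairs $\{\Fix(r_2),\Fix(r_1)\}$, $\{\Fix(r_2),\Fix(r)\}$, $\{\Fix(r'),\Fix(r_1)\}$, $\{\Fix(r'),\Fix(r)\}$. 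Only $\{\Fix(r_1),\Fix(r)\}$ and $\{\Fix(r_2),\Fix(r')\}$ remain, and these are the crux: from $w=(rr_1)r_2=r_1(r_2r')$ we obtain $rr_1=wr_2$ and $r_2r'=r_1w$, and since $r_1,r_2$ are horizontal, \Cref{lemma:reflections} shows that $wr_2$ and $r_1w$ are translations; hence $\Fix(r)$ and $\Fix(r_1)$ are ultraparallel, and likewise $\Fix(r_2)$ and $\Fix(r')$. In particular no two of the five lines meet, even at infinity.

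I expect the main obstacle to be exactly this last point: the computation in coordinates does not by itself pin down the nesting of $\Fix(r)$ with $\Fix(r_1)$ (nor of $\Fix(r_2)$ with $\Fix(r')$); the right move is to rewrite $rr_1=wr_2$ and $r_2r'=r_1w$ and exploit that composing the glide reflection $w$ with a horizontal reflection always produces a translation (\Cref{lemma:translations,lemma:reflections}), whose two mirror lines are automatically ultraparallel.
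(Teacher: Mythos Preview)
Your argument is correct. The route differs from the paper's in how the ordering along $\Min(t)$ is established: the paper works relative to the Coxeter axis $\ell$ and applies \Cref{lemma:translations} several times in its qualitative form (``before/after $\ell$''), first to $t=rw=wr'$ to place $\Fix(r)$ after $\ell$ and $\Fix(r')$ before $\ell$, and then to the auxiliary translation $rr_1=wr_2$ to pin down the relative position of $\Fix(r_1)$ between $\ell$ and $\Fix(r)$ (and symmetrically for $\Fix(r_2)$). You instead pick half-plane coordinates with $\Min(t)=i\R_+$, compute the five crossing heights $\tau/\sqrt\lambda,\ \rho_2,\ \tau,\ \rho_1,\ \sqrt\lambda\,\tau$, and read off the order from $\rho_2<\tau<\rho_1$ and $\rho_1=\sqrt\lambda\,\rho_2$; the nesting argument (disjoint geodesics both meeting $i\R_+$ must be nested, and the outer one has the higher crossing) is a clean device that the paper does not use. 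For the two delicate disjointness claims $\{\Fix(r),\Fix(r_1)\}$ and $\{\Fix(r_2),\Fix(r')\}$, both proofs use the same idea: rewrite $rr_1=wr_2$ and $r_2r'=r_1w$ and invoke \Cref{lemma:reflections} (or \Cref{lemma:translations}) to see these are translations, hence the mirrors are ultraparallel. Your computation has the virtue of delivering the full five-term order in one stroke; the paper's approach stays coordinate-free and leans only on the qualitative content of \Cref{lemma:translations}.
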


\begin{proof}
    The situation is depicted in \Cref{fig:five-lines}.
    By construction and by \Cref{lemma:reflections}, all reflections $r, r', r_1, r_2$ are horizontal and therefore their fixed lines do not intersect $\ell$.
    By \Cref{lemma:translations}, the translation axis of $t$ intersects $\Fix(r')$ and $\Fix(r_2)$ before $\ell$, and it intersects $\Fix(r)$ and $\Fix(r_1)$ after $\ell$.
    Since $w = rt = rr_1r_2$, we have that $rr_1$ is a translation by \Cref{lemma:translations}, and therefore $\Fix(r)$ does not intersect $\Fix(r_1)$. Similarly, $\Fix(r')$ does not intersect $\Fix(r_2)$.
    Applying \Cref{lemma:translations} to $w$ and $r_2$, we find that the (oriented) translation axis of $rr_1$ intersects $\Fix(r_2)$ before $\ell$.
    Therefore, $\Fix(r_1)$ is between $\ell$ and $\Fix(r)$.
    Similarly, $\Fix(r_2)$ is between $\ell$ and $\Fix(r')$.
\end{proof}

\begin{figure}
    \centering
    \begin{tikzpicture}[scale=3.6]
        \begin{scope}
            \clip (0,0) circle (1);
            
            \draw[
                white,
                decoration={markings, mark=at position 0.52 with {\arrow[orange, thick]{>}}},
                postaction={decorate}
            ] (-142.629284:1) -- (9.629284:1);

            \draw[
                white,
                decoration={markings, mark=at position 0.465 with {\arrow[blue, thick]{<}}},
                postaction={decorate}
            ] (-37.741750:1) -- (127.741750:1);

            \begin{scope}[every path/.style={thick, dashed, orange}]
                \hgline{-130.429284}{-4.570716}{}
            \end{scope}
            
            \input{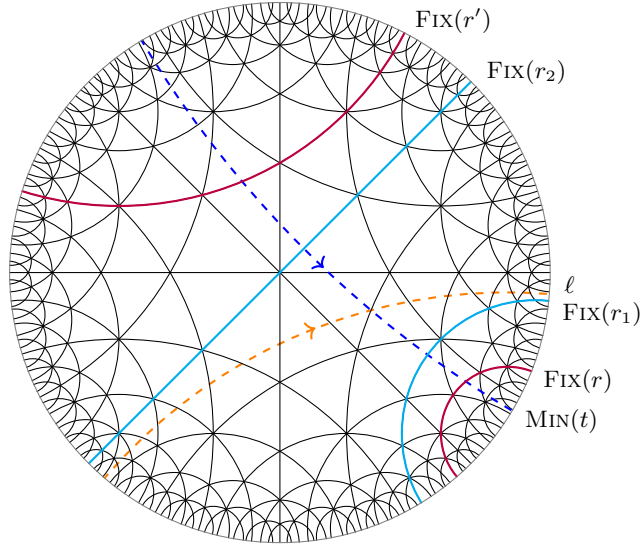}

            \begin{scope}[every path/.style={thick, dashed, blue}]
                \hgline{-30.741750}{120.741750}{}
            \end{scope}
        
            \begin{scope}[every path/.style={thick, purple}]
                \hgline{-49.100550}{-21.428229}{}
                \hgline{62.560359}{162.439641}{}
            \end{scope}
        
            \begin{scope}[every path/.style={thick, cyan}]
                \hgline{45.000000}{-135.000000}{}
                \hgline{-6.011412}{-58.459809}{}
            \end{scope}
            
            \draw[black!50, thick] (0,0) circle (1);
        \end{scope}
        
        \node at (0.64, 0.93) {\small $\Fix(r')$};
        
        \node at (0.91, 0.74) {\small $\Fix(r_2)$};
        \node at (1.07, -0.045) {\small $\ell$};
        \node at (1.19, -0.155) {\small $\Fix(r_1)$};
        
        \node at (1.1, -0.4) {\small $\Fix(r)$};
        \node at (1.04, -0.55) {\small $\Min(t)$};
    \end{tikzpicture}
    
    \caption{The oriented axis $\Min(t)$ of a translation $t = r_1r_2$ (dashed in blue) together with the other five lines of \Cref{lemma:five-lines}. The Coxeter axis $\ell$ is the same as in \Cref{fig:coxeter-axis}, left (dashed in orange).}
    \label{fig:five-lines}
\end{figure}

\section{The poset of noncrossing partitions}
\label{sec:poset}

In this section, we study some order-theoretic properties of the noncrossing partition poset $[1,w]$, where $w$ is a Coxeter element of a rank-three hyperbolic Coxeter group.
The following preliminary observation will allow us to link the order relation in $[1,w]$ with the geometric attributes of its elements.

\begin{lemma}
    The map $u \mapsto \Mov(u)$ from $[1, w]$ to the poset of linear subspaces of $\R^3$ (ordered by inclusion) is a poset isomorphism onto its image.
    \label{lemma:mov}
\end{lemma}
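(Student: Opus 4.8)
The plan is to prove two things: (a) the map $u\mapsto\Mov(u)$ is order-preserving, and (b) it is order-reflecting, i.e.\ $\Mov(u)\subseteq\Mov(v)$ implies $u\le v$. Since $[1,w]$ is a genuine poset, (b) applied in both directions also gives injectivity, so (a) and (b) together yield the claim. Throughout I would use the elementary fact that for isometries $g,h$ of $(\R^3,Q)$ one has $\Mov(gh)\subseteq\Mov(g)+\Mov(h)$ (because $gh-\id=(g-\id)h+(h-\id)$), hence $\dim\Mov(g)\le l_R(g)$ for every $g\in W$, together with the identity $\dim\Mov=\text{(reflection length)}$ from \cite[Theorem 5.2]{mccammond2021factoring}; in particular $\dim\Mov(w)=3$ since $w$ is a glide reflection.

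First I would record two consequences. For every $v\in[1,w]$ the left complement $v^{-1}w$ again lies in $[1,w]$ and $l_R(v)+l_R(v^{-1}w)=3$; combining this with the inequalities above, $3=\dim\Mov(w)\le\dim\Mov(v)+\dim\Mov(v^{-1}w)\le l_R(v)+l_R(v^{-1}w)=3$, so $\dim\Mov(v)=l_R(v)$ for all $v\in[1,w]$. Next, if $u\le v$ in $[1,w]$, a short length computation (using that $l_R$ is conjugation-invariant and subadditive) shows $u^{-1}v\in[1,w]$ with $l_R(u^{-1}v)=l_R(v)-l_R(u)$; hence $\dim\Mov(u^{-1}v)=\dim\Mov(v)-\dim\Mov(u)$, and since $\Mov(v)\subseteq\Mov(u)+\Mov(u^{-1}v)$ a dimension count forces $\Mov(v)=\Mov(u)\oplus\Mov(u^{-1}v)$. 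In particular $\Mov(u)\subseteq\Mov(v)$, which is (a).

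For (b) I would first show $\Mov(u)=\operatorname{span}\{e_r\mid r\in R,\ r\le u\}$: the inclusion $\supseteq$ is (a) applied to $r\le u$, and $\subseteq$ follows from \Cref{thm:coxeter-elements-below-w}, which writes $u$ as a reduced product $s_1\cdots s_k$ of simple reflections of $W_u=\langle r\mid r\le u\rangle$ (each $s_i$ is a reflection $\le u$, and $\Mov(u)\subseteq\sum_i\R e_{s_i}$, a subspace of dimension $\le k=\dim\Mov(u)$, hence equal to it). Then I would argue by the rank $k=\dim\Mov(u)$. The cases $k=0$ and $k=3$ are immediate, and for $k=1$ the reflection $u$ is determined by its root line $\Mov(u)$, so $\Mov$ is injective on reflections. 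The case $k=2$ is where \Cref{thm:coxeter-elements-below-w} does the work: $u$ is a Coxeter element of the (possibly infinite) dihedral group $W_u$, and $W_u$ is exactly $\{r\mid e_r\in\Mov(u)\}$ — indeed, if $u$ is a rotation about an axial vertex $p$ (\Cref{lemma:rotations}) then $e_r\in\Mov(u)=(\R p)^{\perp_Q}$ iff $r$ fixes $p$, and if $u$ is a translation (\Cref{lemma:reflections-below-translation}) then $e_r\in\Mov(u)$ iff $\Fix(r)\perp\Min(u)$, and in either case a one-line computation shows such an $r$ makes $ru$ a reflection, so $r\le u$. Consequently $\Mov(u)\subseteq\Mov(v)$ with $v$ of rank $2$ forces every reflection $\le u$ to be $\le v$, hence $W_u\subseteq W_v$; together with $\dim\Mov(u)\le\dim\Mov(v)$ this yields $u\le v$ in all remaining configurations (a rank-$1$ $u$ below a rank-$2$ $v$, since every reflection of a dihedral group lies below its Coxeter element; a rank-$2$ $u$ below a rank-$2$ $v$, which forces $\Mov(u)=\Mov(v)$, hence $W_u=W_v$, hence $u=v$ by the injectivity step below).

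The delicate point, and the one I expect to be the main obstacle, is exactly this rank-$2$ injectivity — a $2$-dimensional subspace does not by itself determine a rank-$2$ isometry, since a rotation and its inverse, and translations of different lengths along the same axis, share a moved set. For translations this is resolved by orientation: by \Cref{lemma:translations} a translation in $[1,w]$ meets the oriented Coxeter axis $\ell$ at an angle $<\frac{\pi}{2}$, which fails for its inverse, while \Cref{lemma:reflections-below-translation} pins down the translation length; thus such a translation is determined by $\Min(u)$ together with its orientation. For rotations one shows that at most one of the two Coxeter elements $c,c^{-1}$ of the dihedral group $W_v$ (the rotations by $\pm 2\pi/m$ about $p$) can lie in $[1,w]$: if both did, then $c^{-1}w$ and $cw$ would both be reflections, which would force $c^2$ to be a product of two reflections neither of which fixes $p$ — impossible unless $c^2=\id$, i.e.\ $c=c^{-1}$. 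I expect that the only genuinely non-formal work is this case analysis together with the verification that the reflection subgroup of $W$ fixing an axial vertex (resp.\ perpendicular to a translation axis) equals $\{r\mid e_r\in\Mov(v)\}$ and consists of reflections $\le v$; everything else reduces to manipulating moved sets and reflection lengths.
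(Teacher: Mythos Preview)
Your argument is correct, but it takes a substantially different and more laborious route than the paper's. The paper does not argue directly on $[1,w]\subseteq W$. Instead it observes that, since $l_R(w)=3$ both in $W$ and in $\Isom(\H^2)$, there is a rank-preserving inclusion $[1,w]\hookrightarrow[1,w]_{\Isom(\H^2)}$, and the only thing to check is that this inclusion reflects order: if $u\le v$ in $[1,w]_{\Isom(\H^2)}$ with $u,v\in W$, the nontrivial case is $l_R(u)=1$, $l_R(v)=2$, where $u^{-1}v\in W$ is a reflection of $\Isom(\H^2)$ and hence a reflection of $W$. One then simply quotes \cite[Theorem~5.3]{mccammond2021factoring}, which says that $u\mapsto\Mov(u)$ is a poset isomorphism from $[1,w]_{\Isom(\H^2)}$ onto its image. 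That is the whole proof.

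By contrast, you reprove the injectivity and order-reflection on rank-$2$ elements by hand, using \Cref{thm:coxeter-elements-below-w}, \Cref{lemma:rotations}, \Cref{lemma:reflections-below-translation}, and \Cref{lemma:translations} to pin down the unique Coxeter element of each dihedral or $\tilde A_1$ subgroup that can lie in $[1,w]$. This is logically sound (those results are proved before \Cref{lemma:mov}, so there is no circularity) and your orientation/length arguments for translations and your ``$c$ and $c^{-1}$ cannot both lie in $[1,w]$'' argument for rotations are correct. The trade-off is clear: the paper's proof is short and conceptual, outsourcing the linear-algebra content to a general theorem about isometry groups of quadratic spaces, while your proof is self-contained but leans on the rank-three geometric classification already developed in Section~3. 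In particular, your approach would not immediately generalize beyond rank three, whereas the cited theorem does.
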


\begin{proof}
    Denote by $[1, w]_{\Isom(\H^2)}$ the interval between the identity and $w$ in the group $\Isom(\H^2)$ of all isometries of the hyperbolic plane $\H^2$ (using all reflections as generators).
    Since the reflection length of $w$ is $3$ both in $W$ and in $\Isom(\H^2)$, every geodesic between $1$ and $w$ in the Cayley graph of $W$ is also a geodesic in the Cayley graph of $\Isom(\H^2)$.
    Therefore, there is a natural order-preserving and rank-preserving inclusion $[1, w] \hookrightarrow [1, w]_{\Isom(\H^2)}$.
    To show that this inclusion is a poset isomorphism onto its image, we need to check that $u \leq v$ in $[1, w]_{\Isom(\H^2)}$ implies $u \leq v$ in $[1,w]$.
    The only non-trivial case is when $u$ has reflection length $1$ and $v$ has reflection length $2$ (in both $W$ and $\Isom(\H^2)$): in this case, $v = ur$ for some reflection $r \in \Isom(\H^2)$ and $r$ is also a reflection of $W$ (because $u, v \in W$), so $u \leq v$ in $[1,w]$.
    Finally, by \cite[Theorem 5.3]{mccammond2021factoring}, the map $u \mapsto \Mov(u)$ from $[1, w]_{\Isom(\H^2)}$ to the poset of linear subspaces of $\R^3$ is a poset isomorphism onto its image.
\end{proof}

\subsection{The lattice property}
We now prove that $[1,w]$ is a lattice and thus defines a Garside structure on the dual Artin group $W_w$.

\begin{theorem}
    The interval $[1,w]$ is a lattice.
    \label{thm:lattice}
\end{theorem}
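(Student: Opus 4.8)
The plan is to use \Cref{lemma:mov} to transport the question about $[1,w]$ into a question about the poset $\mathcal{L} = \{\Mov(u) \mid u \in [1,w]\}$ of moved sets, ordered by inclusion, inside the poset of all linear subspaces of $\R^3$. Since $[1,w]$ is a chain-finite bounded poset with bottom $1$ (moved set $\{0\}$) and top $w$ (moved set all of $\R^3$), it suffices to show that every pair $u,v \in [1,w]$ has a meet (a unique maximal lower bound); the existence of joins then follows by the standard duality argument, using that $[1,w]$ is self-dual via the map $u \mapsto u^{-1}w$ (equivalently, one can run the meet argument in the opposite poset, which is again a noncrossing partition interval for $w^{-1}$ by the balanced property). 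So the core task is: given $u,v$, produce a canonical largest element below both.

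First I would reduce to small rank. The only genuinely nontrivial case is when $\rk u = \rk v = 2$, since meets involving an element of rank $0,1$, or $3$ are immediate (the bottom, a reflection-length computation, or the top). So let $u,v$ be two rank-two elements of $[1,w]$, i.e.\ each is either a rotation about an axial vertex or a translation (by \Cref{lemma:reflections,lemma:rotations,lemma:reflections-below-translation} together with the vertical/horizontal dichotomy). Their common lower bounds of rank $\le 1$ are either $1$ or reflections $r \le u$ and $r \le v$; so a meet exists unless there are two or more incomparable common reflections, and what must be shown is that \emph{at most one reflection lies below both $u$ and $v$} — for then that reflection (or $1$, if none exists) is the meet. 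Geometrically this is the statement that the set of reflection lines ``used by'' a rank-two element, together with those used by another, can overlap in at most one line unless $u = v$. Here I would exploit the geometric descriptions already established: a rotation uses exactly the reflections through its (axial) fixed point, and by \Cref{lemma:reflections-and-axis} at most two reflection lines pass through an axial vertex and they are then perpendicular; a translation $t$ uses the infinite discrete family $\{r_i\}$ with $\Fix(r_i)$ crossing $\Min(t)$, and by \Cref{lemma:reflections-below-translation} consecutive ones compose to $t$. If two reflections $r \ne r'$ were below both $u$ and $v$, then $u$ and $v$ would each be the unique rank-two element with $r, r' \le \cdot$ (namely $rr'$ or $r'r$, determined by the factorization of $w$), forcing $u = v$ — this is the key rigidity input, and the geometry of the previous section is exactly what makes it true.

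The main obstacle I expect is the careful case analysis needed to show this ``at most one common reflection'' statement in full — in particular ruling out configurations like a rotation and a translation, or two translations with distinct axes, sharing two reflection lines. For the rotation/translation case one uses that a translation has no fixed point in $\H^2 \cup \partial\H^2$ while a rotation does, so a shared pair of perpendicular reflection lines would force the translation axis through a common point, contradiction. For two translations $t_1 \ne t_2$ sharing two reflection lines $\Fix(r), \Fix(r')$: if the lines are disjoint then $rr'$ and $r'r$ are translations with axis determined by the common perpendicular (or common ideal endpoint), so $t_1, t_2 \in \{rr', r'r\}$, and one checks $rr' \ne r'r$ both lie below $w$ only in a way that still pins down the pair — here one leans on \Cref{lemma:reflections-below-translation} ($t$ is a Coxeter element for a $\tilde A_1$ subgroup, so two reflections below $t$ that are ``consecutive'' give $t$, and non-consecutive ones cannot both sit below a single $t$ in $[1,w]$ by the spectral-radius minimality argument); if the lines intersect, they are perpendicular, their product is a rotation not a translation, contradiction. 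Once all sub-cases are dispatched, the meet exists pointwise, chain-finiteness and boundedness upgrade this to a lattice, and the corollary about the Garside structure on $W_w$ follows from \Cref{thm:garside}.
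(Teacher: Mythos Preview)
Your reduction is exactly right and matches the paper: in a bounded rank-$3$ poset, a failure of the lattice property forces a bowtie --- two distinct reflections $r_1, r_2$ below two distinct rank-two elements $u_1, u_2$ --- so it suffices to show that two distinct reflections determine at most one rank-two element of $[1,w]$ above them. You even name the right tool, \Cref{lemma:mov}. But then you abandon it and launch a geometric case analysis which is both unnecessary and flawed.

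The paper's argument is one line of linear algebra once \Cref{lemma:mov} is in hand: $\Mov(r_i) \subseteq \Mov(u_j)$ for all $i,j$; the $\Mov(r_i)$ are distinct $1$-dimensional subspaces (injectivity of $\Mov$) and the $\Mov(u_j)$ are $2$-dimensional; hence $\Mov(u_1) = \Mov(r_1) + \Mov(r_2) = \Mov(u_2)$, whence $u_1 = u_2$ by injectivity again. None of the geometric structure you marshal (rotations vs.\ translations, axial vertices, perpendicularity, spectral radius) is needed.

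The specific error in your case analysis: you claim that by \Cref{lemma:reflections-and-axis} at most two reflection lines pass through an axial vertex. That lemma concerns points \emph{on the Coxeter axis} $\ell$; an axial vertex is a vertex of a chamber meeting $\ell$ and typically lies off $\ell$. Through an axial vertex $p$ pass all the reflection lines of the dihedral parabolic subgroup fixing $p$, which may be arbitrarily many (cf.\ \Cref{lemma:rotations}). So your rotation case collapses. Your translation case is also muddled: from $r,r' \le t$ you cannot conclude $t \in \{rr',\,r'r\}$, since by \Cref{lemma:reflections-below-translation} the reflections below $t$ form an infinite sequence and $r,r'$ need not be consecutive in it. The uniqueness you want is again immediate from $\Mov$, not from consecutivity.
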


\begin{proof}
    The poset $[1,w]$ is bounded and of rank $3$. Thus, if it is not a lattice then there are two reflections $r_1 \neq r_2$ and two rank-two elements $u_1 \neq u_2$ with $r_i < u_j$ for all $i,j\in \{1,2\}$ (this configuration is usually called a ``bowtie'').
    By \Cref{lemma:mov}, we have that $\Mov(r_i) \subseteq \Mov(u_j)$, where each $\Mov(r_i)$ is one-dimensional and each $\Mov(u_j)$ is two-dimensional.
    Also, $\Mov(r_1) \neq \Mov(r_2)$, and therefore $\Mov(u_1) = \Mov(r_1) + \Mov(r_2) = \Mov(u_2)$, which is a contradiction (again by \Cref{lemma:mov}).
\end{proof}

\begin{corollary}
    The dual Artin group $W_w$ is a Garside group.
    \label{cor:dual-garside}
\end{corollary}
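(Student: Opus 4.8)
The plan is to recognize $W_w$ as an interval group in the sense of \Cref{sec:garside-structures} and then invoke \Cref{thm:garside}. First I would recall that, by definition, the dual Artin group $W_w$ is the fundamental group of the interval complex $K_W$ associated with the interval $[1,w] \subseteq W$, computed with respect to the generating set $R$ of all reflections. Unwinding the definitions, the presentation of $W_w$ read off from the $2$-skeleton of $K_W$ — a generator $[x]$ for each $x \in [1,w] \setminus \{1\}$ and a relation $[x][y] = [xy]$ for each length-additive pair with $xy \in [1,w]$ — is precisely the interval-group presentation of \Cref{sec:garside-structures}. So $W_w$ is literally the interval group attached to $[1,w]$.

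Next I would verify the hypotheses of \Cref{thm:garside}, which asks for $[1,w]$ to be a \emph{balanced lattice}. Balancedness was already noted: since $R = R^{-1}$ is closed under conjugation, the interval $[1,w]$ inside the right Cayley graph of $W$ coincides with the interval $[1,w]$ inside the left Cayley graph, which is exactly the balancedness condition. The lattice property is the content of \Cref{thm:lattice}. Hence $[1,w]$ is a balanced lattice, and \Cref{thm:garside} gives immediately that $W_w$ is a Garside group (and, as a byproduct, that $K_W$ is a classifying space for $W_w$).

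There is no genuine obstacle here: the corollary is a direct consequence of \Cref{thm:lattice} combined with the general machinery recalled in \Cref{thm:garside}. The only point requiring (minor) care is to make sure the two notions of ``interval group'' in play — the abstract one of \Cref{sec:garside-structures} and the dual Artin group $W_w = \pi_1(K_W)$ — are literally the same object, which is exactly the identification of presentations indicated above.
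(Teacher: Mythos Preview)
Your proof is correct and follows exactly the paper's approach: identify $W_w$ as the interval group of $[1,w]$, note that $[1,w]$ is balanced because $R$ is conjugation-closed, invoke \Cref{thm:lattice} for the lattice property, and apply \Cref{thm:garside}. The only difference is that you spell out the identification of presentations in slightly more detail than the paper does.
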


\begin{proof}
    This follows immediately from \Cref{thm:garside,thm:lattice}, since $W_w$ is the interval group associated with the poset $[1,w]$, which is balanced (see \Cref{sec:coxeter-groups}).
\end{proof}

\subsection{Axial ordering}
\label{sec:axial-ordering}
Denote by $R_0$ the set of all reflections of $[1,w]$. We now describe a total ordering of $R_0$ which we call \emph{axial ordering}.

Working in the hyperboloid model, let $P \subseteq \R^3$ be the Coxeter plane, i.e., the linear span of the Coxeter axis $\ell$.
Given a reflection $r$, let $\overline\Fix(r) = \ker(\bar r - \id)$ where $\bar r$ is the extension of $r$ to a linear reflection of $\R^3$ (with respect to the quadratic form $Q$ of \Cref{iso-hyperboloid}).

Note that $\overline\Fix(r)$ is the fixed set of $\bar r$ in $\R^3$ and coincides with the linear span of $\Fix(r)$ in the hyperboloid model.

\begin{definition}[Axial ordering of reflections]
    Let $\prec$ be the total ordering of $R_0$ defined as follows.
    Fix an axial chamber $C_0$ and a point $q \in \ell \cap C_0$.
    Define $r_1 \prec r_2$ whenever the line $\overline\Fix(r_1) \cap P$ comes before the line $\overline\Fix(r_2) \cap P$ when traversing the projective line $\mathbb{P}(P)$ starting from the line spanned by $q$ and going in the positive direction of $\ell$.
    If $\overline\Fix(r_1) \cap P = \overline\Fix(r_2) \cap P$, define the relative order between $r_1$ and $r_2$ arbitrarily.
    \label{def:axial-ordering}
\end{definition}

\begin{remark}[Dihedral subgroups]
	Let $u \in [1,w]$ be a rotation, so that $[1,u]$ generates a dihedral subgroup $W_u$ for which $u$ is a Coxeter element (\Cref{lemma:rotations}).
	By \cite[Proposition 4.4]{paolini2021proof}, the restriction $\prec_u$ of the total order $\prec$ to $R_0 \cap [1, u]$ is a \emph{reflection ordering} for $W_u$: whenever $\alpha, \alpha_1, \alpha_2$ are distinct positive roots and $\alpha$ is a positive linear combination of $\alpha_1$ and $\alpha_2$, we have either
	\[ r_{\alpha_1} \prec r_\alpha \prec r_{\alpha_2} \quad \text{or} \quad r_{\alpha_2} \prec r_\alpha \prec r_{\alpha_1} \]
	(see \cite{bourbaki1968elements, dyer1993hecke, bjorner2006combinatorics} for more background on reflection orderings).
	The situation is depicted in \Cref{fig:rotation-and-axis}.
	Reflection orderings play an important role in the proof that finite noncrossing partition lattices are EL-shellable \cite{athanasiadis2007shellability}.
	\label{rmk:dihedral-reflection-orderings}
\end{remark}

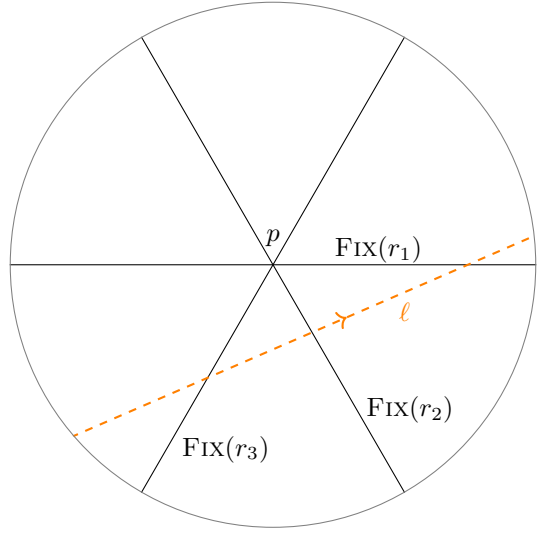
\begin{figure}
	\centering
	\begin{tikzpicture}[scale=3.5]
		\clip (0,0) circle (1);
		
		\node[label=$p$] (p) at (0, 0) {};
		\draw (-1.2, 0) -- (1.2, 0);
		\draw[rotate=60] (-1.2, 0) -- (1.2, 0);
		\draw[rotate=120] (-1.2, 0) -- (1.2, 0);
		
		\draw[orange, dashed, thick,
		decoration={markings, mark=at position 0.6 with {\arrow[orange, thick]{>}}},
		postaction={decorate}
		] (-1.1, -0.8) -- (1.2, 0.2);
		
		\node[orange] (l) at (0.5, -0.18) {$\ell$};
		
		\node at (0.4, 0.06) {$\Fix(r_1)$};
		\node at (0.52, -0.55) {$\Fix(r_2)$};
		\node at (-0.18, -0.7) {$\Fix(r_3)$};
		
		\draw[black!50, thick] (0,0) circle (1);
	\end{tikzpicture}
	\caption{
		Reflections below a $\frac{2\pi}{3}$-rotation $u = r_1r_2$ in the Klein model, as in \Cref{rmk:dihedral-reflection-orderings} and in Case 1 of the proof of \Cref{thm:shellability}.
		The Coxeter axis $\ell$ does not pass through $p$, so it defines a reflection ordering of $R_0 \cap [1, u]$, say $r_1 \prec r_3 \prec r_2$ (it could also be $r_2 \prec r_1 \prec r_3$ or $r_3 \prec r_2 \prec r_1$, depending on the location of the chamber $C_0$ along the axis $\ell$).
		We have that $u = r_1r_2 = r_2r_3 = r_3r_1$, so any of the three possible reflection orderings above is compatible with $u$.
	}
	\label{fig:rotation-and-axis}
\end{figure}

Next, we give a more intrinsic reformulation of the definition of axial ordering.

\begin{definition}
    Given a reflection $r \in R_0$, define $\xi(r) \in \ell$ to be the point of the Coxeter axis $\ell$ which is closest to $\Fix(r)$.
    Note that $\Fix(r)$ does not meet $\ell$ at infinity by \Cref{rmk:reflections-distance}, so $\xi(r)$ is not at infinity.
    An example of the construction of $\xi(r)$ in the Poincaré model is shown in \Cref{fig:labeling-reformulation}.
    \label{defxi}
\end{definition}

\begin{lemma}
    Let $r_1, r_2 \in R_0$ be horizontal reflections.
    We have that $\overline\Fix(r_1) \cap P = \overline\Fix(r_2) \cap P$ if and only if $\xi(r_1) = \xi(r_2)$.
    In addition, $r_1 \prec r_2$ if and only if $\xi(r_1)$ is above $\xi(r_2)$.
    \label{lemma:labeling-reformulation}
\end{lemma}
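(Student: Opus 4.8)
The plan is to work throughout in the hyperboloid model and to reduce \emph{both} quantities $\overline\Fix(r)\cap P$ and $\xi(r)$ to data attached to one auxiliary geodesic, namely the common perpendicular $m$ of the Coxeter axis $\ell$ and the fixed line $\Fix(r)$. This geodesic exists and is unique precisely because $r$ is horizontal: by \Cref{rmk:reflections-distance} the geodesics $\ell$ and $\Fix(r)$ are disjoint and share no endpoint at infinity. By the very definition of $\xi(r)$ (\Cref{defxi}) as the point of $\ell$ closest to $\Fix(r)$, we have $\xi(r) = m\cap\ell$. The heart of the argument is the identity
\[ \overline\Fix(r)\cap P = \langle n_m\rangle, \]
where $n_m$ spans the pole of $m$ (the $B$-positive line orthogonal to the plane $U$ with $m = L\cap U$). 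I would prove this through the standard polarity dictionary of the projective model: a geodesic $n$ is perpendicular to $\ell$ if and only if its pole lies in $P$ (the pole of $\ell$ being $P^\perp$, and $B$ nondegenerate), and $n$ is perpendicular to $\Fix(r)$ if and only if its pole lies in $\overline\Fix(r)$ (the pole of $\Fix(r)$ being the root of $r$). Applying both with $n=m$ gives $n_m\in P\cap\overline\Fix(r)$, which is one-dimensional since $\ell$ is not a reflection line (\Cref{lemma:reflections-and-axis}); hence it is spanned by $n_m$. In particular $\overline\Fix(r)\cap P$ is a $B$-positive line, so it lies on the open arc $A\subseteq\mathbb{P}(P)$ of $B$-positive lines, which together with the arc $\ell$ of $B$-negative lines fills $\mathbb{P}(P)\setminus\partial\ell$.

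Next I would bring in the involution $\iota\colon\mathbb{P}(P)\to\mathbb{P}(P)$ sending a line to its $B$-orthogonal complement inside $P$. Since $\ell$ is a geodesic, $B|_P$ is nondegenerate of signature $(1,1)$, so $\iota$ is a genuine involutive projectivity and its fixed points are exactly the two distinct real isotropic lines of $(P,B)$, which are the endpoints $\partial\ell$ of the Coxeter axis. From $\xi(r)=m\cap\ell$ one reads off $\langle\xi(r)\rangle = n_m^{\perp}\cap P = \iota\bigl(\overline\Fix(r)\cap P\bigr)$. As $\iota$ is a bijection, this already settles the first assertion: for horizontal $r_1,r_2$ one has $\overline\Fix(r_1)\cap P = \overline\Fix(r_2)\cap P$ if and only if $\xi(r_1)=\xi(r_2)$.

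For the ordering I would track the cyclic order on the circle $\mathbb{P}(P)$. An involution of a circle with exactly two fixed points is orientation-reversing, so $\iota$ interchanges the arcs $A$ and $\ell$ while reversing cyclic order. By \Cref{def:axial-ordering} the axial ordering traverses $\mathbb{P}(P)$ starting at $\langle q\rangle$ in the positive direction of $\ell$: first along $\ell$ up to the endpoint $\xi_+$ lying in the positive direction, then across all of $A$ from $\xi_+$ to $\xi_-$, then back along $\ell$. Because the lines $\overline\Fix(r)\cap P$ of horizontal reflections $r$ all lie in $A$, the order $\prec$ restricted to them is exactly the order in which they occur while sweeping $A$ from $\xi_+$ to $\xi_-$. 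Transporting this by $\iota$ (which fixes $\xi_+$ and $\xi_-$ and reverses orientation) produces the order along $\ell$ running from $\xi_+$ to $\xi_-$, i.e.\ \emph{from higher to lower} — the reverse of the relation ``above'', since by \Cref{def:coxeter-axis} a point of $\ell$ is above another when it is further in the positive direction, toward $\xi_+$. Therefore $r_1\prec r_2$ if and only if $\xi(r_1)$ is above $\xi(r_2)$ whenever the two lines $\overline\Fix(r_i)\cap P$ differ; and when $\overline\Fix(r_1)\cap P=\overline\Fix(r_2)\cap P$ we have $\xi(r_1)=\xi(r_2)$ and the relative order of $r_1,r_2$ was chosen arbitrarily in \Cref{def:axial-ordering}, so there is nothing more to check.

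The step I expect to be the main obstacle is the orientation bookkeeping in the last paragraph: one must make sure that the direction in which the axial ordering sweeps $A$ corresponds, under the orientation-reversing $\iota$, to the downward direction on $\ell$, and that this is indeed opposite to ``above''. As a safeguard — and possibly as the cleaner route for the final write-up — I would verify the whole correspondence in the upper half-plane model with $\ell$ the positive imaginary axis oriented upward: there a horizontal reflection $r$ has $\Fix(r)$ a Euclidean semicircle of center $\alpha$ and radius $\rho<|\alpha|$, its common perpendicular with $\ell$ is the semicircle of center $0$ and radius $\sqrt{\alpha^2-\rho^2}$, so $\xi(r)=i\sqrt{\alpha^2-\rho^2}$, and the comparison between the axial ordering and the height of $\xi(r)$ reduces to an explicit monotone computation of the kind already performed in the proof of \Cref{lemma:translations}. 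The remaining ingredients — existence and uniqueness of the common perpendicular, the fact that $\xi(r)$ is its foot on $\ell$, and the polarity dictionary relating perpendicularity of geodesics to incidence of poles — are standard facts of plane hyperbolic geometry that I would simply quote.
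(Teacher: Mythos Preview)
Your argument is correct and takes a genuinely different route from the paper's.

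The paper works in the Poincar\'e disk with $\ell$ a diameter: for a horizontal reflection $r$ it lets $p$ be the intersection of the radical axis of $\Fix(r)$ and $\partial\H^2$ with the (Euclidean) extension of $\ell$, identifies $\xi(r)$ as the foot on $\ell$ of the circle centered at $p$ orthogonal to $\partial\H^2$, and then passes to the Klein model to recognise $p$ as the projective point $\overline\Fix(r)\cap P$. The monotonicity claim becomes the observation that pushing $p$ outward increases its power with respect to the unit circle, which pushes $\xi(r)$ inward. By contrast, you stay in the hyperboloid model and package everything into the single involution $\iota$ on $\mathbb{P}(P)$ given by the $B$-orthocomplement inside $P$; the two identities $\overline\Fix(r)\cap P=\langle n_m\rangle$ and $\langle\xi(r)\rangle=\iota(\langle n_m\rangle)$ follow cleanly from the pole--polar dictionary, and the order statement reduces to the topological fact that an involution of a circle with exactly two fixed points swaps the two complementary arcs while sending points near $\xi_+$ to points near $\xi_+$. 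Your approach is more intrinsic and avoids model-switching; the paper's approach is more visual and grounds the argument in classical plane geometry. Your half-plane safeguard (with $\xi(r)=i\sqrt{\alpha^2-\rho^2}$) is correct and would give a third, purely computational proof.

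Two minor comments. First, when you justify that $P\cap\overline\Fix(r)$ is one-dimensional you cite \Cref{lemma:reflections-and-axis}, but for horizontal $r$ the equality $\overline\Fix(r)=P$ is already excluded because $\Fix(r)\cap\ell=\emptyset$; the citation is harmless but unnecessary. Second, the orientation step is fine as written, but the cleanest way to phrase it is exactly what you implicitly use: $\iota$ restricts to a homeomorphism $\overline A\to\overline{\ell\text{-arc}}$ fixing both endpoints $\xi_\pm$, hence is monotone with respect to the ``from $\xi_+$ to $\xi_-$'' orientations of both arcs. Saying this directly avoids any ambiguity about what ``orientation-reversing on the circle'' buys you at the level of the arcs.
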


\begin{proof}    
    Working in the Poincaré model, assume without loss of generality that the Coxeter axis $\ell$ is a diameter of the disk, as in \Cref{fig:labeling-reformulation}. 
    Let $r$ be a horizontal reflection and let $l$ be its fixed line. 
    Then $l$ is represented by a circle that intersects $\partial \H^2$ perpendicularly in two points $q_1, q_2$ on the same side of $\ell$.
    Let $p$ be the point where the radical axis $\rho$ of $l$ and $\partial\H^2$ meets the extension of the Coxeter axis $\ell$ outside the disk (i.e., the intersection of the Coxeter plane $P$ with the plane in which the Poincar\'e disk is placed when compared to the hyperboloid model).
    Now, in order to identify the point $\xi(r)$, let $\gamma$ be the circle centered at $p$ and perpendicular to $l$; if $\rho$ and $\ell$ are parallel, take $\gamma$ to be the axis of the segment connecting $q_1$ and $q_2$.
    Since the power of $p$ with respect to $l$ is the same as with respect to $\partial\H^2$, the circle $\gamma$ is perpendicular to $\partial\H^2$ as well (and of course to $\ell$). Thus, conformality of the Poincar\'e model implies that $\gamma\cap\ell=\xi(r)$.

    We now pass to the Klein model, noting that the canonical transformation from the Poincaré to the Klein disk maps the boundary points (such as $q_1$, $q_2$) identically and preserves the axis $\ell$.
    However, now $\rho$ is the transform of the line $l$, and so $p$ represents $\overline\Fix(r)\cap P$.
    Thus, the position of $p$ along $\ell$ determines the position of $r$ in the axial ordering of \Cref{def:axial-ordering}.
    Since $\xi(r) = \gamma\cap\ell$ only depends on the power of $p$ with respect to the boundary of the model disk, the first claim follows immediately.

    For the second claim, observe that if we move $p$  further away from the center of the disk, its power with respect to the boundary circle increases. Accordingly, the point $\xi(r)$ will move towards the center of the disk, i.e., in the opposite direction with respect to $p$.
\end{proof}

\begin{figure}
    \centering
    \begin{tikzpicture}[scale=3]
        \clip (-1.1, -1.05) rectangle (2, 1.12);
        \draw[gray, thick] (0, 0) circle (1);

        \draw[gray, thin] (0.376, 0.37) -- (0.425, 0.355) -- (0.442, 0.415);
        
        \draw[orange, dashed, thick,
              decoration={markings, mark=at position 0.3 with {\arrow[orange, thick]{>}}},
                postaction={decorate}
            ] (-1, 0) -- (2, 0);
        
        \draw (-0.7, 1.373) -- (2, -0.166);
        
        \node[inner sep=1, fill=black, circle] (p) at (1.7087, 0) {};
        \node[inner sep=1, fill=black, circle] at (0.32368, 0) {};
        
        \node[inner sep=1, fill=black, circle] (q1) at (-0.0456, 0.9996) {};
        \node[inner sep=1, fill=black, circle] (q2) at (0.8833, 0.4704) {};

        \node (c) at (0.5860, 1.0285) {};
        \draw (c) circle (0.632305);
        \draw (p) circle (1.38502);

        \node at (-0.6, 0.08) {$\ell$};
        \node at (-0.12, 0.5) {$l = \Fix(r)$};
        \node at (1.33, 0.3) {$\rho$};
        \node at (-0.7, 0.9) {$\partial\H^2$};
        \node at (1.75, 0.08) {$p$};
        \node at (0.04, 1.07) {$q_1$};
        \node at (0.91, 0.57) {$q_2$};
        \node at (0.45, 0.08) {$\xi(r)$};
        \node at (0.3, -0.5) {$\gamma$};

    \end{tikzpicture}
    \caption{Proof of \Cref{lemma:labeling-reformulation}.}
    \label{fig:labeling-reformulation}
\end{figure}

\begin{remark}
    \label{rem:secondorder}
    The previous lemma gives a reformulation of the ordering $\prec$ of \Cref{def:axial-ordering}: first come the vertical reflections $r$ such that $\Fix(r) \cap \ell$ is above $C_0$, ordered by $\Fix(r) \cap \ell$ using the orientation of $\ell$; then come the horizontal reflections $r$, ordered by $\xi(r)$ using the \textit{reverse} orientation of $\ell$; finally come the vertical reflections such that $\Fix(r) \cap \ell$ is below $C_0$, ordered by $\Fix(r) \cap \ell$ using the orientation of $\ell$.
\end{remark}

\begin{definition}
    Once a Coxeter element $w$ has been fixed, denote by $\varphi (u) := w^{-1}uw$ the conjugation by $w$.
    \label{def:phi}
\end{definition}

The following lemmas describe how $\varphi$ interacts with the axial ordering $\prec$.

\begin{lemma}
    Let $r_1, r_2 \in R_0$ be two horizontal reflections.
    We have $r_1 \prec r_2$ if and only if $\varphi(r_1) \prec \varphi(r_2)$.
    \label{lemma:phi-preserves-order}
\end{lemma}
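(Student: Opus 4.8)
The plan is to reduce the statement to the geometric reformulation of the axial ordering proved in \Cref{lemma:labeling-reformulation}, by checking that conjugation by $w$ is compatible with the construction $r \mapsto \xi(r)$ of \Cref{defxi}. First I would record two structural facts about $\varphi$. Since the reflection length $l_R$ is invariant under conjugation (because $R$ is closed under conjugation) and $\varphi$ fixes $w$, the map $\varphi$ is a length-preserving automorphism of $W$ that restricts to a poset automorphism of $[1,w]$; in particular it permutes the set $R_0$ of reflections of $[1,w]$, so the comparison $\varphi(r_1) \prec \varphi(r_2)$ makes sense. Moreover $\varphi$ carries horizontal reflections to horizontal reflections: from $\varphi(r) = w^{-1}rw$ one gets $\Fix(\varphi(r)) = w^{-1}(\Fix(r))$, and $w$ (a glide reflection with axis $\ell$) maps $\ell$ onto itself, so $\Fix(\varphi(r))$ meets $\ell$ exactly when $\Fix(r)$ does.

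The geometric core is the identity $\xi(\varphi(r)) = w^{-1}(\xi(r))$ for every horizontal $r \in R_0$. This follows at once from the fact that $w^{-1}$ is an isometry of $\H^2$ preserving $\ell$: for every $p \in \ell$ we have $d\big(p, \Fix(\varphi(r))\big) = d\big(p, w^{-1}(\Fix(r))\big) = d\big(w(p), \Fix(r)\big)$, and $p \mapsto w(p)$ is a bijection of $\ell$, so the point of $\ell$ minimising the distance to $\Fix(\varphi(r))$ is the $w^{-1}$-image of the point minimising the distance to $\Fix(r)$, i.e.\ $w^{-1}(\xi(r))$. Now $w$ restricts to a translation of the line $\ell$ (the reflection part of the glide reflection fixes $\ell$ pointwise), hence so does $w^{-1}$, and in particular $w^{-1}$ preserves the relation ``above'' along $\ell$. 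Applying \Cref{lemma:labeling-reformulation} twice — to the horizontal pair $r_1,r_2$ and to the horizontal pair $\varphi(r_1),\varphi(r_2)$ — yields
\[
r_1 \prec r_2 \iff \xi(r_1)\text{ above }\xi(r_2) \iff w^{-1}(\xi(r_1))\text{ above }w^{-1}(\xi(r_2)) \iff \xi(\varphi(r_1))\text{ above }\xi(\varphi(r_2)) \iff \varphi(r_1) \prec \varphi(r_2).
\]

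The only subtlety — and what I would flag as the mild obstacle — is the degenerate case left open in \Cref{lemma:labeling-reformulation}, namely $\xi(r_1) = \xi(r_2)$ (equivalently $\overline\Fix(r_1)\cap P = \overline\Fix(r_2)\cap P$), where \Cref{def:axial-ordering} fixes the order arbitrarily. The displayed computation shows that then also $\xi(\varphi(r_1)) = \xi(\varphi(r_2))$, so both comparisons fall among the arbitrary ones; since $\varphi$ induces on $\ell$ the fixed-point-free translation $x \mapsto w^{-1}(x)$, it permutes the fibres of $r \mapsto \xi(r)$ without fixing any of them, and one may therefore choose the arbitrary total order inside each fibre in a $\varphi$-equivariant way, making the equivalence hold verbatim in this case too. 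All the rest is the equivariance $\xi \circ \varphi = w^{-1}\circ\xi$ together with the monotonicity of $w^{-1}$ on $\ell$, both of which are immediate.
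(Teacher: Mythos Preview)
Your proof is correct and follows essentially the same approach as the paper: establish the equivariance $\xi(\varphi(r)) = w^{-1}(\xi(r))$ and then use that $w^{-1}$ acts on $\ell$ as an order-preserving translation, combined with the reformulation of $\prec$ via $\xi$ (\Cref{lemma:labeling-reformulation}/\Cref{rem:secondorder}). Your version is more detailed than the paper's two-line argument and, in particular, explicitly addresses the degenerate case $\xi(r_1)=\xi(r_2)$ by choosing the tie-breaking order $\varphi$-equivariantly, a point the paper leaves implicit.
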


\begin{proof}
    Conjugating a horizontal reflection $r$ by $w$ has the effect of translating $\xi(r)$ in the negative direction of $\ell$ by an amount equal to the translation length of $w$: $\xi(\varphi(r)) = w^{-1}(\xi(r))$.
    The statement then follows from \Cref{rem:secondorder}.
\end{proof}

\begin{lemma}
    If $r \in R_0$ is a vertical reflection, then $\varphi(r) \prec r$ unless $r$ is among the three $\prec$-smallest reflections.
    If $r$ is horizontal, then $\varphi(r) \succ r$.
    \label{lemma:phi-and-order}
\end{lemma}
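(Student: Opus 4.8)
The plan is to treat the two assertions separately, but both rely on one elementary observation about $\varphi$. Since $\Fix(\varphi(r)) = \Fix(w^{-1}rw) = w^{-1}(\Fix(r))$ and $w^{-1}$ translates the Coxeter axis $\ell$ by its translation length $\mu>0$ in the \emph{negative} direction, passing from $r$ to $\varphi(r)$ slides the fixed line ``downward'' along $\ell$. In particular $\varphi$ preserves the vertical/horizontal dichotomy, and $\xi(\varphi(r)) = w^{-1}(\xi(r))$ lies strictly below $\xi(r)$ on $\ell$ for every reflection $r$. I would also record at the outset that $\varphi$ restricts to an automorphism of the poset $[1,w]$, hence permutes $R_0$: if $w=uv$ is a reduced factorization then, reflection length being conjugation-invariant, $w = v\,(v^{-1}uv)$ is again reduced, so $\varphi(u)=v^{-1}uv\in[1,w]$; the same applied to $\varphi^{-1}$ shows $\varphi$ is a bijection of $[1,w]$. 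Thus $\varphi(r)\in R_0$ and the comparisons $\varphi(r)\prec r$, $\varphi(r)\succ r$ make sense.

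For the \textbf{horizontal case} I would simply invoke \Cref{lemma:labeling-reformulation}: if $r$ is horizontal then so is $\varphi(r)$, the points $\xi(r),\xi(\varphi(r))$ are finite (by \Cref{rmk:reflections-distance}), and $\xi(r)$ lies strictly above $\xi(\varphi(r))$ on $\ell$; since on horizontal reflections $\prec$ is the reverse of the order of the points $\xi(\cdot)$ along $\ell$, this gives $r\prec\varphi(r)$. This is exactly the computation already carried out in the proof of \Cref{lemma:phi-preserves-order}.

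The \textbf{vertical case} is where the real work lies. Let $r$ be vertical and put $x:=\xi(r)=\Fix(r)\cap\ell$; then $\varphi(r)$ is vertical with $\Fix(\varphi(r))\cap\ell=w^{-1}(x)$, strictly below $x$. The plan is to use the block description of $\prec$ from \Cref{rem:secondorder}: first the vertical reflections whose fixed line meets $\ell$ above the reference point $q\in\ell\cap C_0$, ordered from bottom to top; then the horizontal reflections; then the vertical reflections whose fixed line meets $\ell$ below $q$, again from bottom to top. Hence, whenever $x$ and $w^{-1}(x)$ lie on the same side of $q$, both $r$ and $\varphi(r)$ lie in the same block and $w^{-1}(x)$ being lower forces $\varphi(r)\prec r$. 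The only way this can fail is $x$ above $q$ but $w^{-1}(x)$ below $q$, i.e.\ $q<x<w(q)$. Here I would invoke the pedal-triangle argument from the proof of \Cref{lemma:axial-factorization}: the segment $[q,w(q)]\subseteq\ell$ meets exactly three reflection lines, and these are the three lowest reflection lines above $q$ (the next three sit in $[w(q),w^2(q)]$). Since the first block is exactly the block of verticals above $q$, ordered from bottom up, the reflections carried by these three lines are precisely the three $\prec$-smallest elements of $R_0$. This gives $\varphi(r)\prec r$ for every vertical $r$ outside this set of three; and for each of the three, $\varphi(r)$ falls into the final block, so $\varphi(r)\succ r$, showing the exception is sharp.

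I expect the main obstacle to be bookkeeping rather than anything conceptual: one must choose $q$ in the interior of the open chamber $C_0$ so that neither $q$ nor $w(q)$ lies on a reflection line, fix the half-open/genericity conventions so that ``exactly three reflection lines in $[q,w(q)]$'' translates cleanly into ``the three $\prec$-smallest reflections'', and be careful that the three crossing points counted there really are the three lowest above $q$. Once these are pinned down, the argument is a direct translation through \Cref{rem:secondorder} together with the displacement identity $\xi(\varphi(r))=w^{-1}(\xi(r))$.
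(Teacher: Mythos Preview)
Your proposal is correct and follows essentially the same approach as the paper. The paper's proof is terser: for the vertical case it directly states that $\varphi(r)\succ r$ if and only if $\Fix(r)\cap\ell$ lies between $C_0$ and $w(C_0)$, then cites \Cref{lemma:pedal-triangle} for the count of three separating hyperplanes; for the horizontal case it simply invokes \Cref{rem:secondorder}. Your argument unpacks the same reasoning through the block description of \Cref{rem:secondorder} and the displacement identity $\xi(\varphi(r))=w^{-1}(\xi(r))$, and your bookkeeping concerns at the end are exactly the genericity of $q\in C_0$ that the paper leaves implicit.
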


\begin{proof}
    If $r$ is vertical, then $\varphi$ moves $\Fix(r) \cap \ell$ in the negative direction of $\ell$ by the translation length of $w$.
    Therefore, $\varphi(r) \succ r$ if and only if $\Fix(r) \cap \ell$ is between $C_0$ and $w(C_0)$.
    By \Cref{lemma:pedal-triangle}, there are exactly three reflection hyperplanes separating $C_0$ and $w(C_0)$, so the result follows.
    If $r$ is horizontal, we immediately have $\varphi(r) \succ r$ by \Cref{rem:secondorder}.
\end{proof}

\begin{lemma}
    For every vertical reflection $r \in R_0$, there exists a unique $j \in \Z$ such that $\varphi^j(r)$ is one of the three $\prec$-smallest reflections. In addition, $j \geq 0$ if and only if $\Fix(r)$ intersects $\ell$ above $C_0$.
    \label{lemma:conjugate-smallest-reflections}
\end{lemma}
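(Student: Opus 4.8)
The plan is to follow each vertical reflection $r\in R_0$ through its intersection point $\Fix(r)\cap\ell$ with the Coxeter axis, observe that the conjugation map $\varphi$ of \Cref{def:phi} acts on these points as a translation of $\ell$ by a fixed step, and conclude that the three $\prec$-smallest reflections are a fundamental domain for this action; existence and uniqueness of $j$, and the sign of $j$, then drop out of an elementary translation count on $\R$.

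First I would record the routine structural facts. Since $\varphi(g)=w^{-1}gw$ is an automorphism of $W$ fixing $w$ and preserving reflection length, it preserves $[1,w]$, hence the set $R_0$ of reflections below $w$; and since $\Fix(\varphi(r))=w^{-1}(\Fix(r))$ with $w^{-1}(\ell)=\ell$, it also preserves the set of vertical reflections (a vertical reflection of $R_0$ meets $\ell$ at a point of $\H^2$, not at infinity). Parametrize $\ell$ by signed arclength, so that $w$ acts on $\ell$ by $x\mapsto x+\mu$ with $\mu>0$ the translation length of $w$ and the orientation fixed in \Cref{def:coxeter-axis}; for vertical $r\in R_0$ write $c(r)\in\R$ for the coordinate of $\Fix(r)\cap\ell$. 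Then $\Fix(\varphi(r))\cap\ell=w^{-1}(\Fix(r)\cap\ell)$ yields $c(\varphi^{j}(r))=c(r)-j\mu$ for all $j\in\Z$ — precisely the computation already used in the proof of \Cref{lemma:phi-and-order}.

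Next I would pin down the three $\prec$-smallest reflections. Let $\beta_0$ be the coordinate of the upper wall of the fixed axial chamber $C_0$ (one of its two walls met by $\ell$, by \Cref{lemma:axial-factorization} and \Cref{lemma:reflections-and-axis}). Since $C_0$ and $w(C_0)$ are distinct chambers with disjoint interiors, $w(C_0)$ lies above that wall; by \Cref{lemma:pedal-triangle} exactly three reflection hyperplanes separate $C_0$ from $w(C_0)$, and by \Cref{rem:secondorder} (together with \Cref{lemma:phi-and-order}) these three are exactly the three $\prec$-smallest reflections — call them $t_1,t_2,t_3$ (possibly two of them meet $\ell$ at a common vertex, where \Cref{lemma:reflections-and-axis} makes them perpendicular and \Cref{def:axial-ordering} breaks the tie, but all three still rank among the smallest). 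One checks that the three $\prec$-smallest reflections are precisely the vertical reflections of $R_0$ meeting $\ell$ inside the half-open interval $J:=[\beta_0,\beta_0+\mu)$: their crossing points lie at or above $\beta_0$ and at or below the lower wall of $w(C_0)$, while the remaining part of $J$ is the interior of $w(C_0)$, which meets no reflection hyperplane. Since $J$ has length $\mu$, for any vertical $r\in R_0$ there is a unique $j$ with $c(r)-j\mu\in J$; for that $j$, the reflection $\varphi^{j}(r)$ is vertical, lies in $R_0$, and meets $\ell$ inside $J$, hence equals one of $t_1,t_2,t_3$ — and if $\varphi^{j'}(r)$ were also among the three smallest then $c(r)-j'\mu\in J$, forcing $j'=j$. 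Finally $j\ge 0$ iff $c(r)\ge\beta_0$: if $j\ge 0$ then $c(r)\ge c(r)-j\mu\ge\beta_0$, and conversely $c(r)\ge\beta_0$ with $c(r)-j\mu<\beta_0+\mu$ gives $j\mu>c(r)-\beta_0-\mu\ge-\mu$, so $j\ge 0$. As no reflection hyperplane crosses $\ell$ in the interior of $C_0$, the inequality $c(r)\ge\beta_0$ is exactly the statement that $\Fix(r)$ meets $\ell$ above $C_0$, which is the claimed dichotomy.

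The main obstacle is boundary bookkeeping rather than anything conceptual: one must fix once and for all the convention deciding whether a reflection whose fixed line is a wall of $C_0$ counts as lying ``above'' $C_0$ (the upper wall yes, the lower wall no), and then verify that with this convention the fundamental interval $J$ contains the crossing points of exactly the three $\prec$-smallest reflections and of nothing else — in particular handling the degenerate configurations where $\ell$ passes through a vertex of the arrangement, so that two of $t_1,t_2,t_3$, or a wall of $C_0$ and a wall of $w(C_0)$, meet $\ell$ at the same point. Once these conventions are set, the argument is the short translation count sketched above.
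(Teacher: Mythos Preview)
Your proposal is correct and follows essentially the same approach as the paper's proof: the paper simply notes that the three $\prec$-smallest reflections are exactly those whose fixed lines separate $C_0$ from $w(C_0)$ (as established in the proof of \Cref{lemma:phi-and-order} via \Cref{lemma:pedal-triangle}), and then asserts that the statement follows immediately from the translation action of $\varphi$ on crossing points along $\ell$. You have spelled out this immediate consequence in full, including the fundamental-domain argument on the half-open interval $J$ and the careful boundary bookkeeping for the walls of $C_0$; the paper omits these details but the underlying idea is identical.
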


\begin{proof}
    As already noted in the proof of the previous lemma, exactly three reflection hyperplanes separate $C_0$ and $w(C_0)$, and they are the fixed lines of the three $\prec$-smallest reflections.
    The statement immediately follows.
\end{proof}

\begin{lemma}
    Let $r \in R_0$ be a vertical reflection. Its right complement $u = rw$ fixes a vertex of $C_0$ if and only if $r$ is among the three $\prec$-smallest reflections.
    \label{lemma:smallest-reflections}
\end{lemma}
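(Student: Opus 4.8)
The plan is to pin down the three $\prec$-smallest reflections explicitly, dispatch the ``if'' direction with a two-line group computation, and reduce the ``only if'' direction to it by conjugating with $\varphi$. First I would set up notation as in \Cref{lemma:axial-factorization}: let $a_1, a_2, a_3$ be the reflections in the three walls of $C_0$, numbered in the order in which the loop $\pi_{C_0}([x, w(x)])$ touches them for $x \in \ell \cap C_0$, so that $w = a_1 a_2 a_3$. As computed in the proof of \Cref{lemma:axial-factorization}, the three reflection lines actually crossed by the oriented segment $[x, w(x)] \subseteq \ell$ are, in order, $\Fix(a_1)$, $\Fix(a_1 a_2 a_1)$, $\Fix(a_1 a_2 a_3 a_2 a_1)$, and by \Cref{lemma:pedal-triangle} these are the only reflection lines separating $C_0$ from $w(C_0)$; by the proof of \Cref{lemma:conjugate-smallest-reflections} they are therefore exactly the fixed lines of the three $\prec$-smallest reflections, namely $a_1 \prec a_1 a_2 a_1 \prec a_1 a_2 a_3 a_2 a_1$.

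For the forward implication I would just compute the right complements of these three reflections:
\[
a_1 w = a_2 a_3, \qquad (a_1 a_2 a_1)\, w = a_1 a_3, \qquad (a_1 a_2 a_3 a_2 a_1)\, w = a_1 a_2 .
\]
Each of these is a rotation (possibly parabolic) around a vertex of $C_0$, namely $\Fix(a_2) \cap \Fix(a_3)$, $\Fix(a_1) \cap \Fix(a_3)$, and $\Fix(a_1) \cap \Fix(a_2)$ respectively. Hence if $r$ is one of the three $\prec$-smallest reflections, then $u = rw$ fixes a vertex of $C_0$.

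For the converse, suppose $r \in R_0$ is vertical but not among the three $\prec$-smallest reflections. By \Cref{lemma:conjugate-smallest-reflections} there is a unique $j \in \Z$, necessarily $j \neq 0$, with $s := \varphi^j(r) = w^{-j} r w^j$ among the three $\prec$-smallest. Since powers of $w$ commute, $rw = (w^j s w^{-j})\, w = w^j (sw) w^{-j}$, so by the first part $rw$ is conjugate by $w^j$ to a rotation around a vertex $p$ of $C_0$; in particular (using \Cref{lemma:reflections}, which tells us $rw$ is a rotation) the unique fixed point of $rw$ in $\H^2 \cup \partial\H^2$ is $w^j(p)$. It then suffices to show that $w^j(p)$ is not a vertex of $C_0$. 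The key point is that $w^j(p) \neq p$: if $p \in \H^2$ this is immediate because $w$ is a glide reflection and every nonzero power of it acts freely on $\H^2$; and if $p$ is an ideal vertex of $C_0$, then $p$ is fixed by the parabolic product of the two reflections in the walls of $C_0$ through $p$, so $p$ cannot be an endpoint of $\ell$ (see the last paragraph), and again $w^j(p) \neq p$ because $w^j$ fixes only the two endpoints of $\ell$. Finally, since $\bar C_0$ is a strict fundamental domain for the action of $W$, the two distinct points $p$ and $w^j(p)$ of a common $W$-orbit cannot both lie in $\bar C_0$; hence $w^j(p)$ is not a vertex of $C_0$, and $u = rw$ fixes no vertex of $C_0$.

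The one step requiring a little care --- and the main obstacle --- is the claim that an endpoint of the Coxeter axis $\ell$ is never an ideal vertex of a chamber. I would argue this using the standard fact that in a discrete group of hyperbolic isometries a parabolic fixed point is never fixed by a hyperbolic element (otherwise the subgroup generated by the parabolic and the hyperbolic would fail to be discrete): the endpoints of $\ell$ are fixed by the hyperbolic isometry $w$, whereas every ideal vertex of a chamber is a parabolic fixed point of $W$, being fixed by the product of the two reflections in the two asymptotically parallel walls through it; \Cref{lemma:reflections-and-axis} already excludes the remaining degenerate possibility that $\ell$ itself is a reflection line.
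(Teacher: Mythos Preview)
Your proof is correct and takes a genuinely different route from the paper's.

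The paper argues geometrically: using \Cref{lemma:reflections}, it identifies the fixed point of $u=rw$ as the vertex $p$ opposite $\Fix(r)$ in the axial chamber $C$ immediately \emph{below} $\Fix(r)\cap\ell$, and then determines when $p$ is a vertex of $C_0$ by locating $C$ relative to $C_0$ and $w(C_0)$, invoking \Cref{lemma:axial-vertices-orbits} (which implies $C_0$ and $w(C_0)$ share no vertex). Your approach is algebraic: you name the three $\prec$-smallest reflections explicitly as $a_1$, $a_1a_2a_1$, $a_1a_2a_3a_2a_1$, verify the ``if'' direction by the one-line identities $a_1w=a_2a_3$, $(a_1a_2a_1)w=a_1a_3$, $(a_1a_2a_3a_2a_1)w=a_1a_2$, and then reduce the converse to this via conjugation by $w^j$ using \Cref{lemma:conjugate-smallest-reflections}. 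Your method makes the bijection between the three smallest reflections and the three vertices of $C_0$ completely explicit, at the cost of the extra step showing $w^j(p)\neq p$ for ideal vertices $p$; the paper's argument hides this same issue inside its appeal to \Cref{lemma:axial-vertices-orbits}.

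One simplification for your ``obstacle'': rather than invoking the parabolic/hyperbolic dichotomy for discrete groups, you can observe directly that no reflection line meets $\ell$ at infinity. Indeed, a vertical reflection line already meets $\ell$ in $\H^2$, so sharing an ideal endpoint with $\ell$ would force it to coincide with $\ell$, contradicting \Cref{lemma:reflections-and-axis}; and horizontal reflection lines stay bounded away from $\ell$ by \Cref{rmk:reflections-distance}. Since an ideal vertex of $C_0$ lies on two reflection lines, it is therefore not an endpoint of $\ell$, and $w^j$ moves it.
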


\begin{proof}
    Let $C$ be the axial chamber immediately below $\Fix(r) \cap \ell$.
    By \Cref{lemma:reflections}, $u$ is a rotation around the vertex $p$ of $C$ opposite to the wall $\Fix(r)$.
    
    Suppose that $p$ is a vertex of $C_0$.
    In particular, $\Fix(r)$ intersects $\ell$ above $C_0$, because otherwise $p$ and $C_0$ would be on opposite sides of $\Fix(r)$.
    By \Cref{lemma:axial-vertices-orbits}, $C_0$ and $w(C_0)$ have no vertices in common, so $C$ is between $C_0$ (included) and $w(C_0)$ (excluded).
    Therefore, $\Fix(r)$ intersects $\ell$ below $w(C_0)$.
    We conclude that $r$ is one of the three $\prec$-smallest reflections.
    
    Conversely, suppose that $r$ is one of the three $\prec$-smallest reflections. Then $\Fix(r)$ intersects $\ell$ between $C_0$ and $w(C_0)$, so $C$ is between $C_0$ (included) and $w(C_0)$ (excluded).
    Therefore, $p$ and $w(C_0)$ lie on opposite sides of $\Fix(r)$, so $p$ is not a vertex of $w(C_0)$. By \Cref{lemma:axial-vertices-orbits}, $p$ is a vertex of $C_0$.
\end{proof}

\subsection{EL-labeling}
Next, we prove that the interval $[1,w]$ admits an EL-labeling (see \Cref{sec:EL}), so in particular, it is EL-shellable.
Note that noncrossing partition posets are known to be EL-shellable in the spherical case \cite{athanasiadis2007shellability} and in the affine case \cite{paolini2021proof}.

\begin{definition}
    The natural labeling of $[1,w]$ by reflections is the labeling $E([1,w]) \to R_0$ which maps a poset cover $u \lessdot ur$ to the reflection $r \in R_0$.
    The set of reflections $R_0$ is totally ordered by the axial ordering $\prec$ constructed in \Cref{sec:axial-ordering}.
\end{definition}

\begin{theorem}
    The natural labeling of $[1, w]$ by reflections ordered by $\prec$ is an EL-labeling.
    In other words, every element $u \in [1, w]$ has a unique $\prec$-increasing factorization into reflections and this factorization is lexicographically minimal.
    \label{thm:shellability}
\end{theorem}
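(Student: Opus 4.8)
The plan is to verify the two defining properties of an EL-labeling on every interval of $[1,w]$: existence of a $\prec$-increasing maximal chain, and its lexicographic minimality (uniqueness then follows). I would begin with a reduction. For any interval $[x,y]\subseteq[1,w]$, the label sequences of the maximal chains of $[x,y]$ coincide with those of $[1,x^{-1}y]$ (the cover $xr'\lessdot xr'r''$ carries the label $r''$, exactly as $r'\lessdot r'r''$ does in $[1,x^{-1}y]$). Since $[1,w]$ has rank $3$, the only intervals that matter are $[1,w]$ itself, the rank-two intervals $[1,u]$ with $l_R(u)=2$, and the rank-two intervals $[r,w]$ — and the last have the same maximal-chain labels as $[1,rw]$ with $rw\in[1,w]$. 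Hence it is enough to prove the ``in other words'' reformulation: every $u\in[1,w]$ has a unique $\prec$-increasing reduced factorization into reflections, which moreover is lexicographically minimal. I would prove this by induction on $l_R(u)$.

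The cases $l_R(u)\le 1$ are trivial. For $l_R(u)=2$, the element $u$ is a rotation or a translation (\Cref{lemma:rotations,lemma:reflections-below-translation}), so $[1,u]$ is the noncrossing partition poset of a dihedral reflection subgroup $W_u$ with Coxeter element $u$. If $u$ is a rotation, then $\prec$ restricted to $R_0\cap[1,u]$ is a reflection ordering of $W_u$ (\Cref{rmk:dihedral-reflection-orderings}); using the orientation of $\ell$ one checks it is compatible with $u$, which makes the natural labeling of the dihedral poset $[1,u]$ an EL-labeling whose unique increasing chain is $1\lessdot\min_\prec(R_0\cap[1,u])\lessdot u$. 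If $u$ is a translation, the reflections below $u$ are $(r_i)_{i\in\Z}$ ordered along $\Min(u)$ with $u=r_{i+1}r_i$ for all $i$ and $\Fix(r_i)\perp\Min(u)$; the chain through $r_i$ has label sequence $(r_i,r_{i-1})$. Using \Cref{rem:secondorder} together with the way $\xi(r_i)$ and $\Fix(r_i)\cap\ell$ vary as $\Fix(r_i)\cap\Min(u)$ sweeps $\Min(u)$ — monotonically on each side of the (unique) point where $\Min(u)$ meets $\ell$, with a single ``jump'' across it — one finds that the $r_i$ split into at most three $\prec$-consecutive blocks (vertical with foot above $C_0$, then horizontal, then vertical with foot below $C_0$) inside each of which $\prec$ is increasing in $i$; an index $i$ with $r_i\prec r_{i-1}$ can occur only at the block boundary where the ``vertical-below'' block is immediately followed by the ``vertical-above'' block, there is exactly one such $i$, the corresponding $r_i$ is $\min_\prec(R_0\cap[1,u])$, and its chain is the unique increasing maximal chain and is lexicographically first.

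For $l_R(u)=3$, i.e.\ $u=w$, I reduce to the rank-two case. Let $\rho_1\prec\rho_2\prec\rho_3$ be the three $\prec$-smallest reflections; by \Cref{lemma:pedal-triangle} and the discussion after it they are vertical, their fixed lines separate $C_0$ from $w(C_0)$, one has $w=\rho_3\rho_2\rho_1$, and $\rho_1$ is a wall of $C_0$ with $\rho_1w$ a rotation about the vertex of $C_0$ opposite $\Fix(\rho_1)$ (\Cref{lemma:reflections,lemma:smallest-reflections}). A reduced factorization $w=r_1r_2r_3$ is the label sequence of $1\lessdot r_1\lessdot r_1r_2\lessdot w$, and $(r_2,r_3)$ is a reduced factorization of $r_1w\in[1,w]$; so by the case $l_R=2$, this factorization is $\prec$-increasing iff $r_1\prec r_2$ and $(r_2,r_3)$ is the unique increasing factorization of $r_1w$, which forces $r_2=\min_\prec(R_0\cap[1,r_1w])$. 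Thus for each reflection $r_1<w$ there is at most one increasing factorization of $w$ beginning with $r_1$, existing precisely when $r_1\prec\min_\prec(R_0\cap[1,r_1w])$. For $r_1=\rho_1$ this holds (as $\rho_1\notin[1,\rho_1w]$ and $\rho_1=\min_\prec R_0$), producing the increasing factorization $(\rho_1,s_\ast,t_\ast)$ with $(s_\ast,t_\ast)$ the increasing factorization of $\rho_1w$; it is lexicographically minimal since its first letter is $\min_\prec R_0$ and its tail is lexicographically minimal in $[\rho_1,w]\cong[1,\rho_1w]$ by the rank-two case.

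What remains — and is where I expect the main obstacle to lie — is uniqueness: that no reflection $r_1\neq\rho_1$ satisfies $r_1\prec\min_\prec(R_0\cap[1,r_1w])$, i.e.\ that the rank-two complement $r_1w$ always has some reflection below it that is $\prec r_1$. For $r_1\in\{\rho_2,\rho_3\}$ this is immediate from $w=\rho_3\rho_2\rho_1$: then $\rho_3w=\rho_2\rho_1$ has $\rho_1,\rho_2$ below it and $\rho_2w=(\rho_2\rho_3\rho_2)\rho_1$ has $\rho_1$ below it, and $\rho_1,\rho_2\prec\rho_3$, $\rho_1\prec\rho_2$. For the remaining $r_1$ (a vertical reflection whose foot lies beyond $\rho_3$, or a vertical reflection with foot below $C_0$, or a horizontal reflection) one must locate $\min_\prec(R_0\cap[1,r_1w])$ relative to $r_1$: for horizontal $r_1$ the complement $r_1w$ is a translation whose $\prec$-smallest reflection lies in a ``vertical-above'' or early ``horizontal'' block, hence is $\prec r_1$ by the trichotomy of \Cref{rem:secondorder}; for vertical $r_1$ one uses that $r_1w$ fixes a vertex of $C_0$ only when $r_1\in\{\rho_1,\rho_2,\rho_3\}$ (\Cref{lemma:smallest-reflections}) and transports the situation back to the three smallest reflections via the conjugation $\varphi$, invoking \Cref{lemma:phi-preserves-order,lemma:phi-and-order,lemma:conjugate-smallest-reflections}. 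The delicate point throughout is the bookkeeping forced by the three-way ``above $C_0$ / horizontal / below $C_0$'' partition of $R_0$ interacting with complements and with $\varphi$; the translation sub-case — pinning down the $\prec$-minimum among the reflections below a translation — is the technical heart already embedded in the $l_R=2$ analysis, and \Cref{lemma:five-lines} is the geometric input that keeps that analysis under control.
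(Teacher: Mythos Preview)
Your reduction to intervals $[1,u]$ and the treatment of rotations via \Cref{rmk:dihedral-reflection-orderings} match the paper. The divergence begins with translations and widens at $u=w$.

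\textbf{Translations.} The paper avoids your vertical/horizontal block analysis entirely. Working in the Klein model with both $\ell$ and $\Min(u)$ drawn as diameters, every $\Fix(r_i)$ is a chord perpendicular to $\Min(u)$, so its extension meets the extension of $\ell$ at a point that moves monotonically as $i$ increases. Since the two axes form an angle $<\tfrac{\pi}{2}$ (\Cref{lemma:translations}), this immediately says that if $u=r_1r_2$ then $r_1$ comes right after $r_2$ in the \emph{cyclic} restriction of $\prec$, hence $r_1\succ r_2$ except at a single wrap-around. One line, one picture, no cases on whether the $r_i$ are vertical or horizontal. Your block description is plausible but is working much harder for the same conclusion, and the ``single jump'' claim would itself need justification.

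\textbf{The case $u=w$.} Here the paper takes a genuinely different route from yours, and the part you flag as ``the main obstacle'' is exactly where your sketch becomes too thin. You try to show, for every $r_1\neq\rho_1$, that $\min_\prec(R_0\cap[1,r_1w])\prec r_1$. The paper never computes that minimum. Instead it takes an arbitrary increasing factorization $w=r_1r_2r_3$ and splits on the type of $r_1$ (and symmetrically $r_3$). If $r_1$ is vertical, \Cref{lemma:reflections} identifies the axial chamber $C$ immediately below $\Fix(r_1)\cap\ell$ and \Cref{lemma:axial-factorization} gives $w=r_1r_2'r_3'$ with $\Fix(r_3')\cap\ell$ below $\Fix(r_1)\cap\ell$; then the rotation case forces $r_2\preceq r_3'\prec r_1$ unless $C=C_0$, which pins the factorization to $abc$. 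If both $r_1$ and $r_3$ are horizontal then so is $r_2$ (by the trichotomy of \Cref{rem:secondorder}), and a short parity argument using the opposite-sides fact from the translation case produces a contradiction via $\Fix(r_1r_2r_1)=r_1(\Fix(r_2))$.

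Your proposed handling of the horizontal-$r_1$ subcase (``the $\prec$-smallest reflection below $r_1w$ lies in a vertical-above or early horizontal block'') is not substantiated, and the vertical-$r_1$ subcase via conjugation by $\varphi$ is too vague: $\varphi$ does not respect $\prec$ on vertical reflections in the simple way it does on horizontal ones, so transporting to $\{\rho_1,\rho_2,\rho_3\}$ is not straightforward. Also, \Cref{lemma:five-lines} plays no role in the paper's proof of this theorem; it is used only later, in the discrete-Morse-theory section.
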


\begin{proof}
\emph{Case 1:} $u$ is a rotation.
Then $u$ is a rotation around a point $p \in \H^2 \cup \partial \H^2$ and it is a Coxeter element for the (parabolic) dihedral subgroup $W_u$ that fixes $p$ by \Cref{lemma:rotations}.
If $u$ is a rotation through an angle of $\pi$, then there are exactly two factorizations $u = r_1r_2 = r_2r_1$; exactly one of them is $\prec$-increasing and lexicographically smaller than the other one.

Suppose from now on that $u$ is a rotation through an angle less than $\pi$.
The restriction of $\prec$ to $R_0 \cap [1, u]$ is a reflection ordering (see \Cref{rmk:dihedral-reflection-orderings} and \Cref{fig:rotation-and-axis}).
By \cite[Theorem 3.5]{athanasiadis2007shellability}, it is enough to show that this ordering is compatible with $u$, i.e., that there is at least one factorization $u = r_1r_2$ such that $r_1$ comes immediately after $r_2$ in the cyclic order $\prec'$ where the last reflection comes immediately before the first (see the caption of \Cref{fig:rotation-and-axis}).
This is true if $r_1$ and $r_2$ are as in the proof of \Cref{lemma:rotations}, because $\Fix(r_2)$ intersects $\ell$ immediately below the axial chamber $C$.

\emph{Case 2:} $u$ is a translation.
By \Cref{lemma:reflections-below-translation}, $u$ is a product of any two consecutive reflections below $u$.
Recall from \Cref{lemma:translations} that the translation axis of $u$ and the Coxeter axis $\ell$ intersect with an angle less than $\frac{\pi}{2}$.
Then, if $u = r_1r_2$, the reflection $r_1$ comes immediately after $r_2$ in the cyclic order $\prec'$ on $R_0 \cap [1, u]$ (this is easily seen in the Klein model, see \Cref{fig:shellability-translations}).
Therefore $r_1 \succ r_2$, unless $r_1$ is the $\prec$-first reflection and $r_2$ is the last.

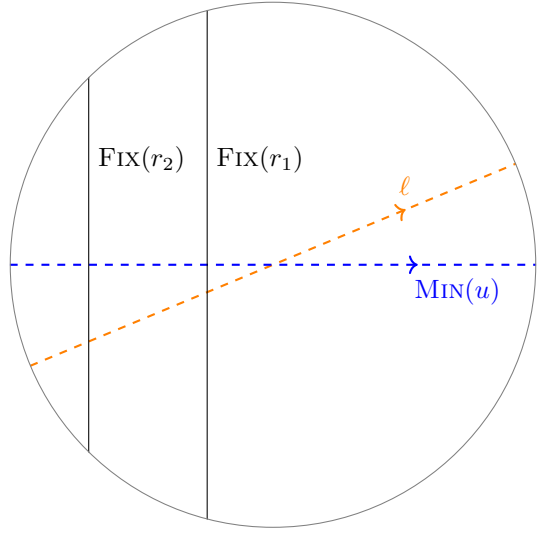
\begin{figure}
    \centering
    \begin{tikzpicture}[scale=3.5]
    	\clip (0,0) circle (1);
    	
    	\draw[orange, dashed, thick,
    	decoration={markings, mark=at position 0.71 with {\arrow[orange, thick]{>}}},
    	postaction={decorate}
    	] (-1.2, -0.5) -- (1.2, 0.5);

    	\draw[blue, dashed, thick,
		decoration={markings, mark=at position 0.775 with {\arrow[blue, thick]{>}}},
		postaction={decorate}
		] (-1, 0) -- (1, 0);
    	
    	\node[orange] (l) at (0.5, 0.3) {$\ell$};
    	\node[blue] at (0.7, -0.1) {$\Min(u)$};
    	
    	\draw (-0.7, 1) -- (-0.7, -1);
    	\node at (-0.5, 0.4) {$\Fix(r_2)$};
    	
    	\draw (-0.25, 1) -- (-0.25, -1);
		\node at (-0.05, 0.4) {$\Fix(r_1)$};

    	\draw[black!50, thick] (0,0) circle (1);
    \end{tikzpicture}
    \caption{
  	Case 2 of the proof of \Cref{thm:shellability}, depicted in the Klein model.
  	Both the Coxeter axis $\ell$ and the translation axis $\Min(u)$ are taken as diameters of the disk.
    Then any reflection $r \leq u$ fixes a line perpendicular to $\Min(u)$.
    In addition, the oriented axes $\ell$ and $\Min(u)$ form an angle smaller than $\frac{\pi}{2}$.
	Therefore, if $u = r_1r_2$, then $\Fix(r_1)$ intersects (the extension of) $\ell$ above $\Fix(r_2)$.
	}
    \label{fig:shellability-translations}
\end{figure}

\emph{Case 3:} $u = w$.
The lexicographically minimal factorization $w = abc$ is increasing by construction (if $a$ and $b$ commute, or $b$ and $c$ commute, we may need to swap $a$ and $b$, or $b$ and $c$, for this to become true).
Therefore, it is enough to show that every increasing factorization $w=r_1r_2r_3$ coincides with the factorization $w = abc$.

Suppose that $r_1$ is vertical and let $C$ be the axial chamber immediately below $\Fix(r_1)$.
By \Cref{lemma:reflections}, $v = r_2r_3$ is a rotation around the vertex $p$ of $C$ opposite to $\Fix(r_1)$.
In addition, we can write $v = r_2' r_3'$ where $\Fix(r_2')$ and $\Fix(r_3')$ are the two walls of $C$ containing $p$.
By \Cref{lemma:axial-factorization}, $\Fix(r_3')$ intersects the Coxeter axis $\ell$ below $\Fix(r_1)$, so $r_3' \prec r_1$ unless $C = C_0$.
By Case 1, since $r_2 \prec r_3$, we have that $r_2$ is the $\prec$-smallest reflection below $v$; in particular, $r_2 \preceq r_3'$.
Putting this all together, if $C \neq C_0$, we get $r_2 \preceq r_3' \prec r_1$ which is a contradiction.
If $C = C_0$, the factorization $r_1r_2r_3$ coincides with $abc$.
A similar argument can be carried out if $r_3$ is vertical.

Suppose now that $r_1$ and $r_3$ are both horizontal.
Since $r_1 \prec r_2 \prec r_3$, we have that $r_2$ is also horizontal.
By Case 2, if $t = rr'$ is the increasing factorization of a translation and both $r$ and $r'$ are horizontal, then $\Fix(r)$ and $\Fix(r')$ are on opposite sides of the Coxeter axis.
Applying this observation to the three factorizations $r_1r_2$ (translation), $r_2r_3$ (translation), and $r_1r_3$ (translation or rotation), we find that $r_1r_3$ has to be a rotation.
Then its left complement $r_1r_2r_1$ is a vertical reflection.
However, $\Fix(r_1)$ and $\Fix(r_2)$ are on opposite sides of the Coxeter axis $\ell$, so $\Fix(r_1r_2r_1) = r_1(\Fix(r_2))$ and $\ell$ are on opposite sides of $\Fix(r_1)$, and in particular they cannot intersect.
This is a contradiction.
\end{proof}

\section{The interval complex $K_W$ and its subcomplexes}
\label{sec:interval-complex}

Throughout this section, let $(W, S)$ be a non-spherical irreducible Coxeter system of arbitrary rank $n$, where $S = \{s_1, s_2, \dotsc, s_n\}$ is the set of reflections with respect to some chamber $C_0$.
Fix a Coxeter element $w = s_1s_2\dotsm s_n$.
As usual, $[1,w]$ denotes the interval between $1$ and $w$ in the right Cayley graph of $W$ with respect to the set of all reflections $R \subseteq W$.
Let $K_W$ be the interval complex associated with $[1,w]$ (see \Cref{sec:garside-structures}).

For every $T \subseteq S$, consider the Coxeter element $w_T$ of the standard parabolic subgroup $W_T$ consisting of the product of the elements of $T$ in the same relative order as in the sequence $s_1, s_2, \dotsc, s_n$.
As shown in \cite[Section 5]{paolini2021proof}, the interval $[1,w_T]$ is the same in $W_T$ and in $W$ (using all reflections as generators) and the interval complex $K_{W_T}$ is naturally a subcomplex of $K_W$.
Let $X_W' \subseteq K_W$ be the subcomplex defined as
\[
    X_W' = \bigcup_{\substack{\phantom{\frac{a}{b}}{T \subseteq S}\phantom{\frac{a}{b}} \\ \text{$W_T$ finite}}} K_{W_T}.
\]

\begin{theorem}[{\cite[Theorem 5.5]{paolini2021proof}}]
    For any Coxeter system $(W, S)$ and Coxeter element $w$, the subcomplex $X_W' \subseteq K_W$ is homotopy equivalent to the orbit configuration space $Y_W$.
    \label{thm:XW'}
\end{theorem}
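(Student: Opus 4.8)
The plan is to realize both $X'_W$ and a space homotopy equivalent to $Y_W$ as homotopy colimits of one and the same diagram, indexed by the poset $\mathcal{T}=\{T\subseteq S \mid W_T \text{ finite}\}$, and then to compare the two diagrams groupwise. As a first step, recall that $Y_W$ is homotopy equivalent to the Salvetti complex $X_W$ \cite{salvetti1987topology, salvetti1994homotopy}, which is the union of its subcomplexes $X_{W_T}$, $T\in\mathcal{T}$, with $X_{W_T}\cap X_{W_{T'}}=X_{W_{T\cap T'}}$. On the other side, $X'_W=\bigcup_{T\in\mathcal{T}}K_{W_T}$ by definition, the pieces being subcomplexes of $K_W$; since a cell of $K_W$ lies in $K_{W_T}$ exactly when all of its entries lie in $W_T$, and $W_T\cap W_{T'}=W_{T\cap T'}$ for parabolic subgroups, one gets $K_{W_T}\cap K_{W_{T'}}=K_{W_{T\cap T'}}$ (invoking \cite[Section 5]{paolini2021proof} to identify the relevant intervals). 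In both cases the pieces are subcomplexes and the family is closed under intersection, so each union is the colimit -- and, the inclusions being cofibrations, the homotopy colimit -- of the corresponding diagram $\mathcal{T}\to\mathbf{Top}$, namely $T\mapsto X_{W_T}$ and $T\mapsto K_{W_T}$.

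\emph{Groupwise comparison.} Fix $T\in\mathcal{T}$. Then $W_T$ is a finite Coxeter group, so $X_{W_T}\simeq Y_{W_T}$ is a classifying space for $G_{W_T}$ by Deligne \cite{deligne1972immeubles}. On the other hand $[1,w_T]$ is the noncrossing partition lattice of a finite Coxeter group, hence a balanced lattice, so by \Cref{thm:garside} the interval complex $K_{W_T}$ is a classifying space for the dual Artin group $(W_T)_{w_T}$, and the latter is isomorphic to $G_{W_T}$ by Bessis \cite{bessis2003dual}. Thus $K_{W_T}$ and $X_{W_T}$ are both $K(G_{W_T},1)$'s. Moreover, the structure maps of both diagrams induce, on fundamental groups, the standard parabolic inclusions $G_{W_T}\hookrightarrow G_{W_{T'}}$ for $T\subseteq T'$: for the Salvetti diagram this is classical, and for the interval-complex diagram it follows from the fact that $K_{W_T}\hookrightarrow K_{W_{T'}}$ is a subcomplex inclusion carrying the generating loop $[r]$ of each reflection to itself, together with compatibility of the dual-to-standard isomorphism for finite Coxeter groups with parabolic inclusions. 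Hence the two diagrams realize one and the same diagram of groups $T\mapsto G_{W_T}$ by aspherical spaces.

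\emph{Conclusion.} A diagram of aspherical CW complexes over a poset, with structure maps inducing a fixed diagram of (injective) group homomorphisms on $\pi_1$, has a homotopy colimit whose homotopy type depends only on that diagram of groups: one may rectify to a strictly commuting diagram of genuine $K(\pi,1)$'s by replacing each space by the classifying space of its fundamental group and each map by the induced one, and an objectwise equivalence between cofibrant diagrams induces an equivalence on homotopy colimits. Applying this to the two diagrams above yields $X'_W=\mathrm{hocolim}_{\mathcal{T}}K_{W_{(-)}}\simeq \mathrm{hocolim}_{\mathcal{T}}X_{W_{(-)}}=X_W\simeq Y_W$, as desired.

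\emph{Main obstacle.} The delicate point is the groupwise-comparison step, specifically making precise that the two diagrams agree strongly enough to transport the equivalence to the homotopy colimit -- i.e.\ the coherent compatibility of the objectwise equivalences $K_{W_T}\simeq X_{W_T}$ with all the inclusion maps. If one wishes to avoid the homotopy-colimit formalism, an equivalent route is an induction on $|\mathcal{T}|$ via the gluing lemma for homotopy equivalences: given an equivalence on $\bigcup_{T'\subsetneq T}K_{W_{T'}}$ and one on $K_{W_T}$, glue them over the intersection $\bigl(\bigcup_{T'\subsetneq T}K_{W_{T'}}\bigr)\cap K_{W_T}$; but arranging that the two equivalences agree there up to homotopy is exactly the same coherence issue. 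Everything else in the argument is formal.
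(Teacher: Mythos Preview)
This theorem is not proved in the present paper; it is quoted verbatim from \cite[Theorem 5.5]{paolini2021proof} and used as a black box. There is no ``paper's own proof'' to compare against here.

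That said, your outline is the natural strategy and is in the spirit of the argument in \cite{paolini2021proof}: decompose both $X_W'$ and the Salvetti complex $X_W$ as unions (hence homotopy colimits) of subcomplexes indexed by the poset $\mathcal{T}=\{T\subseteq S\mid W_T\text{ finite}\}$, observe that each piece is a $K(G_{W_T},1)$ (Deligne for $X_{W_T}$, Bessis plus Garside theory for $K_{W_T}$), and conclude by comparing diagrams. You have also correctly isolated the only substantive step: arranging the objectwise equivalences $K_{W_T}\simeq X_{W_T}$ into a \emph{coherent} natural transformation of diagrams over $\mathcal{T}$, so that the equivalence passes to the homotopy colimit. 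Everything else is indeed formal.

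Two small points worth tightening. First, your claim that $K_{W_T}\cap K_{W_{T'}}=K_{W_{T\cap T'}}$ is not an immediate consequence of $W_T\cap W_{T'}=W_{T\cap T'}$: membership of a simplex $[x_1|\dotsb|x_d]$ in $K_{W_T}$ is the condition $x_1\dotsm x_d\leq w_T$ in $[1,w]$, so what you need is $[1,w_T]\cap[1,w_{T'}]=[1,w_{T\cap T'}]$, i.e.\ $w_T\wedge w_{T'}=w_{T\cap T'}$. This is true, but it is part of the content of \cite[Section 5]{paolini2021proof} rather than a triviality. Second, your appeal to ``rectifying to classifying spaces'' is correct in principle but hides exactly the coherence you flagged: one still needs that the Bessis isomorphisms $(W_T)_{w_T}\cong G_{W_T}$ are compatible with the parabolic inclusions on both sides, which requires a short check.
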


A deformation retraction $K_W \searrow X_W'$ was a crucial step in the proof of the $K(\pi, 1)$ conjecture in the affine case \cite[Theorem 8.14]{paolini2021proof} and it is natural to ask whether such a deformation retraction can be constructed for a general $W$ (see \cite[Question 5.5]{paolini2021dual}).
In this section, we introduce three additional subcomplexes of $K_W$ that will be useful in answering the previous question when $W$ has rank three (\Cref{sec:dmt-rank-three}).
The overall structure of the retraction argument is summarized in \Cref{treasuremap}. The reader might find it helpful to refer to \Cref{treasuremap} when reading the definitions of the three new complexes below.
We define the subcomplexes in full generality, with the hope that they will also prove useful when dealing with arbitrary Coxeter groups.

\begin{figure}
    \begin{center}
    \def\sz{\footnotesize}
\begin{tikzpicture}
\node (b1) at (-2,.8) {\sz{Sec.\ \ref{sec:matching}-\ref{sec:acyclicity}}};
\node (d1) at (0,.8) {\sz{Prop.\ \ref{prop:K''-K'}} ${}^*$};
\node (f1) at (2,.8) {\sz{Sec.\ \ref{sec:matching-N}}};
\node (a) at (-3,-.5) {$K_W$};
\node (b) at (-2,-.5) {$\searrow$};
\node (c) at (-1,-.5) {$\Klong$};
\node (d) at (0,-.5) {$\searrow$};
\node (e) at (1,-.5) {$\Kshort$};
\node (f) at (2,-.5) {$\searrow$};
\node (g) at (3,-.5) {$\Ksparse$};
\node[anchor=center] (t) at (0,-.7) {\phantom{For every infinite parabolic subgroup of $W$.}};
\draw[-,dotted] (b1.south) -- (b.north);
\draw[-,dotted]%
(d1.south) -- (d.north);
\draw[-,dotted]%
 (f1.south) -- (f.north);
\node (C) at (-.8,-3) {$K_W$};
\node (D) at (0,-3) {$\searrow$};
\node (E) at (0.8,-3) {$X_W'$};
\node (F) at (1.5,-3) {$\simeq$};
\node (G) at (2.2,-3) {$Y_W$};
\draw [decorate,
    decoration = {calligraphic brace}] (t.south east) --  (t.south west);
\draw[->,dotted]%
    (0,-1.2) -- (0,-2);
\draw[-,dotted]%
    (0,-2) -- (D.north);
    \node[anchor=west] (p) at (0,-1.9) {\sz{Prop.\ \ref{prop:inductive-collapse}} ${}^*$};
\node (k) at (-2.6,-3) {$K(W_w,1)$};
\node (l) at (-1.5,-3) {$\simeq$};
\node (l1) at (-1.5,-4) {\sz{Thm.\ \ref{thm:garside} (Garside)}};
\node (F1) at (1.5,-4) {\sz{Thm.\ \ref{thm:XW'}} ${}^*$};
\draw[dotted] (l1.north) -- (l.south);
\draw[dotted] (F1.north) -- (F.south);
\end{tikzpicture}
    \end{center}
    \caption{
    	Relationships between the various subcomplexes of $K_W$ and strategy of the proof of \Cref{thm:K-X'}.
   		The statements marked with an asterisk are valid for a general Coxeter group $W$.
   		Note that \Cref{prop:inductive-collapse} requires the collapses on the top row of the picture to hold for every infinite standard parabolic subgroup of $W$. 
	}
    \label{treasuremap}
\end{figure}

\subsection{The subcomplex $\Ksparse$ and inductive collapses}\label{X2}
Consider the following subcomplex of $K_W$:
\[
    \Ksparse = \bigcup_{T \subsetneq S} K_{W_T}.
\]
Since $W$ is infinite, every finite standard parabolic subgroup $W_T$ is a proper subgroup of $W$ and therefore $X_W' \subseteq \Ksparse$.
Note that $X_W'=\Ksparse$ whenever $W$ is affine or compact hyperbolic because every proper parabolic subgroup of $W$ is finite.
Also, note that a simplex $[x_1|\dotsb|x_d]$ of $K_W$ belongs to $\Ksparse$ if and only if the product $x_1\dotsm x_d$ fixes a vertex of $C_0$.
The nickname ``sparse'' is motivated in \Cref{fig:subcomplexes} below.

The reason for introducing $\Ksparse$ is given by the following criterion, which allows us to deformation retract $K_W$ onto $X_W'$ inductively.

\begin{proposition}\label{prop:collassoT}
    Suppose that $K_{W_T} \searrow \Ksparse[W_T]$ for every infinite standard parabolic subgroup $W_T$ of $W$. Then $K_W \searrow X_W'$.
    \label{prop:inductive-collapse}
\end{proposition}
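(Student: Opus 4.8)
The plan is to realize the deformation retraction by a single proper, regular, acyclic matching on the cell poset $\F(K_W)$ whose critical cells are exactly those of $X_W'$, and then to apply \Cref{thm:dmt}. All the bookkeeping is done through the \emph{support map} $\mathrm{supp}\colon \F(K_W)\to 2^S$ (subsets of $S$ ordered by inclusion) sending a simplex $\sigma=[x_1|\dotsb|x_d]$ to the smallest $T\subseteq S$ with $x_1\dotsm x_d\in W_T$. By the results of \cite[Section 5]{paolini2021proof} this equals $\bigcup_i \mathrm{supp}(x_i)$, it is order-preserving (hence a poset map), and for every $T\subseteq S$ the fiber $\mathrm{supp}^{-1}(T)\subseteq\F(K_W)$ coincides with the corresponding fiber of $\F(K_{W_T})$ (using $[1,w]\cap W_T=[1,w_T]$ and the fact that $K_{W_T}$ is a subcomplex of $K_W$). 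Note that $\sigma\in X_W''$ exactly when $\mathrm{supp}(\sigma)\subsetneq S$ — equivalently, $x_1\dotsm x_d$ fixes a vertex of $C_0$ — while $\sigma\in X_W'$ exactly when $W_{\mathrm{supp}(\sigma)}$ is finite.

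The crucial observation is that any matching witnessing a collapse $K_{W_T}\searrow X_{W_T}''$ is automatically confined to a single fiber of $\mathrm{supp}$. Indeed, for each $T\subseteq S$ with $W_T$ infinite, the hypothesis provides a proper, regular, acyclic matching $\N_T$ on $\F(K_{W_T})$ whose critical cells form $X_{W_T}''$ (for $T=S$ this uses that $W_S=W$ is itself infinite). If $\{\tau\lessdot\sigma\}$ is matched by $\N_T$, then $\tau$ and $\sigma$ are non-critical, hence not in $X_{W_T}''$, hence both have support exactly $T$; so $\N_T$ matches only cells lying in the fiber $\mathrm{supp}^{-1}(T)$. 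Conversely, every cell of $\mathrm{supp}^{-1}(T)$ is a non-critical cell of $\N_T$, hence matched, so $\N_T$ is a \emph{perfect} matching of that fiber. Now set $\M_T:=\N_T$ when $W_T$ is infinite and $\M_T:=\emptyset$ when $W_T$ is finite; since the fibers of $\mathrm{supp}$ are pairwise disjoint, $\M:=\bigcup_{T\subseteq S}\M_T$ is a well-defined matching on $\F(K_W)$, and by the Patchwork theorem (\Cref{thm:patchwork}, applied to $\mathrm{supp}$) it is acyclic. A cell $\sigma$ is critical for $\M$ if and only if it is critical for $\M_{\mathrm{supp}(\sigma)}$, i.e.\ if and only if $W_{\mathrm{supp}(\sigma)}$ is finite; hence the set of critical cells of $\M$ is precisely the cell poset of $X_W'$, which is a subcomplex of $K_W$.

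It remains to verify that $\M$ is proper and regular, after which \Cref{thm:dmt} yields $K_W\searrow X_W'$. Regularity is inherited from the $\N_T$: the only non-regular face of $K_W$, the $0$-cell $[\,]$, has empty support and is never matched. For properness, note that $\mathrm{supp}$ is non-increasing along every directed path in the oriented graph $H_{\M}$ (a matching edge keeps the support constant, a non-matching edge passes to a face and can only shrink it), so any such path eventually stays inside a single fiber $\mathrm{supp}^{-1}(T)$, where it becomes a directed path in $H_{\N_T}$; since $2^S$ is finite and each $\N_T$ is proper, only finitely many cells are reachable from any given one. The one delicate point — and the reason for routing everything through the support map rather than gluing the retractions $K_{W_T}\searrow X_{W_T}'$ of the various subcomplexes directly — is the compatibility of the local pieces where the $K_{W_T}$ overlap; this is resolved for free by the boxed observation above, since each $\N_T$ is forced to live inside its own fiber $\mathrm{supp}^{-1}(T)$ and distinct fibers are disjoint.
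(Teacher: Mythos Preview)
Your proof is correct and rests on the same core observation as the paper's: the collapse $K_{W_T}\searrow X_{W_T}''$ only touches cells of support exactly $T$, and these pieces are pairwise disjoint. The packaging, however, is different. The paper orders the infinite $T$'s from large to small and performs the retractions $K_{W_{T_j}}\searrow X_{W_{T_j}}''$ \emph{sequentially} inside the current complex, noting at each step that cells of support $T_j$ are not faces of anything still remaining outside $K_{W_{T_j}}$; this is a short topological argument relying on the homotopy extension property for CW pairs. You instead assemble all the local matchings into a \emph{single} global matching via the support map and the Patchwork theorem, then verify properness by induction on $|\mathrm{supp}(\sigma)|$. Your version is more explicit and yields a concrete acyclic matching on all of $K_W$, which can be convenient downstream; the paper's version is shorter and works for arbitrary deformation retractions.

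One caveat: you tacitly upgrade the hypothesis ``$K_{W_T}\searrow X_{W_T}''$'' to ``there exists a proper regular acyclic matching $\N_T$ on $\F(K_{W_T})$ with critical set $X_{W_T}''$''. In the paper, $\searrow$ is introduced only as notation for a deformation retraction (\Cref{thm:dmt}), so this is formally a stronger assumption. In every application here the collapses are indeed produced via discrete Morse theory, so the gap is harmless, but you should either state the stronger hypothesis explicitly or add a sentence justifying why, in this context, the retractions may be taken to arise from matchings.
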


\begin{proof}
    Let $T_1, T_2, \dotsc, T_k$ be the list of all subsets of $S$ such that $W_T$ is infinite.
    Order this list in such a way that $T_i \subseteq T_j$ implies $i > j$.
    In particular, $T_1 = S$.
    Note that any deformation retraction of $K_{W_T}$ onto $\Ksparse[W_T]$ must delete all the simplices of $K_{W_T}$ that are not contained in any $K_{W_{T'}}$ with $T' \subsetneq T$ while fixing all $K_{W_{T'}}$ with $T'\subsetneq T$.
    Therefore, it is enough to start from $K_W$ and perform the deformation retractions $K_{W_{T}} \searrow \Ksparse[W_T]$ one at a time for $T=T_1, \dotsc, T_k$.
    The final subcomplex is precisely $X_W'$.
\end{proof}

Hence, in order to deformation retract $K_W$ onto $X_W'$ it is enough to be able to deformation retract $K_W$ onto the larger subcomplex $\Ksparse$ (and to be able to do so for all infinite $W_T \subseteq W$).

\subsection{Fiber components and the subcomplexes $\Kshort$ and $\Klong$}\label{defeta}
As in the affine case \cite[Section 7]{paolini2021proof}, consider the poset map $\eta \colon \mathcal{F}(K_W) \to \N$ defined as
\begin{equation}    
    \eta([x_1|x_2|\dotsb|x_d]) =
    \begin{cases}
    d & \text{if $x_1 x_2 \dotsm x_d = w$} \\
    d+1 & \text{otherwise}.
    \end{cases}
    \label{eq:eta}
\end{equation}
The connected components of any fiber $\eta^{-1}(d)$ in the Hasse diagram of $\mathcal{F}(K_W)$ are called the \emph{$d$-fiber components}.
A $d$-fiber component has the form
\begin{center}
	\newcommand{\componentwidth}{1.35}
	\newcommand{\height}{-1.5}
	\begin{tikzpicture}
		\clip (-4*\componentwidth, \height-0.3) rectangle (4*\componentwidth, 0.3);  %
		
		\node (0) at (0,0) {$[x_1|\dotsb|x_d]$};
		\node (1) at (\componentwidth, \height) {$[x_2|\dotsb|x_d]$};
		\node (2) at (2*\componentwidth, 0) {$[x_2|\dotsb|x_{d+1}]$};
		\node (3) at (3*\componentwidth, \height) {$[x_3|\dotsb|x_{d+1}]$};
		\node (4) at (4*\componentwidth,0) {\phantom{$[\,]$}};
		\coordinate (3a) at ($(3)!0.5!(4)$) {};

		\node (-1) at (-\componentwidth, \height) {$[x_1|\dotsb|x_{d-1}]$};
		\node (-2) at (-2*\componentwidth, 0) {$[x_0|\dotsb|x_{d-1}]$};
		\node (-3) at (-3*\componentwidth, \height) {$[x_0|\dotsb|x_{d-2}]$};
		\node (-4) at (-4*\componentwidth, 0) {\phantom{$[\,]$}};
		\coordinate (-3a) at ($(-3)!0.5!(-4)$) {};
		
		\draw (0.south) -> (1.north);
		\draw (2.south) -> (1.north);
		\draw (2.south) -> (3.north);
		\draw[dashed] (3.north) -> (3a);
		
		\draw (0.south) -> (-1.north);
		\draw (-2.south) -> (-1.north);
		\draw (-2.south) -> (-3.north);
		\draw[dashed] (-3.north) -> (-3a);
	\end{tikzpicture}
\end{center}
where $x_i x_{i+1}\dotsm x_{i+d-1} = w$ for all $i$.
The bi-infinite sequence $(x_i)_{i \in \Z}$ satisfies $x_{i+d} = \varphi(x_i)$ for all $i$, where $\varphi$ is the conjugation by $w$ introduced in \Cref{def:phi}.
The collection of all $d$-fiber components for $d=1, \dotsc, n$ yields a partition of $\mathcal{F}(K_W)$.
Note that a fiber component can be finite (so its Hasse diagram is a closed loop) or infinite.

We now use the notion of fiber components to introduce two further subcomplexes of $K_W$ that interpolate between $K_W$ and its subcomplex $\Ksparse$ defined in \Cref{X2}.

\begin{definition}[Subcomplexes $\Kshort$ and $\Klong$]\label{2K} \strut
\begin{enumerate}[(a), leftmargin=0.65cm]
	\item Define $\Kshort$ as the union of all simplices $\sigma$ of $K_W$ such that there is a simplex of $\Ksparse$ both (weakly) to the left and (weakly) to the right of $\sigma$ in the fiber component $\CC$ containing $\sigma$.
    Roughly speaking, $\Kshort$ is constructed by taking the ``convex hull'' of the simplices of $\Ksparse$ in the Hasse diagram of each fiber component.
	\item Define $\Klong$ as the 
    union of all fiber components that intersect $\Ksparse$.
\end{enumerate}
\end{definition}

The nicknames ``short'' and ``long'' are motivated in \Cref{fig:subcomplexes}.
The following lemma shows that $\Kshort$ and $\Klong$ indeed are subcomplexes of $K_W$.

\begin{lemma}
    Let $\sigma \in \F(K_W)$ and let $\tau$ be a face of $\sigma$.
    Suppose that the fiber component of $\sigma$ contains a simplex of $\Ksparse$ weakly to the right (respectively, to the left) of $\sigma$.
    Then the fiber component of $\tau$ contains a simplex of $\Ksparse$ weakly to the right (respectively, to the left) of $\tau$.
\end{lemma}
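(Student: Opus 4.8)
The plan is to reduce at once to the case where $\tau$ is a codimension-one face of $\sigma$: a general face is reached from $\sigma$ by a chain of codimension-one faces, and the property in question propagates along such a chain. Write $\sigma=[x_1|\dotsb|x_d]$ and let $\CC$ be its fiber component. I will treat ``right''; ``left'' is symmetric. There are two regimes. If $x_1\dotsm x_d=w$, then $\partial_0\sigma=[x_2|\dotsb|x_d]$ and $\partial_d\sigma=[x_1|\dotsb|x_{d-1}]$ again have $\eta$-value $d$ and lie in $\CC$ — they are the two simplices of $\CC$ immediately to the right and to the left of $\sigma$ in its Hasse diagram — whereas each merging face $\partial_i\sigma$ with $0<i<d$ keeps the product $w$ but loses an entry, hence lands in a fiber component of strictly smaller index. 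If $x_1\dotsm x_d\neq w$, then every face of $\sigma$ lies in a fiber component of strictly smaller index.

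The easy regime is $\tau\in\{\partial_0\sigma,\partial_d\sigma\}$ with $x_1\dotsm x_d=w$, so that $\tau$ and $\sigma$ lie in the same fiber component $\CC$. Since $w$ is essential it fixes no vertex of $C_0$; hence, by the criterion recalled above, no simplex of $K_W$ whose entries multiply to $w$ belongs to $X_W''$. Consequently a simplex $\rho\in X_W''\cap\CC$ weakly to the right of $\sigma$ is in fact strictly to the right of $\sigma$, and since $\partial_0\sigma$ is the simplex of $\CC$ immediately to the right of $\sigma$ and $\partial_d\sigma$ the one immediately to the left, in either case $\rho$ is weakly to the right of $\tau$. (If $\tau=\sigma$ the assertion is vacuous, and if $\sigma\in X_W''$ then $\tau\in X_W''$ because $X_W''$ is a subcomplex, so it holds with witness $\tau$ itself.)

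The real work is the regime where $\tau$ lies in a fiber component $\CC'$ of strictly smaller index than $\CC$. Parametrize $\CC$ by a sequence $(x_i)$ with $x_i\dotsm x_{i+e-1}=w$ and $x_{i+e}=\varphi(x_i)$, where $e=\eta(\CC)$ (the sequence is bi-infinite if $\CC$ is infinite, finite periodic if $\CC$ is a loop); the face map realizing $\tau\lessdot\sigma$ then determines a corresponding sequence $(y_i)$ for $\CC'$, obtained from $(x_i)$ by merging two consecutive entries or by deleting the first or last entry of a block, and extended by $y_{i+e-1}=\varphi(y_i)$. The crux is the combinatorial claim that \emph{every simplex $\mu$ of $\CC$ with $\mu\neq\sigma$, lying weakly to the right of $\sigma$, and whose entries do not multiply to $w$, has a face $\mu'$ lying in $\CC'$ and weakly to the right of $\tau$.} Granting this, the proof finishes: a simplex $\rho\in X_W''\cap\CC$ weakly to the right of $\sigma$ cannot have its entries multiply to $w$, so either $\rho=\sigma$ — in which case $\tau\in X_W''$, being a face of $\sigma$ and $X_W''$ a subcomplex — or $\rho$ has a face $\rho'$ in $\CC'$ weakly to the right of $\tau$, and then $\rho'\in X_W''$ since $\rho'\leq\rho$.

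Proving the combinatorial claim is where the only genuine difficulty lies, and it is a matter of index bookkeeping rather than of any single clever idea. One argues by cases on the type of the face $\tau$ (a merge of an interior pair, a deletion of a first entry, a deletion of a last entry), computing in each case how the operation turns $(x_i)$ into $(y_i)$ and, crucially, where the ``merged'' entry of $\CC'$ (if present) and its $\varphi$-translates sit relative to each block $[x_a|\dotsb|x_b]$ defining a simplex $\mu$ of $\CC$ to the right of $\sigma$ whose entries do not multiply to $w$. When the block contains such a merged entry, $\mu'$ is the corresponding merging face of $\mu$; in the finitely many ``borderline'' positions where it does not, $\mu'$ is obtained instead by deleting an end entry of $\mu$, or one passes to the analogous face of the neighbouring such simplex of $\CC$ — in every case $\mu'$ is one of the simplices of $\CC'$ that sit weakly to the right of $\tau$. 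Finally one disposes of the degenerate low-index cases directly (for instance, if $\eta(\tau)=1$ then $\CC'=\{[\,],[w]\}$, which contains $[\,]\in X_W''$). I expect this verification, together with its parallel for the ``left'' statement, to be the main obstacle; everything else is formal.
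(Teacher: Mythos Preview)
Your approach is essentially the paper's: both arguments pass from the witness $\sigma'\in X_W''\cap\CC$ (necessarily a minimal, $(d-1)$-dimensional simplex of $\CC$) to one of its faces lying in the fiber component of $\tau$, weakly to the right of $\tau$. The difference is only in execution. You set up a case analysis on the type of face and speak of ``borderline'' positions and possible end-deletions; the paper bypasses all of this with a single formula. Writing $\sigma'=[x_{j+1}|\dotsb|x_{j+d-1}]$ for the appropriate shift $j$, and $\tau=\partial_i\sigma$, the required face is simply $\partial_k(\sigma')$ with $k\equiv i-j \pmod d$. This covers your ``interior merge'' case and your ``borderline'' cases uniformly (the latter correspond to $k\in\{0,d-1\}$, i.e.\ end-deletions of $\sigma'$), so the bookkeeping you anticipate as the main obstacle collapses to one line.

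One caution about your sketch: the hedge ``or one passes to the analogous face of the neighbouring such simplex of $\CC$'' would actually break the argument if it were needed. You rely on $\rho'\leq\rho$ to conclude $\rho'\in X_W''$ from $\rho\in X_W''$; a face of a neighbour of $\rho$ need not lie below $\rho$. Fortunately, as the paper's formula (and a direct check of your borderline cases) shows, a face of $\rho$ itself always works, so the hedge is unnecessary. Your explicit reduction to codimension-one faces is harmless but also not needed: the paper's formula applies to any $\partial_i$, and iterating handles the general face just as your reduction would.
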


\begin{proof}
	Let $d = \eta(\sigma)$, so that $\sigma$ is of the form $[x_1|\dotsb|x_{d}]$ or $[x_1|\dotsb|x_{d-1}]$ where $x_1\dotsm x_d = w$.
    Denote by $\CC$ the $d$-fiber component of $\sigma$.
    Let $\sigma'$ be a simplex of $\Ksparse$ weakly to the right of $\sigma$ in $\CC$. Then $\sigma'$ is a minimal element of $\CC$ and takes the form
	\[
	\sigma'=[\varphi^{-k}(x_{j+1})\vert \cdots\vert \varphi^{-k}(x_d)\vert\varphi^{-k+1}(x_1)\vert \cdots\vert \varphi^{-k+1}(x_{j-1})]
	\]
	for some $j=1,\ldots,d$ and some $k>0$.
    
	Recall from \Cref{sec:garside-structures} that the faces of an $m$-simplex $[y_1|\dotsb|y_m]$ are denoted by $\partial_i([y_1|\dotsb|y_m])$ for $i=0,\dotsc, m$.
	Let $\tau=\partial_i(\sigma)$.
    If $\sigma$ has dimension $d$ and $i=0$ or $i=d$, then $\tau\in \CC$ and $\sigma'$ is weakly to the right of $\tau$.
    In all other cases, the fiber component of $\tau$ contains the simplex $\partial_{k}(\sigma')$ weakly to the right of $\tau$, where $k = i-j \text{ mod } d$.
\end{proof}

Recall from \cite[Section 7]{paolini2021proof} that, given an infinite $d$-fiber component, for any connected subgraph of its Hasse diagram starting and ending with a $(d-1)$-simplex, there exists a proper acyclic matching with critical simplices given by this subgraph (see \Cref{fig:component-matching}).
This immediately gives us the following result.

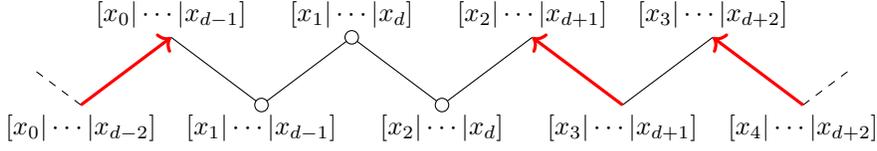
\begin{figure}
	\begin{center}
		\newcommand{\componentwidth}{1.2}
		\newcommand{\height}{-1.5}
		\begin{tikzpicture}
			\clip (-4*\componentwidth, \height-0.3) rectangle (6*\componentwidth, 0.3);  %
			
			\node (0) at (0,0) {$[x_1|\dotsb|x_d]$};
			\node (1) at (\componentwidth, \height) {$[x_2|\dotsb|x_d]$};
			\node (2) at (2*\componentwidth, 0) {$[x_2|\dotsb|x_{d+1}]$};
			\node (3) at (3*\componentwidth, \height) {$[x_3|\dotsb|x_{d+1}]$};
			\node (4) at (4*\componentwidth, 0) {$[x_3|\dotsb|x_{d+2}]$};
			\node (5) at (5*\componentwidth, \height) {$[x_4|\dotsb|x_{d+2}]$};
			\node (6) at (6*\componentwidth, 0) {\phantom{$[\,]$}};
			
			\coordinate (5a) at ($(5)!0.5!(6)$) {};

			\node (-1) at (-\componentwidth, \height) {$[x_1|\dotsb|x_{d-1}]$};
			\node (-2) at (-2*\componentwidth, 0) {$[x_0|\dotsb|x_{d-1}]$};
			\node (-3) at (-3*\componentwidth, \height) {$[x_0|\dotsb|x_{d-2}]$};
			\node (-4) at (-4*\componentwidth, 0) {\phantom{$[\,]$}};
			\coordinate (-3a) at ($(-3)!0.5!(-4)$) {};
			
			\draw (0.south) -> (1.north);
			\draw (2.south) -> (1.north);
			\draw (4.south) -> (3.north);
			\draw[dashed] (5.north) -> (5a);
			
			\draw (0.south) -> (-1.north);
			\draw (-2.south) -> (-1.north);
			\draw[dashed] (-3.north) -> (-3a);
			
			\begin{scope}[red, very thick]
				\draw[<-] (-2.south) -- (-3.north);
				\draw[<-] (2.south) -- (3.north);
				\draw[<-] (4.south) -- (5.north);
			\end{scope}
			
			\begin{scope}[circle]
				\node[fill=white, draw, inner sep=1.8] at (-1.north) {};
				\node[fill=white, draw, inner sep=1.8] at (1.north) {};
				\node[fill=white, draw, inner sep=1.8] at (0.south) {};
			\end{scope}
			
		\end{tikzpicture}
	\end{center}
	
	\caption{A matching on an infinite $d$-fiber component leaving the critical simplices $[x_1|\dotsb|x_{d-1}]$, $[x_1|\dotsb|x_{d}]$, and $[x_2|\dotsb|x_{d}]$ unmatched.
	More generally, one can construct a similar matching that leaves an arbitrary finite set of contiguous critical simplices unmatched, starting and ending with $(d-1)$-dimensional simplices.
	This matching is acyclic and proper.}
	\label{fig:component-matching}
\end{figure}

\begin{proposition}
	The complex $\Klong$ deformation retracts onto $\Kshort$.
	\label{prop:K''-K'}
\end{proposition}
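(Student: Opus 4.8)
The plan is to build a proper, regular, acyclic matching $\M$ on the poset of cells $\F(K_W'')$ whose critical cells are exactly those of $K_W'$, and then conclude by discrete Morse theory (\Cref{thm:dmt}). Note that $K_W'\subseteq K_W''$ and that $K_W'$ is a subcomplex of $K_W$ by the lemma above, so the subcomplex hypothesis in \Cref{thm:dmt} will be satisfied. Since $K_W''$ is, by definition, a union of fiber components, so is $\F(K_W'')$, and $\M$ will be defined component by component and then assembled with the Patchwork theorem (\Cref{thm:patchwork}) applied to the restriction to $\F(K_W'')$ of the poset map $\eta\colon\F(K_W)\to\N$ of \Cref{defeta}.

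Fix a fiber component $\CC\subseteq\F(K_W'')$, say a $d$-fiber component; its Hasse diagram is the zigzag that alternates between the $d$-simplices $[x_i|\dotsb|x_{i+d-1}]$ with product $w$ and the $(d-1)$-simplices $[x_{i+1}|\dotsb|x_{i+d-1}]$, and it is a bi-infinite path or a finite cycle. The key observation is that \emph{no $d$-simplex of $\CC$ lies in $X_W''$}: its product is the Coxeter element $w$, which is essential, hence does not fix any vertex of $C_0$ (the stabilizer of a vertex of $C_0$ is a proper standard parabolic), so it is not a cell of $X_W''$ by the characterization of $X_W''$ recalled above. Therefore every cell of $\CC\cap X_W''$ is a $(d-1)$-simplex, and hence the ``convex hull'' $\CC\cap K_W'$ of $\CC\cap X_W''$ in the zigzag $\CC$ (see \Cref{2K}) is a connected subgraph whose extremal cells, if any, are $(d-1)$-simplices; a finite cyclic $\CC$ meeting $X_W''$ is contained in $K_W'$ entirely, and in particular for the degenerate $1$-fiber component $\{[\,],[w]\}$ the two bottom-neighbours of $[w]$ both equal $[\,]\in X_W''$, so $\CC\cap K_W'=\CC$ and the contributed matching is empty.

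With this in hand, \cite[Section 7]{paolini2021proof} (cf.\ \Cref{fig:component-matching}) provides, on each $d$-fiber component, a proper acyclic matching with any prescribed connected subgraph that starts and ends with a $(d-1)$-simplex as its set of critical cells; applying this with the subgraph $\CC\cap K_W'$ yields a matching $\M_\CC$ whose critical cells in $\CC$ are exactly the cells of $K_W'$ lying in $\CC$. Each pair of $\M_\CC$ consists of a $(d-1)$-simplex together with a $d$-dimensional coface, hence is a regular face-coface pair (the only irregular incidence in $K_W$ is $[\,]\lessdot[w]$, which lives in the $1$-fiber component, whose matching is empty), so $\M_\CC$ is regular. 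Setting $\M=\bigcup_\CC\M_\CC$ over all fiber components of $K_W''$: each $\M_\CC$ lies in a single fiber of $\eta$, and within a fiber $\eta^{-1}(d)$ distinct $d$-fiber components are distinct connected components of the Hasse diagram of the subposet $\eta^{-1}(d)$, so the union of the $\M_\CC$ with $\eta$-value $d$ is an acyclic matching of $\eta^{-1}(d)$; \Cref{thm:patchwork} then makes $\M$ an acyclic matching of $\F(K_W'')$. Properness follows from properness of each $\M_\CC$ (these partition $\F(K_W'')$) by a routine downward induction on dimension, as in \cite[Section 7]{paolini2021proof}; regularity was verified componentwise; and by construction a cell of $K_W''$ is critical for $\M$ precisely when it lies in $K_W'$. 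Hence \Cref{thm:dmt} gives $K_W''\searrow K_W'$.

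The step that genuinely requires care is the description of $\CC\cap K_W'$ inside each fiber component: everything rests on the fact that the ``top'' cells of a fiber component (the ones with product $w$) never belong to $X_W''$, which is precisely what makes $\CC\cap K_W'$ a subpath bounded by $(d-1)$-simplices and thus of the exact shape to which the fiber-component matching of \cite[Section 7]{paolini2021proof} applies. The degenerate cyclic components, together with the isolated irregular incidence $[\,]\lessdot[w]$, must also be checked, but they turn out to be harmless.
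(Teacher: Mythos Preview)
Your proof is correct and follows essentially the same route as the paper: construct componentwise matchings on each $d$-fiber component of $K_W''$ with critical set $\CC\cap K_W'$, glue them with the Patchwork theorem via $\eta$, and conclude by \Cref{thm:dmt}. You even spell out two points the paper leaves implicit, namely why $\CC\cap K_W'$ is a subpath beginning and ending at $(d-1)$-simplices (because $d$-cells of $\CC$ have product $w$, which is essential and thus outside every $K_{W_T}$ with $T\subsetneq S$) and why the matching is regular.
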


\begin{proof}
	Every $d$-fiber component $\CC$ of $\Klong$ intersects $\Ksparse$ (and thus $\Kshort$), so it admits a proper acyclic matching with $\CC \cap \F(\Kshort)$ as the set of critical simplices (if $\CC$ is finite, then $\CC\subseteq \F(\Kshort)$, so the empty matching works).
	Let $\M$ be the union of all these matchings.
	By the Patchwork theorem (\Cref{thm:patchwork}), $\M$ is acyclic.
    Let $H_\M$ be the acyclic graph defined in \Cref{sec:dmt}, where $H$ is the Hasse diagram of $\F(\Klong)$.
    If $\sigma$ is any simplex in a $d$-fiber component, then every oriented path starting from $\sigma$ in the graph $H_\M$ can change the fiber component at most $d-1$ times (each time the fiber component changes, the value of $\eta$ decreases by $1$; the initial value is $d$ and the value on any simplex is $\geq 1$).
	By induction on $d$, for any $\sigma$ in a $d$-fiber component, there are only a finite number of simplices reachable from $\sigma$.
	Therefore, $\M$ is proper.
	By \Cref{thm:dmt}, we conclude that $\Klong \searrow \Kshort$.
\end{proof}

\begin{figure}
	\begin{center}
		\newcommand{\componentwidth}{0.67}
		\newcommand{\height}{-1.2}
		\newcommand{\first}{-3}
		\newcommand{\second}{-2}
		\newcommand{\last}{5}
		\begin{tikzpicture}
			\begin{scope}[shift={(0, 2.5)}]
				\clip (2*\first*\componentwidth, \height-0.3) rectangle (2*\last*\componentwidth+2*\componentwidth, 0.3);
				
				\foreach \i in {\first,...,\last} {
					\node (\i-bottom) at (2*\i*\componentwidth+\componentwidth, \height) {};
				}
				
				\foreach[evaluate={\j=int(\i-1)}] \i in {\second,...,\last} {
					\node (\i-top) at (2*\i*\componentwidth, 0) {};
					\draw (\i-top.south) -> (\i-bottom.north);
					\draw (\i-top.south) -> (\j-bottom.north);
				}
				
				\node (right) at (2*\last*\componentwidth+2*\componentwidth, 0) {\phantom{$[\,]$}};
				
				\path (\last-bottom) -- (right) node[midway] (mid-right) {};
				
				\node (left) at (2*\first*\componentwidth, 0) {\phantom{$[\,]$}};
				\path (\first-bottom) -- (left) node[midway] (mid-left) {};
				
				\draw[dashed] (\first-bottom.north) -> (mid-left);
				\draw[dashed] (\last-bottom.north) -> (mid-right);
			\end{scope}
			
			\begin{scope}
				\clip (2*\first*\componentwidth, \height-0.3) rectangle (2*\last*\componentwidth+2*\componentwidth, 0.3);
				
				\foreach \i in {\first,...,\last} {
					\node (\i-bottom) at (2*\i*\componentwidth+\componentwidth, \height) {};
				}
			
				\foreach[evaluate={\j=int(\i-1)}] \i in {\second,...,\last} {
					\node (\i-top) at (2*\i*\componentwidth, 0) {};
					\draw (\i-top.south) -> (\i-bottom.north);
					\draw (\i-top.south) -> (\j-bottom.north);
				}
				
				\node (right) at (2*\last*\componentwidth+2*\componentwidth, 0) {\phantom{$[\,]$}};
				
				\path (\last-bottom) -- (right) node[midway] (mid-right) {};
				
				\node (left) at (2*\first*\componentwidth, 0) {\phantom{$[\,]$}};
				\path (\first-bottom) -- (left) node[midway] (mid-left) {};
				
				\draw[dashed] (\first-bottom.north) -> (mid-left);
				\draw[dashed] (\last-bottom.north) -> (mid-right);
	
				\begin{scope}[circle]
					\node[fill=black, inner sep=1.8] at (0-bottom.north) {};
					\node[fill=black, inner sep=1.8] at (2-bottom.north) {};
					\node[fill=lightgray, draw, inner sep=1.8] at (1-bottom.north) {};
					\node[fill=lightgray, draw, inner sep=1.8] at (4-bottom.north) {};
					\node[fill=lightgray, draw, inner sep=1.8] at (-2-bottom.north) {};
				\end{scope}
			\end{scope}
		
			\begin{scope}[dashed, thick]
			    \begin{scope}[teal]
        			\draw (-3.5*\componentwidth, \height-0.4) rectangle (9.5*\componentwidth, 0.3);
        			
        			\node at (8*\componentwidth, \height-0.7) {$\Kshort$};
			    \end{scope}
    		
    		    \begin{scope}[violet]
        			\draw (-5.5*\componentwidth, \height-1.2) rectangle (11.5*\componentwidth, 0.7);
    
        			\node at (9.7*\componentwidth, \height-1.5) {$\Klong$};
    		    \end{scope}
    
                \begin{scope}[blue]
    			    \draw (-5.5*\componentwidth-0.3, \height-2) rectangle (11.5*\componentwidth+0.3, -\height+1.6);
        			\node at (11.4*\componentwidth, \height-2.3) {$K_W$};
                \end{scope}
    			
			\end{scope}

			\node[fill=white, below=-0.1 of 0-bottom] {$[a]$};
			\node[fill=white, below=-0.1 of 1-bottom] {$[bc]$};
			\node[fill=white, above=-0.2 of 1-top] {$[a|bc]$};
			
			\node[circle, fill=black, inner sep=1.8] (XW1) at (-5*\componentwidth, \height-2.6) {};
			\node[right=0 of XW1] {$\in X_W'$};

			\node[circle, fill=lightgray, draw, inner sep=1.8] (XW2) at (-2*\componentwidth, \height-2.6) {};
			\node[right=0 of XW2] {$\in \Ksparse \setminus X_W'$};
		\end{tikzpicture}
	\end{center}
	
	\caption{Example of how the subcomplexes $X_W' \subseteq \Ksparse \subseteq \Kshort \subseteq \Klong \subseteq K_W$ can intersect two different fiber components.
	The bottom fiber component intersects $X_W'$ (and thus also $\Ksparse$, $\Kshort$, and $\Klong$), whereas the top one does not.
	This figure visually motivates the names ``sparse'', ``short'', and ``long'' given to the three subcomplexes.
	The depicted situation can be realized e.g.\ if $W$ is the $(2, 3, \infty)$ triangle group (\Cref{fig:coxeter-axis}, right), the bottom component contains the simplices $\dotsc, [a], [a|bc], [bc], \dotsc$, and on top is any component that does not intersect $\Ksparse$ (see \Cref{rmk:components-intersecting-XW''}).
	}
	\label{fig:subcomplexes}
\end{figure}

The following is the chain of subcomplexes we have introduced:
\[
    X_W' \subseteq \Ksparse \subseteq \Kshort \subseteq \Klong \subseteq K_W.
\]
The definition of $\Kshort$ extends the definition of the \emph{canonical nice subcomplex} introduced in \cite[Section 7]{paolini2021proof} for the affine case, where the chain above simplifies to $X_W' = \Ksparse \subseteq \Kshort \subseteq \Klong = K_W$.
However, differently from the affine case, $\Ksparse$ (and thus $\Kshort$) does not necessarily intersect every fiber component of $K_W$ in general (see \Cref{sec:dmt-rank-three} below).
\Cref{fig:subcomplexes} exemplifies the definition of all subcomplexes; see also \cite[Figure 8]{paolini2021proof} for an affine example.

The reason for introducing the subcomplexes $\Kshort$ and $\Klong$ is twofold.
First, \Cref{prop:K''-K'} yields a deformation retraction $\Klong \searrow \Kshort$.
Second, the proof that $\Kshort \searrow \Ksparse$ in the affine case (\cite[Section 8]{paolini2021proof}) applies more generally, provided that one has an axial ordering of $R_0$ that induces an EL-labeling of $[1,w]$.

\section{Discrete Morse theory for the rank-three case}
\label{sec:dmt-rank-three}

We apply discrete Morse theory to the complexes introduced in the previous section in the case of a rank-three Coxeter system, which we assume to be irreducible and hyperbolic. The end goal of this section is to prove the following main theorem.

\begin{theorem}\label{thm:K-X'}
    The interval complex $K_W$ deformation retracts onto $X_W'$.
    In particular, $K_W$ is homotopy equivalent to the orbit configuration space $Y_W$.
    \label{thm:deformation-retraction}
\end{theorem}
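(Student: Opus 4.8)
The plan is to prove the statement along the route summarized in \Cref{treasuremap}, realizing the deformation retraction as the composite of three collapses
\[
    K_W \;\searrow\; K_W'' \;\searrow\; K_W' \;\searrow\; X_W'',
\]
and then applying \Cref{prop:inductive-collapse} to pass from $X_W''$ down to $X_W'$. Beyond the collapse $K_W\searrow X_W''$ itself (which is the composite above), \Cref{prop:inductive-collapse} also requires $K_{W_T}\searrow X_{W_T}''$ for every infinite \emph{proper} standard parabolic subgroup $W_T\subsetneq W$; since $W$ is rank-three hyperbolic, such a $W_T$ has rank at most two, and the only infinite possibility is an infinite dihedral group (type $\tilde A_1$), for which $X''_{W_T}=X'_{W_T}$ and the collapse is already available from the affine case \cite{paolini2021proof}. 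Granting the three collapses, \Cref{prop:inductive-collapse} then yields $K_W\searrow X_W'$, and the homotopy equivalence $K_W\simeq Y_W$ follows from \Cref{thm:XW'}. Throughout we fix an axial chamber $C_0$ and label its wall reflections $a,b,c$ so that $w=abc$ (\Cref{lemma:axial-factorization}).

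The collapse $K_W\searrow K_W''$ is the genuinely new step: in the affine case $K_W''=K_W$, so there it is vacuous. Here $K_W\setminus K_W''$ is the union of the fiber components of $K_W$ that miss $X_W''$ — concretely, the components all of whose simplices have a product fixing no vertex of $C_0$, a typical example being a $2$-fiber component whose reflections are all horizontal (so that the rank-two elements between them are translations, by \Cref{lemma:reflections}). The first observation is that every fiber component of $K_W$ is infinite: a finite one would force some nonidentity element of $[1,w]$ to commute with a power of $w$, which the geometry of the Coxeter axis forbids — for instance a reflection commuting with a power of $w$ would have to be the reflection in $\ell$ itself, which is not in $W$ by \Cref{lemma:reflections-and-axis}, and rotations or translations below $w$ are excluded similarly. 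I would then construct, on each fiber component $\CC$ disjoint from $X_W''$, a Morse matching in which \emph{every} simplex of $\CC$ is non-critical, and take the union of these over all such $\CC$; since $\eta$ is constant on each fiber component, the Patchwork theorem (\Cref{thm:patchwork}) makes this union acyclic. Because $[\,]\in X_W''$ is never matched the matching is regular, its critical simplices form exactly the subcomplex $K_W''$, and \Cref{thm:dmt} yields $K_W\searrow K_W''$ once properness is checked.

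The collapse $K_W''\searrow K_W'$ is precisely \Cref{prop:K''-K'} and needs nothing further. The collapse $K_W'\searrow X_W''$ is where the order-theory of $[1,w]$ enters essentially: the affine argument of \cite[Section~8]{paolini2021proof} is written using only (i)~a total order on $R_0$ inducing an EL-labeling of $[1,w]$, which we have by \Cref{thm:shellability}, and (ii)~the way conjugation by $w$ interacts with that order, which we have by \Cref{lemma:phi-preserves-order,lemma:phi-and-order,lemma:conjugate-smallest-reflections,lemma:smallest-reflections}. Following that template, one builds the matching fiber-component by fiber-component (cf.\ \Cref{fig:component-matching}), invokes the Patchwork theorem, and checks acyclicity via the EL-labeling and properness as before, obtaining $K_W'\searrow X_W''$ with critical simplices exactly those of $X_W''$.

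I expect the main obstacle to be the new collapse $K_W\searrow K_W''$, and specifically the requirement that the Morse matching on the infinite fiber components disjoint from $X_W''$ be simultaneously acyclic and \emph{proper}: a naive left-to-right perfect matching of such a bi-infinite component produces directed paths that escape to infinity within the component, so the matching has to be arranged so that every directed path is driven, after finitely many steps, down to a fiber component of strictly smaller $\eta$. Establishing that the resulting global matching on $\F(K_W)$ then has no directed cycles spanning several fiber components is the other delicate part, and this is where the geometric input of \Cref{sec:coxeter-elements} — the classification of the elements below $w$ (\Cref{lemma:reflections}), the five-lines configuration (\Cref{lemma:five-lines}), and the monotonicity of $\varphi$ with respect to $\prec$ — should do the work. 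The remaining collapses are comparatively routine: one is already established (\Cref{prop:K''-K'}), and the other transcribes the affine construction once the EL-labeling is in place.
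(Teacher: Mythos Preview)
Your overall architecture matches the paper's: the three collapses $K_W\searrow K_W''\searrow K_W'\searrow X_W''$, then \Cref{prop:inductive-collapse} (with the $\tilde A_1$ parabolics handled by the affine case), then \Cref{thm:XW'}. Your treatment of $K_W''\searrow K_W'$ and of $K_W'\searrow X_W''$ is correct and agrees with the paper's. The genuine gap is in $K_W\searrow K_W''$.

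You propose a perfect matching on each fiber component $\CC$ disjoint from $X_W''$, with acyclicity via Patchwork along $\eta$. But each such $\CC$ is a bi-infinite path in the Hasse diagram, and on a bi-infinite path the only perfect matchings are the ``all-left'' and ``all-right'' ones; both admit an infinite alternating path that never leaves $\CC$. So no within-component perfect matching can be proper. You acknowledge this difficulty, but it cannot be fixed inside the framework you set up: the very hypothesis that lets Patchwork along $\eta$ apply (matched pairs lie in a single $\eta$-fiber, hence in a single component) is what forces the improper infinite paths. Your later remark about ``directed cycles spanning several fiber components'' is then inconsistent with that hypothesis.

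The paper resolves this by \emph{not} matching entirely within components. On each type-(v) component one leaves a single critical $2$-simplex $[r_1|r_2]$ (with $r_1\prec r_2$ and $r_1r_2$ a distinguished ``special'' translation), on each non-exceptional type-(iii) component one leaves the single critical $1$-simplex $[t]$ with $t$ special, and then one adds \emph{cross-fiber} matching edges $[t]\to[r_1|r_2]$ pairing these leftovers. Patchwork along $\eta$ is now unavailable; instead one introduces a weight $\omega(\sigma)=d(\Fix(r),\ell)$, the hyperbolic distance from the Coxeter axis to the fixed line of the left complement $r$ of the special translation associated with $\sigma$'s component. The five-lines lemma (\Cref{lemma:five-lines}) forces $\omega$ to drop strictly whenever a directed path crosses a cross-fiber edge and then descends to $[r_1]$ or $[r_2]$; this both kills cycles and gives properness by induction on $\omega$ (which takes values in a discrete set bounded away from $0$). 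This cross-fiber construction and the weight function $\omega$ are the missing idea in your proposal.
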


The strategy of the proof is outlined in \Cref{treasuremap}. The only missing ingredients at this point are the deformation retractions $K_W\searrow \Klong$ and $\Kshort\searrow \Ksparse$, which we construct in \Cref{sec:matching,sec:acyclicity,sec:matching-N} below.

We will proceed in steps.  First, we will classify the fiber components of the complex $K_W$. Based on this classification, we will construct a discrete Morse matching $\M$ on $K_W$ that will prove the collapsing of $K_W$ onto $\Klong$. Then, we will explain how the arguments of \cite[Section 8]{paolini2021proof} carry over to our context, allowing us to prove the collapse of $\Kshort$ onto $\Ksparse$. Finally, we will summarize all the steps and prove \Cref{thm:K-X'}.

\subsection{Classification of fiber components}
We provide a geometric classification of the fiber components of $K_W$ for a rank-three hyperbolic Coxeter group.

\begin{lemma}
    Let $\CC$ be a $d$-fiber component of $K_W$, encoded by the bi-infinite sequence $(x_i)_{i \in \Z}$.
    Up to translation of the indices, exactly one of the following cases occurs.
    \begin{enumerate}[(i)]
        \item $\CC = \{ [\,], [w]\}$, $d=1$, and $x_i = w$ for all $i$.
        \item $d=2$, $x_{2i}$ is a vertical reflection and $x_{2i+1}$ is a rotation (possibly around a point at infinity).
        \item $d=2$, $x_{2i}$ is a horizontal reflection and $x_{2i+1}$ is a translation.
        \item $d=3$, all the $x_i$ are reflections, and at least one of the subsequences $(x_{3i})_{i \in \Z}$, $(x_{3i+1})_{i \in \Z}$, and $(x_{3i+2})_{i \in \Z}$ consists of vertical reflections.
        \item $d=3$ and all the $x_i$ are horizontal reflections.
    \end{enumerate}
    \label{lemma:fiber-components}
\end{lemma}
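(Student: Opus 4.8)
The plan is a short, induction-free case analysis on the integer $d$, built on the description of $d$-fiber components recalled in \Cref{defeta} and on \Cref{lemma:reflections}. I would begin by isolating two elementary facts. First, a simplex $[x_i|\dotsb|x_{i+d-1}]$ with $\eta$-value $d$ satisfies $l_R(x_i)+\dotsb+l_R(x_{i+d-1})=l_R(w)=3$, and each entry $x_j$ is different from $1$ (hence $l_R(x_j)\ge 1$), so $d\le 3$. Second, $w$ is a glide reflection with axis $\ell$, so it stabilizes $\ell$ together with its two ideal endpoints; consequently, for any reflection $r$ we have $\Fix(\varphi(r))=w^{-1}(\Fix(r))$, and this line meets $\ell$ (at a finite or an ideal point) if and only if $\Fix(r)$ does. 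Thus conjugation by $w$ preserves the partition of the reflections of $[1,w]$ into vertical and horizontal ones. Since $x_{i+d}=\varphi(x_i)$, the entries of any fixed residue class of indices modulo $d$ are $\varphi$-translates of one another; in particular the reflections occurring inside a single residue class are either all vertical or all horizontal.

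With these in hand I would treat the three values of $d$ separately. If $d=1$, the defining relation $x_i x_{i+1}\dotsm x_{i+d-1}=w$ reads $x_i=w$ for all $i$, so $\CC=\{[\,],[w]\}$, which is case (i). If $d=2$, then $l_R(x_i)+l_R(x_{i+1})=3$ with both summands at least $1$ forces $\{l_R(x_i),l_R(x_{i+1})\}=\{1,2\}$, so the reflection length alternates; after translating the indices I may assume that the $x_{2i}$ are the reflections and the $x_{2i+1}$ the elements of reflection length $2$. From $w=x_{2i}x_{2i+1}$ one gets $x_{2i+1}=x_{2i}\,w$, and \Cref{lemma:reflections} shows that $x_{2i+1}$ is a rotation (possibly about an ideal point) when $x_{2i}$ is vertical and a translation when $x_{2i}$ is horizontal; by the second preliminary fact the $x_{2i}$ are all of the same kind, yielding case (ii) or case (iii). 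Finally, if $d=3$ then every $l_R(x_i)$ equals $1$, so all the $x_i$ are reflections; either all of them are horizontal, which is case (v), or some $x_i$ is vertical, in which case the second preliminary fact places the whole residue class of $i$ modulo $3$ inside the vertical reflections, which is case (iv).

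It then remains to observe that the five cases are pairwise incompatible: they are already distinguished by the value of $d$ (equal to $1$, $2$, $2$, $3$, $3$), case (ii) is separated from case (iii) by the dichotomy vertical/rotation versus horizontal/translation, and case (iv) is separated from case (v) by the presence of a vertical entry. I do not expect a genuine obstacle, since each step reduces to the reflection-length bound for $w$ and to \Cref{lemma:reflections}, together with the remark that conjugation by $w$ fixes the Coxeter axis and its endpoints. The only point needing a little care is the claim that $\varphi$ respects the vertical/horizontal dichotomy also at infinity: here one invokes \Cref{rmk:reflections-distance}, which guarantees that the fixed line of a horizontal reflection stays at positive distance from $\ell$ and hence cannot be carried onto a line meeting $\ell$ by an isometry preserving $\ell$.
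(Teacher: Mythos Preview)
Your proof is correct and follows essentially the same approach as the paper, which is a terse case analysis on $d$ invoking \Cref{lemma:reflections} for the $d=2$ case and declaring $d=1,3$ obvious. You have simply unpacked the ``obvious'' parts, including the observation that $\varphi$ preserves the vertical/horizontal dichotomy (which the paper uses implicitly); your discussion of the ideal points via \Cref{rmk:reflections-distance} is harmless but unnecessary, since the vertical/horizontal distinction is defined by intersection in $\H^2$ and $w$ is a bijection of $\H^2$ preserving $\ell$.
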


\begin{proof}
    The cases $d=1$ and $d=3$ are obvious. In the case $d=2$, we only need to use the fact that the (left or right) complement of a vertical reflection is a rotation, whereas the complement of a horizontal reflection is a translation (\Cref{lemma:reflections}).
\end{proof}

The classification of \Cref{lemma:fiber-components} has some similarities with the (rank-three) affine case (\cite[Section 7]{paolini2021proof}).
There, components of type (iii) are finite, type (v) does not occur, and overall there are only a finite number of components.
Here on the other hand, all components are infinite except for the single component of type (i). In addition, there are infinitely many components of types (iii) and (v).

\begin{remark}
Even though a reflection can never occur twice in the same minimal factorization of $w$, the sequence $(x_i)_{i \in \Z}$ defining a fiber component can have repetitions.
For example, if $t$ is the translation of \Cref{fig:five-lines} and $t=r_1r_2$ is its $\prec$-increasing factorization, then $r_2 = \varphi^3(r_1)$. Therefore, the fiber component of type (v) containing $[r_1|r_2]$ has a repetition of both $r_1$ and $r_2$.
\end{remark}

\begin{remark}
    The fiber components that intersect $\Ksparse$ (and thus constitute the subcomplex $\Klong$) are all those of types (i), (ii), (iv), and some of type (iii). Indeed, for every minimal element $[r_i\vert r_{i+1}]$ of a type (v) fiber component, the product $r_ir_{i+1}$ is a translation by \Cref{lem:reflections-vh} and fixes no vertices of the chamber $C_0$.
    \label{rmk:components-intersecting-XW''}
\end{remark}
If a fiber component of type (iii) intersects $\Ksparse$, we say that it is \emph{exceptional}.
Exceptional components are characterized in the following lemma.

\begin{lemma}
    Let $\CC$ be a fiber component of type (iii). The following are equivalent.
    \begin{enumerate}[(I)]
        \item $\CC$ intersects $\Ksparse$ (i.e., $\CC$ is exceptional).
        \item For every $[r] \in \CC$ where $r$ is a (horizontal) reflection, $r$ fixes an axial vertex.
        \item For every $[t] \in \CC$ where $t$ is a translation, there is at least one vertical reflection below $t$ in $[1,w]$.
    \end{enumerate}
    \label{lemma:exceptional-components}
\end{lemma}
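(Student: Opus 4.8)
The plan is to pin down how the three conditions record, respectively, the subcomplex $X_W''$, the rotations of $[1,w]$, and the vertical reflections, using only the structural results already available (\Cref{lemma:reflections}, \Cref{lemma:rotations}, \Cref{lemma:reflections-below-translation}, \Cref{lemma:axial-vertices-orbits}, \Cref{thm:coxeter-elements-below-w}). First I would set up notation: by \Cref{lemma:fiber-components}(iii) write $\CC$ as a bi-infinite sequence $\dots, r_{-1}, t_{-1}, r_0, t_0, r_1, t_1, \dots$ with each $r_i$ a horizontal reflection and each $t_i$ a translation, satisfying $r_it_i = w = t_ir_{i+1}$ and $r_{i+1}=\varphi(r_i)$, $t_{i+1}=\varphi(t_i)$. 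Since $\varphi$ is conjugation by $w$ and $w$ preserves $\ell$ (hence permutes axial chambers and axial vertices), the properties ``$r_i$ fixes an axial vertex'' and ``some vertical reflection lies below $t_i$ in $[1,w]$'' are independent of $i$; so in (2) and (3) it is enough to check one index. I would also record at the outset that a $2$-simplex $[r_i|t_i]$ of $\CC$ has product $w$ (a glide reflection, fixing no vertex of $C_0$) and that each $t_i$ fixes no vertex of $C_0$; combining this with the description of $X_W''$ (a simplex $[x_1|\dotsb|x_d]$ lies in $X_W''$ exactly when $x_1\cdots x_d$ fixes a vertex of $C_0$) gives that (1) is equivalent to: \emph{some $r_i$ fixes a vertex of $C_0$}. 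In particular (1) $\Rightarrow$ (2) is immediate, since a vertex of the axial chamber $C_0$ is an axial vertex.

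For (2) $\Rightarrow$ (3), I would fix an adjacent pair $r:=r_0$, $t:=t_0$ with $w = rt$. The hypothesis gives an axial vertex $p$ fixed by $r$, so by \Cref{lemma:rotations} there is a rotation $\rho\in[1,w]$ around $p$, which is a Coxeter element of the dihedral stabilizer $W_p$ of $p$. Since $r\in W_p$ and every reflection of a dihedral group lies below its Coxeter element, $r\le\rho\le w$. Writing $\rho = rs'$ and $w = \rho r''$ with $r''$ the right complement of $\rho$, one computes $t = r^{-1}w = s'r''$, so $r''\le t$; and because $wr'' = \rho$ is a rotation, \Cref{lemma:reflections} prevents $r''$ from being horizontal, so $r''$ is a vertical reflection below $t$. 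This is (3), hence it holds for every $t_i$.

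For (3) $\Rightarrow$ (1), with $r:=r_0$, $t:=t_0$, $w = rt$ again, I would take a vertical reflection $s\le t$ and use \Cref{lemma:reflections-below-translation} to write $t = s' s$ with $s', s\le t$. Then $w = r s' s$ and $\rho := rs' = ws^{-1}=ws$; since $s$ is vertical, \Cref{lemma:reflections} shows $\rho$ is a rotation around an axial vertex $p$, and $r\le\rho$ (by reflection-length count), so $r$ fixes $p$ by \Cref{lemma:rotations} (or \Cref{thm:coxeter-elements-below-w}). Finally \Cref{lemma:axial-vertices-orbits} writes $p = w^k(v)$ for some vertex $v$ of $C_0$ and $k\in\Z$, whence $\varphi^k(r)$ is a horizontal reflection of $\CC$ fixing $v$, so $[\varphi^k(r)]\in \CC\cap X_W''$ and (1) holds. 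The three implications (1)$\Rightarrow$(2)$\Rightarrow$(3)$\Rightarrow$(1) give the equivalence.

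I do not expect a genuine obstacle: all the hyperbolic geometry is already encapsulated in the cited lemmas, and what remains is bookkeeping with complements in $[1,w]$ together with the $\varphi$-equivariance of ``vertical'', ``horizontal'' and ``fixing an axial vertex''. The one slightly delicate point is the claim used in (2) $\Rightarrow$ (3) that every reflection of the (possibly infinite) dihedral group $W_p$ lies below a chosen Coxeter element of $W_p$ --- classical for finite dihedral noncrossing-partition lattices, and in the remaining case, where $p$ is an ideal axial vertex and $W_p$ is infinite dihedral, reduced to a short reflection-length computation; I would also need to keep track that the factorizations $\rho = rs'$, $w = \rho r''$, $t = s' s$ are all reduced, which again follows from length counts.
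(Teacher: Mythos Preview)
Your argument is correct and matches the paper's approach: both reduce (1) and (2) to a single index via $\varphi$-equivariance and \Cref{lemma:axial-vertices-orbits}, and both pass between (2) and (3) by the same complement calculus (if $r\le\rho$ with $\rho\in[1,w]$ a rotation, then the right complement of $\rho$ is a vertical reflection below the right complement $t$ of $r$, and conversely), invoking \Cref{lemma:reflections} and \Cref{lemma:rotations}. The only difference is organizational---you close the cycle (1)$\Rightarrow$(2)$\Rightarrow$(3)$\Rightarrow$(1) whereas the paper proves (1)$\Leftrightarrow$(2) and (2)$\Leftrightarrow$(3)---and you make explicit the dihedral fact (every reflection lies below a Coxeter element) that the paper absorbs into its appeal to \Cref{lemma:rotations}.
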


\begin{proof}
    If (I) holds, then there exists a simplex $[r] \in \CC$ such that $r$ is a horizontal reflection that fixes a vertex of $C_0$.
    Then $\varphi^k(r)$ fixes an axial vertex for every $k \in \Z$, and thus (II) holds.
    Conversely, if (II) holds, then by \Cref{lemma:axial-vertices-orbits} there is at least one horizontal reflection $r$ that fixes a vertex of $C_0$ with $[r] \in \CC$, so (I) holds.
    
    If (II) holds, then by \Cref{lemma:rotations}, for any horizontal reflection $r$ with $[r] \in \CC$ we have that $r \leq u$ for some rotation $u \in [1, w]$ (around an axial vertex fixed by $r$).
    Therefore $t \geq r'$ where $t$ is the right complement of $r$ (a translation) and $r'$ is the right complement of $u$ (a vertical reflection). Thus (III) holds.
    Finally, if (III) holds and $t$ is a translation with $[t] \in \CC$, there is a vertical reflection $r' \leq t$. Then $u \geq r$, where $u$ is the right complement of $r'$ (a rotation around an axial vertex) and $r$ is the right complement of $t$ (a horizontal reflection).
    In particular, $r$ fixes an axial vertex. Therefore (II) holds.
\end{proof}

\subsection{Componentwise construction of the matching $\M$}
\label{sec:matching}

We now describe a perfect matching $\M$ (i.e., a matching with no critical simplices) on $K_W \setminus \Klong$.
Recall that $K_W \setminus \Klong$ consists of the union of all fiber components of type (v) and all non-exceptional components of type (iii).

Fix any point $p_0 \in \ell$ and consider the semi-open segment $I \subseteq \ell$ between $p_0$ (included) and $w(p_0)$ (excluded).
Then $I$ is a fundamental domain for the action of $\Z = \< w \>$ on $\ell$. Recall from \Cref{defxi} that the function $\xi$ assigns to every reflection $r$ the point $\xi(r)$ on the Coxeter axis that is closest to $\Fix(r)$.

\begin{definition}
    A translation $t \in [1,w]$ is \emph{special} if its left complement $r$ satisfies $\xi(r) \in I$.
\end{definition}

We could alternatively define a translation to be special if its \emph{right} complement $r$ satisfies $\xi(r) \in I$, or if the translation axis of $t$ intersects $\ell$ in a point of $I$.
All these definitions are equivalent up to changing the point $p_0 \in \ell$ in the definition of $I$.

\begin{remark}\label{oneandonly}
By construction, for every translation $t \in [1,w]$, there is exactly one $j\in \Z$ such that the translation $\varphi^j(t)$ is special.
Therefore, every fiber component of type (iii) contains exactly one simplex $[t]$ such that $t$ is a special translation.
For the same reason, every component of type (v) contains exactly three simplices of the form $[r_i | r_{i+1}]$ such that $r_i r_{i+1}$ is a special translation.
\end{remark}

\subsubsection{Matching on fiber components of type (v).}
\label{spec5}
Let $\CC$ be a fiber component of type (v), defined by a bi-infinite sequence of horizontal reflections $(r_i)_{i \in \Z}$.
Consider the sequence of axial order relations ($\prec$ or $\succ$) between consecutive reflections along the sequence $(r_i)_{i \in \Z}$.

\begin{lemma}
    Up to translation of the indices, the sequence of axial order relations among the reflections defining a component of type (v) is
    \[ \dotsb \prec r_{-1} \succ r_0 \succ r_1 \prec r_2 \succ r_3 \succ \dotsb, \]
    i.e., an infinite $3$-periodic repetition of one $\prec$ and two $\succ$.
    \label{lemma:order-reflections-type-v}
\end{lemma}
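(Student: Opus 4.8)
The claim is about the $3$-periodicity (with pattern "one $\prec$, two $\succ$") of the sequence of axial order relations between consecutive reflections $r_i$ defining a type-(v) fiber component. Recall that in such a component $x_{i+3} = \varphi(x_i)$, i.e.\ $r_{i+3} = \varphi(r_i)$ for all $i$, and each pair of consecutive reflections $r_ir_{i+1}$ is a translation $t_i \in [1,w]$ with $t_i = r_i r_{i+1}$. The plan is first to use the conjugation behaviour of $\varphi$ on horizontal reflections (\Cref{lemma:phi-preserves-order} and \Cref{lemma:phi-and-order}) to reduce the infinite sequence of relations to the relations among just three consecutive terms $r_0, r_1, r_2, r_3 = \varphi(r_0)$, and then to pin down those three relations using the "five lines" geometry (\Cref{lemma:five-lines}) together with the EL-labeling computation from Case 2 of \Cref{thm:shellability}.

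First I would establish the periodicity. Since $r_{i+3} = \varphi(r_i)$ and all $r_i$ are horizontal, \Cref{lemma:phi-preserves-order} gives that $r_i \prec r_{i+1}$ if and only if $\varphi(r_i) \prec \varphi(r_{i+1})$, i.e.\ if and only if $r_{i+3} \prec r_{i+4}$. Hence the bi-infinite sequence of order relations $(\,\lessgtr_i\,)_{i\in\Z}$, where $\lessgtr_i$ records whether $r_i \prec r_{i+1}$ or $r_i \succ r_{i+1}$, is $3$-periodic. So it suffices to determine $\lessgtr_0, \lessgtr_1, \lessgtr_2$ and show exactly one of them is $\prec$.

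Next I would analyze these three relations geometrically. By \Cref{lemma:five-lines} applied to the special translation $t = r_jr_{j+1}$ among the three (using \Cref{oneandonly}), exactly one of the three consecutive translations $t_0 = r_0r_1$, $t_1 = r_1r_2$, $t_2 = r_2r_3$ has its two defining reflections' fixed lines on \emph{opposite} sides of $\ell$ in the relevant configuration — more precisely, I would show that among any three consecutive horizontal reflections $r_i, r_{i+1}, r_{i+2}, r_{i+3}=\varphi(r_i)$, the points $\xi(r_i)$ traverse a fundamental domain of $\ell$ (by \Cref{lemma:phi-and-order}, applied to horizontal reflections, $\xi(\varphi(r))$ is $\xi(r)$ translated by the full translation length of $w$), so that in terms of the positions $\xi(r_i)$ along $\ell$ there is exactly one "descent" and two "ascents" — or rather one ascent and two descents once we account for the reversed orientation used for horizontal reflections in \Cref{rem:secondorder}. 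Concretely: by \Cref{lemma:labeling-reformulation}, for horizontal reflections $r \prec r'$ iff $\xi(r)$ is above $\xi(r')$; the four points $\xi(r_0), \xi(r_1), \xi(r_2), \xi(r_3) = w^{-1}(\xi(r_0))$ — using $\xi(\varphi(r)) = w^{-1}(\xi(r))$ from the proof of \Cref{lemma:phi-preserves-order} — lie on the axis with $\xi(r_3)$ strictly below $\xi(r_0)$, so going $\xi(r_0) \to \xi(r_1) \to \xi(r_2) \to \xi(r_3)$ there is a net downward displacement equal to one translation length of $w$, which forces (since the three $\xi(r_i)$ for $i=0,1,2$ are distinct and none coincides with a $w$-translate of another within this window) exactly one step to go "up" (a $\prec$ relation) and two to go "down" (two $\succ$ relations), after matching the reversed orientation convention. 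I would verify the count is exactly "one up, two down" and not "three down" or "two up, one down" by using that the special translation among $t_0,t_1,t_2$ is the unique one whose axis crosses $I$, which by \Cref{lemma:five-lines} is precisely the one where the fixed lines straddle $\ell$, and relating this to which of the three steps is the ascent.

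The main obstacle I expect is bookkeeping the orientation conventions: \Cref{rem:secondorder} uses the \emph{reverse} orientation of $\ell$ to order horizontal reflections, and \Cref{lemma:labeling-reformulation} phrases $\prec$ in terms of "$\xi$ above", so I must be careful that the "net downward displacement by one translation length" of the $\xi$-values translates correctly into "exactly one $\prec$ among three consecutive relations" rather than "exactly one $\succ$". The cleanest way to resolve this is to reduce to a single explicit model computation in the half-plane or Klein model (as in \Cref{fig:shellability-translations} and the proof of \Cref{thm:shellability}, Case 2), where one checks directly that for the $\prec$-increasing factorization $t = r_i r_{i+1}$ of any translation $t \in [1,w]$ one has $r_i \prec r_{i+1}$, so that the single $\prec$ in the period is exactly the one corresponding to the $\prec$-increasing factorization of the special translation, and the other two relations (for $t$ itself viewed "the other way", and for the shifted pair) are $\succ$; combining with the $3$-periodicity established above completes the proof.
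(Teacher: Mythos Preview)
Your periodicity step is correct and matches the paper: since $r_{i+3}=\varphi(r_i)$ and all $r_i$ are horizontal, \Cref{lemma:phi-preserves-order} gives that the pattern of $\prec/\succ$ relations is $3$-periodic.

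The rest of your plan, however, has a genuine gap. Knowing that $\xi(r_3)=w^{-1}(\xi(r_0))$ lies one translation length below $\xi(r_0)$ tells you only that the \emph{net} displacement of the three steps $\xi(r_0)\to\xi(r_1)\to\xi(r_2)\to\xi(r_3)$ is downward; it does not determine how many individual steps go up versus down (the steps could be all down, or two up and one large down, etc.). Your proposed fixes do not close this: \Cref{lemma:five-lines} takes as a \emph{hypothesis} that the two reflections lie on opposite sides of $\ell$, which is essentially what you want to deduce; the ``special translation'' selects one translation per component but says nothing a priori about $\prec$-relations; and your final paragraph is circular (the $\prec$-increasing factorization of $t$ has $r_i\prec r_{i+1}$ by definition, so this cannot be used to show there is a \emph{single} $\prec$ in the period).

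The paper's argument is much shorter and avoids all of this. After establishing $3$-periodicity, it rules out the remaining cases directly:
\begin{itemize}
\item If two consecutive relations were $\prec$, say $r_i\prec r_{i+1}\prec r_{i+2}$, then $w=r_ir_{i+1}r_{i+2}$ would be a $\prec$-increasing factorization of $w$ into three \emph{horizontal} reflections, contradicting the uniqueness of the increasing factorization in \Cref{thm:shellability} (which is the factorization coming from $C_0$ and uses vertical reflections). By $3$-periodicity, this rules out every pattern with at least two $\prec$'s among the three.
\item If all three relations were $\succ$, then $r_0\succ r_1\succ r_2\succ r_3=\varphi(r_0)$, hence $r_0\succ\varphi(r_0)$, contradicting \Cref{lemma:phi-and-order} (which gives $\varphi(r)\succ r$ for horizontal $r$).
\end{itemize}
This leaves exactly one $\prec$ and two $\succ$ per period. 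You already have all the ingredients cited; the missing idea is simply to test the candidate patterns against \Cref{thm:shellability} and \Cref{lemma:phi-and-order} rather than to reconstruct the pattern from the geometry of $\xi$.
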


\begin{proof}
    By \Cref{lemma:phi-preserves-order}, the sequence of order relations is $3$-periodic: $r_i \prec r_{i+1}$ if and only if $r_{i+3} \prec r_{i+4}$.
    In addition, there cannot be two consecutive $\prec$, because then we would have an increasing factorization of $w$ into three horizontal reflections, contradicting \Cref{thm:shellability} (the only increasing factorization of $w$ is induced by the fundamental chamber $C_0$ as in \Cref{lemma:axial-factorization} and uses at least two vertical reflections).
    Finally, if we have $\dotsb \succ r_{-1} \succ r_0 \succ r_1 \succ r_2 \succ \dotsb$ (an all-$\succ$ sequence of order relations), then $r_0 \succ r_3 = \varphi(r_0)$ which is impossible by \Cref{lemma:phi-and-order}.
\end{proof}

Apply the previous lemma and assume that $r_{3i+1} \prec r_{3i+2}$ for all $i \in \Z$.
By \Cref{oneandonly}, among all translations $t_i = r_{3i+1} r_{3i+2}$, exactly one is special.
Without loss of generality, assume that $t_0 = r_1 r_2$ is special.
Then we match simplices of $\CC$ as follows:

\begin{leftbar}
\noindent On the component $\CC$, the matching $\M$ is defined as the unique acyclic matching that has $[r_1 | r_2]$ as the only critical simplex (see \Cref{fig:matching-M}, top).
\end{leftbar}

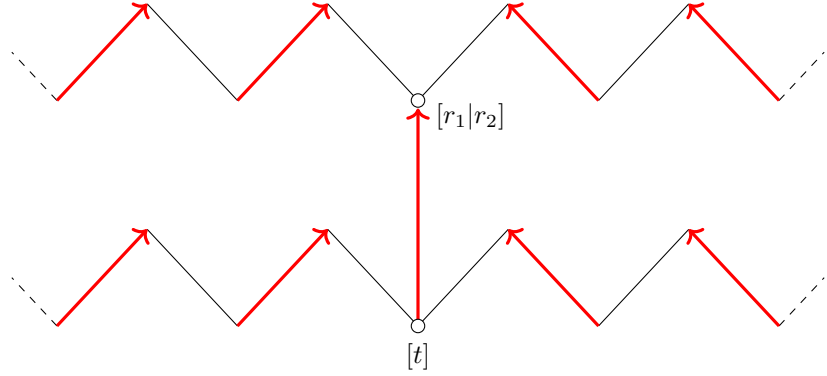
\begin{figure}
	\begin{center}
		\newcommand{\componentwidth}{1.2}
		\newcommand{\height}{-1.5}
		\begin{tikzpicture}
			\clip (-4*\componentwidth, -5) rectangle (6*\componentwidth, 0.3);  %

			\begin{scope}[inner sep=0.1cm]
				\node (0) at (0,0) {};
				\node (1) at (\componentwidth, \height) {};
				\node (2) at (2*\componentwidth, 0) {};
				\node (3) at (3*\componentwidth, \height) {};
				\node (4) at (4*\componentwidth, 0) {};
				\node (5) at (5*\componentwidth, \height) {};
				\node (6) at (6*\componentwidth, 0) {\phantom{$[\,]$}};
				\node (-1) at (-\componentwidth, \height) {};
				\node (-2) at (-2*\componentwidth, 0) {};
				\node (-3) at (-3*\componentwidth, \height) {};
				\node (-4) at (-4*\componentwidth, 0) {\phantom{$[\,]$}};
			\end{scope}

			\coordinate (5a) at ($(5)!0.5!(6)$) {};
			\coordinate (-3a) at ($(-3)!0.5!(-4)$) {};

			\node at (\componentwidth+0.7, -1.6) {$[r_1|r_2]$};

			\draw (0.south) -> (1.north);
			\draw (2.south) -> (1.north);
			\draw (4.south) -> (3.north);
			\draw[dashed] (5.north) -> (5a);
			
			\draw (0.south) -> (-1.north);
			\draw (-2.south) -> (-1.north);
			\draw[dashed] (-3.north) -> (-3a);
			
			\begin{scope}[red, very thick]
				\draw[<-] (-2.south) -- (-3.north);
				\draw[<-] (2.south) -- (3.north);
				\draw[<-] (0.south) -- (-1.north);
				\draw[<-] (4.south) -- (5.north);
			\end{scope}
			
			\begin{scope}[shift={(0, -3)}]a
				\begin{scope}[inner sep=0.1cm]
					\node (0bis) at (0,0) {};
					\node (1bis) at (\componentwidth, \height) {};
					\node (2bis) at (2*\componentwidth, 0) {};
					\node (3bis) at (3*\componentwidth, \height) {};
					\node (4bis) at (4*\componentwidth, 0) {};
					\node (5bis) at (5*\componentwidth, \height) {};
					\node (6bis) at (6*\componentwidth, 0) {\phantom{$[\,]$}};
					\node (-1bis) at (-\componentwidth, \height) {};
					\node (-2bis) at (-2*\componentwidth, 0) {};
					\node (-3bis) at (-3*\componentwidth, \height) {};
					\node (-4bis) at (-4*\componentwidth, 0) {\phantom{$[\,]$}};
				\end{scope}

				\coordinate (5abis) at ($(5bis)!0.5!(6bis)$) {};
				\coordinate (-3abis) at ($(-3bis)!0.5!(-4bis)$) {};
				
				\node at (\componentwidth, \height-0.3) {$[t]$};
				
				\draw (0bis.south) -> (1bis.north);
				\draw (2bis.south) -> (1bis.north);
				\draw (4bis.south) -> (3bis.north);
				\draw[dashed] (5bis.north) -> (5abis);
				
				\draw (0bis.south) -> (-1bis.north);
				\draw (-2bis.south) -> (-1bis.north);
				\draw[dashed] (-3bis.north) -> (-3abis);

				\begin{scope}[red, very thick]
					\draw[<-] (-2bis.south) -- (-3bis.north);
					\draw[<-] (2bis.south) -- (3bis.north);
					\draw[<-] (0bis.south) -- (-1bis.north);
					\draw[<-] (4bis.south) -- (5bis.north);
					
					\draw[<-] (\componentwidth, 1.5) -- (1bis.north);
				\end{scope}

				\begin{scope}[circle]
					\node[fill=white, draw, inner sep=1.8] at (1.north) {};
					\node[fill=white, draw, inner sep=1.8] at (1bis.north) {};
				\end{scope}
			\end{scope}
			
		\end{tikzpicture}
	\end{center}
	
	\caption{The matching $\M$ on a fiber component of type (v) (top) and the corresponding non-exceptional component of type (iii) (bottom), as described in \Cref{spec5,spec3,cfe}.
	The translation $t$ is special and $t=r_1r_2$ is its increasing factorization.}
	\label{fig:matching-M}
\end{figure}

\subsubsection{Matching on non-exceptional fiber components of type (iii).}
\label{spec3}
Let $\CC$ be a non-exceptional fiber component of type (iii). By \cref{oneandonly}, $\CC$ has a unique simplex $[t]$ such that $t$ is a special translation.

\begin{leftbar}
\noindent On the component $\CC$, the matching $\M$ is defined as the unique acyclic matching that has $[t]$ as the only critical simplex (see \Cref{fig:matching-M}, bottom).
\end{leftbar}

\subsubsection{Cross-fiber edges}
\label{cfe}
Let $\CC$ be a non-exceptional fiber component of type (iii) and let $[t]$ be the unique simplex of $\CC$ such that $t$ is a special translation. Then $t$ has a unique increasing factorization $t=r_1r_2$.
By \Cref{lemma:exceptional-components}, the reflections $r_1$ and $r_2$ are horizontal and so $[r_1|r_2]$ belongs to a fiber component of type (v). Notice that, so far, in the construction of $\M$ the simplices $[t]$ and $[r_1|r_2]$ have not been matched yet.

\begin{leftbar}
    \noindent Add to $\M$ all edges $[t]\rightarrow [r_1|r_2]$ where $t$ is a special translation in a non-exceptional component of type (iii) and $t = r_1 r_2$ is its increasing factorization (see \Cref{fig:matching-M}).
\end{leftbar}

\begin{remark}
	The matching $\M$ has no critical cells in $K_W\setminus \Klong$. In fact, let $\CC$ be a fiber component of type (v) and consider the unique simplex $[r_1|r_2] \in \CC$ such that $r_1 \prec r_2$ and $r_1r_2$ is special.
	Since the increasing factorization $t = r_1r_2$ has two horizontal reflections, there is no vertical reflection below $t$, so $[r_1r_2]$ is in a non-exceptional component of type (iii) by \Cref{lemma:exceptional-components} and is matched with $[r_1|r_2]$.
\end{remark}

\subsection{Acyclicity and properness of $\M$}
\label{sec:acyclicity}

\def\weight{\omega}

The matching $\M$ is regular because it does not involve the $0$-cell $[\,]$, which is the only simplex of $K_W$ that is a non-regular face of some other simplex (see \Cref{sec:garside-structures}).
In order to prove the acyclicity and properness of $\M$, we will weight every matched cell by a real number in such a way that the weight (weakly) decreases along directed paths (this will suffice to prove acyclicity) and the weights form a discrete subset of the positive real numbers (which will allow us to prove properness by induction on the weight).

\newcommand{\str}[1]{\underline{t}(#1)}
\newcommand{\sre}[1]{\underline{r}(#1)}

Given a cell $\sigma$ in a fiber component of type (v) or in a non-exceptional component of type (iii), let $\sre{\sigma}$ be the left complement of the special translation associated with the unique cross-fiber edge exiting the fiber component of $\sigma$ (see \Cref{spec5,spec3,cfe}).
Then, construct a poset map $\weight \colon \F(K_W) \setminus \F(\Klong) \to \R$ by setting
\[ \weight(\sigma)= d(\Fix(\sre{\sigma}),\ell), \]
where $d(\cdot,\cdot)$ denotes the distance between two lines in the hyperbolic plane.
Note that $\weight$ is constant on fiber components and $\sre{\sigma}$ is always a horizontal reflection.

\begin{lemma}
	The image of $\omega$ is a discrete subset of the positive real numbers bounded away from $0$.
	\label{lemma:discrete-image}
\end{lemma}

\begin{proof}
	By \Cref{rmk:reflections-distance}, the values taken by $\weight$ are positive and bounded away from zero.
	We now show that, for every $\delta>0$, the intersection $\im(\omega) \cap (0, \delta)$ is finite.
	Let $\sigma$ be a cell such that $\weight(\sigma) < \delta$.
	By definition of special translation, we have that $\xi(\sre{\sigma}) \in I$ (recall that $I$ is introduced at the beginning of \Cref{sec:matching}).
	Therefore, $\Fix(\sre{\sigma})$ intersects the closed $\delta$-neighborhood $N$ of the segment $I$.
	Since $N$ is compact, it intersects only a finite number of reflection hyperplanes.
	In particular, $\weight(\sigma) = d(\Fix(\sre{\sigma}),\ell)$ can only take a finite number of possible values.
\end{proof}

\begin{lemma}
    The function $\weight$ is order-preserving. In addition, $\omega(\sigma) = \omega(\tau)$ whenever $[\tau, \sigma] \in \M$.
\end{lemma}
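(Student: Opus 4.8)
The plan is to prove both claims by reducing to a short list of local configurations, organized by the classification of fiber components (\Cref{lemma:fiber-components}). Throughout I would work on a fixed type-(v) component $\CC = (\rho_j)_{j\in\Z}$, normalized so that the critical simplex is $[\rho_1|\rho_2]$ with $\rho_1\prec\rho_2$ and $t := \rho_1\rho_2$ the (unique) special translation of $\CC$, with $\rho_{j}\rho_{j+1}\rho_{j+2}=w$ for all $j$.

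First I would record a structural fact that settles the second claim. The function $\weight$ is constant not only on each fiber component (immediate, since $\sre{\sigma}$ depends only on the fiber component of $\sigma$), but even on each \emph{pair} formed by $\CC$ together with the non-exceptional type-(iii) component $\CC'$ joined to it by the cross-fiber edge of \Cref{cfe}. Indeed, the short computations $t^{-1}w=\rho_3$ and $wt^{-1}=\rho_0$ (from $\rho_0\rho_1\rho_2=w=\rho_1\rho_2\rho_3$) identify $\CC'$ as the $d=2$ component $\dots-[\rho_0]-[\rho_0|t]-[t]-[t|\rho_3]-[\rho_3]-\dots$, whose translations are the $\varphi^k(t)$ and whose special translation again has left complement $\rho_0$. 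Hence $\sre{\sigma}=\rho_0$ for every $\sigma$ in $\CC\cup\CC'$, so $\weight$ equals $d(\Fix(\rho_0),\ell)$ on all of $\CC\cup\CC'$. Granting this, every $[\tau,\sigma]\in\M$ is either internal to a fiber component (the matchings of \Cref{spec5,spec3}), where $\weight$ is constant, or a cross-fiber edge $[t]\to[\rho_1|\rho_2]$ with both cells in the same pair; so $\weight(\tau)=\weight(\sigma)$ in all cases.

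For order-preservation, I would take a covering relation $\tau\lessdot\sigma$ with $\sigma,\tau\in\F(K_W)\setminus\F(K_W'')$. If $\tau$ and $\sigma$ lie in the same pair there is nothing to prove; using the description of $\CC'$ above one checks directly that the only covering relations leaving a pair are $\partial_1$ or $\partial_2$ of a $3$-simplex $[\rho_i|\rho_{i+1}|\rho_{i+2}]$ of $\CC$ not involving $t$, and $\partial_0,\partial_2,\partial_1$ of a $2$-simplex $[\rho_i|\rho_{i+1}]$ of $\CC$, landing (when in the domain at all) in the pairs of $[\rho_{i+1}]$, $[\rho_i]$, and of the translation $\rho_i\rho_{i+1}$. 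In each such case the target lies in a pair $\mathcal P'$ whose special translation has, by the same relations, a left complement of the form $\varphi^{k}(\rho_j)$ for one of the reflections $\rho_j$ occurring in $\sigma$; since $\varphi$ is conjugation by $w$ and $w$ preserves $\ell$, it preserves distances to $\ell$, so the desired inequality $\weight(\mathcal P')\le\weight(\sigma)=d(\Fix(\rho_0),\ell)$ becomes $d(\Fix(\rho_j),\ell)\le d(\Fix(\rho_0),\ell)$.

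The hard part will be this last distance comparison, and this is where the geometry of \Cref{sec:coxeter-elements} is needed. The target of such a face map lies in the domain only when the translation appearing in it is non-exceptional, i.e.\ (\Cref{lemma:exceptional-components}) has no vertical reflection below it; by the five-lines lemma (\Cref{lemma:five-lines}) this forces its increasing factorization to consist of two horizontal reflections on opposite sides of $\ell$, which pins down the location of $\Fix(\rho_j)$ relative to $\ell$ and to the translation's axis. Combining this with the defining normalization $\xi(\rho_0)\in I$, the order pattern of \Cref{lemma:order-reflections-type-v}, and \Cref{lemma:labeling-reformulation} (which translates $\prec$ into the position of $\xi$ along $\ell$), one obtains $d(\Fix(\rho_j),\ell)\le d(\Fix(\rho_0),\ell)$. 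I expect the real obstacle to be bookkeeping: tracking which reflection's fixed line controls $\weight$ after each face map, and verifying that non-exceptionality of the relevant translation is exactly what rules out the configurations in which that fixed line would be farther from $\ell$ than $\Fix(\rho_0)$.
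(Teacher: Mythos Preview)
Your overall organization is sound and lands on the right inequality $d(\Fix(\rho_j),\ell)\le d(\Fix(\rho_0),\ell)$, and your use of $\varphi$-invariance of $d(\Fix(\cdot),\ell)$ is exactly right. But the route is more elaborate than necessary, and your plan for the last step points in the wrong direction. The paper does not track ``pairs'' of components or enumerate face maps. Instead it observes two things. First, for the type-(v) component $\CC$ of $\sigma$ (normalized with $\rho_0=\sre{\sigma}$ and $\rho_1\prec\rho_2$), a \emph{single} application of the five-lines lemma to the \emph{source} translation $t=\rho_1\rho_2$ shows that $\Fix(\rho_1)$ separates $\ell$ from $\Fix(\rho_0)$ and $\Fix(\rho_2)$ separates $\ell$ from $\Fix(\rho_3)$; hence $d(\Fix(\rho_1),\ell)$ and $d(\Fix(\rho_2),\ell)$ are both strictly below $d(\Fix(\rho_0),\ell)=d(\Fix(\rho_3),\ell)$. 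By $\varphi$-invariance this yields
\[
\weight(\sigma)=d(\Fix(\rho_0),\ell)=\max_{i\in\Z}d(\Fix(\rho_i),\ell).
\]
Second, if $\tau$ is any face of $\sigma$ lying in a non-exceptional type-(iii) component, then (exactly by the computation you sketched for the pair $\CC\cup\CC'$) the left complement of any translation in that component is some $\varphi^k(\rho_m)$, so $\weight(\tau)=d(\Fix(\rho_m),\ell)$ for some $m$. The inequality $\weight(\tau)\le\weight(\sigma)$ is then immediate from the displayed maximum.

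The concrete place where your sketch goes astray is the last paragraph: you propose to apply the five-lines lemma to the \emph{target} translation and to use its non-exceptionality to control the position of $\Fix(\rho_j)$. Non-exceptionality of the target is irrelevant to the distance comparison; it only says that $\tau$ lies in the domain of $\weight$. All the geometry you need is already in the five-lines configuration of the source translation $t=\rho_1\rho_2$, which gives the inequality for every residue class of $j$ modulo $3$ at once. Your ``bookkeeping'' worry evaporates once you reformulate the claim as $\weight(\sigma)=\max_i d(\Fix(\rho_i),\ell)$.

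For the second claim, your argument via pairs is correct; the paper is even briefer: since $\weight$ is constant on fiber components, only cross-fiber edges matter, and for such an edge $[t]\to[\rho_1|\rho_2]$ both values equal $d(\Fix(wt^{-1}),\ell)$.
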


\begin{proof}
    Suppose that $\tau$ is a face of $\sigma$, with both simplices belonging to $\F(K_W) \setminus \F(\Klong)$.
    For the first part of the claim, we want to show that $\weight(\tau) \leq \weight(\sigma)$.
    The only non-trivial case is if $\tau$ and $\sigma$ belong to different components, and this only happens if $\tau$ is in a (non-exceptional) component of type (iii) and $\sigma$ is in a component of type (v).

    Let $(r_i)_{i \in \Z}$ be the sequence of (horizontal) reflections defining the fiber component $\CC$ of $\sigma$.
    By \Cref{lemma:order-reflections-type-v}, without loss of generality,
    \[ \dotsb \prec r_{-1} \succ r_0=\sre{\sigma} \succ r_1 \prec r_2 \succ r_3 \succ \dotsb. \]
    By Case 2 in the proof of \Cref{thm:shellability} applied to the (special) translation $t=r_1r_2$, the fact that $r_1\prec r_2$ implies that $r_1$ and $r_2$ must be the $\prec$-first and $\prec$-last reflections below $t$, and so the fixed lines of $r_1$ and $r_2$ are on opposite sides of $\ell$.  Since $r_0=\sre{\sigma}$ is the left complement of  the (special) translation $t=r_1r_2$, it follows immediately from \Cref{lemma:five-lines} that the distances $d(\Fix(r_1), \ell)$ and $d(\Fix(r_2), \ell)$ are both strictly lower than the distance $d(\Fix(r_0), \ell) = d(\Fix(r_3), \ell)$.
    Now recall that the distance $d(\Fix(\cdot), \ell)$ is invariant under conjugation by $w$, whence $\weight(\sigma)=d(\Fix(r_{3k}),\ell)$ and $d(\Fix(r_{i}),\ell)=d(\Fix(r_{3k+i}),\ell)$ for all $k$ and for $i=1,2$.
    This implies that $ \weight(\sigma) = \max \{ d(\Fix(r_i), \ell) \mid i \in \Z \} $.
    In addition, $\weight(\tau) = d(\Fix(r_i), \ell)$ for some $i \in \Z$ and therefore $\weight(\tau) \leq \weight(\sigma)$.

    For the second part of the claim, since $\weight$ is constant on fiber components, it is enough to check that $\weight$ is constant along cross-fiber edges of $\M$.
    Such edges are of the form $[t]\to [r_1\vert r_2]$ where $t=r_1r_2$ is a special translation. By definition of $\weight$, we have $\weight([t])=d(\Fix(wt^{-1}),\ell)=\weight([r_1\vert r_2])$.
\end{proof}

\begin{lemma}
    The matching $\M$ is acyclic.
    \label{lemma:acyclic-matching}
\end{lemma}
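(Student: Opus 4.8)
The plan is to establish acyclicity in three moves: first confine a hypothetical directed cycle to the matched part $\F(K_W)\setminus\F(K_W'')$, then confine it to a single level set of the weight function $\weight$, and finally rule out cycles inside a level set by analyzing how the relevant fiber components are glued together.

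For the first move I would recall that $\F(K_W)$ is graded by dimension, so every directed cycle of the oriented Hasse diagram $H_\M$ alternates between edges of $\M$ and edges not in $\M$. A critical cell of $\M$ is incident only to non-$\M$-edges, hence cannot lie on such a cycle; since every cell of $K_W''$ is critical and $\M$ has no other critical cells, any directed cycle $\gamma$ must lie in $\F(K_W)\setminus\F(K_W'')$, where $\M$ is a perfect matching. For the second move I would invoke the previous lemma: $\weight$ is a poset map and is constant on every edge of $\M$. Traversing $\gamma$, the value of $\weight$ is therefore constant along the $\M$-edges (traversed from a face up to a coface) and weakly decreasing along the remaining edges (traversed from a cell down onto a codimension-one face), so it is non-increasing around $\gamma$; being a cycle, $\gamma$ has constant weight $c$ and lies entirely in $\weight^{-1}(c)$.

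The third move is the heart of the argument. I would describe which fiber components of $K_W\setminus K_W''$ meet $\weight^{-1}(c)$: picking a type-(v) component $\CC'$ of weight $c$ with defining sequence $(r_i)_{i\in\Z}$, indexed so that $r_1 r_2$ is its special translation and $r_0$ the left complement of $r_1 r_2$, \Cref{lemma:five-lines} gives $d(\Fix(r_1),\ell),\, d(\Fix(r_2),\ell) < d(\Fix(r_0),\ell) = c$, so the maximum of $d(\Fix(r_i),\ell)$ over $i$ is attained exactly on the indices $i\equiv 0 \pmod 3$. Following the cross-fiber edge $[r_1 r_2]\to[r_1|r_2]$ identifies the partner type-(iii) component $\CC$, whose sequence turns out to be $\ldots, r_0, r_1 r_2, r_3, r_4 r_5, r_6, \ldots$; in particular $\weight(\CC)=c$ and the horizontal reflections occurring in $\CC$ are exactly the $r_{3m}$. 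A face-by-face inspection — using that $w$ is the only element of reflection length $3$ in $[1,w]$ — then shows that, within $\weight^{-1}(c)$, the components $\CC$ and $\CC'$ are joined only along the single cross-fiber edge (oriented $\CC\to\CC'$), every other $H_\M$-edge leaving $\CC'$ or $\CC$ going either into the other of the two or into a component of weight $<c$ or into $K_W''$, and that the only edges of $H_\M$ leaving the target simplex $[r_1|r_2]$ are $[r_1|r_2]\to[r_1]$ and $[r_1|r_2]\to[r_2]$, with $\weight([r_1]),\,\weight([r_2]) < c$ again by \Cref{lemma:five-lines}. With this in hand I would conclude: $\gamma$ cannot stay inside a single fiber component (the componentwise matchings of \cite[Section 7]{paolini2021proof} are acyclic); it cannot pass from $\CC$ back to $\CC'$ except along the cross-fiber edge, but landing at $[r_1|r_2]$ forces the very next step to drop the weight below $c$; so $\gamma$ cannot close up, a contradiction.

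I expect the bookkeeping in the third move to be the main obstacle: one must track the face maps $\partial_i$ on the $3$-simplices of $\CC'$, recognize the resulting sub-sequences of $(r_i)$ as the sequences defining neighbouring type-(iii) components, and check with \Cref{lemma:five-lines} (together with $\varphi$-invariance of the distance to $\ell$) that all neighbours other than $\CC$ sit strictly below weight $c$ — so that the only ``bridges'' inside a level set are the cross-fiber edges, oriented in the one direction that prevents a directed cycle from closing.
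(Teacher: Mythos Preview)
Your approach is correct and the core ideas match the paper's: restrict to a level set of $\weight$, observe that componentwise matchings are acyclic, and use the Five Lines Lemma to show that the step immediately following a cross-fiber edge $[t]\to[r_1|r_2]$ must land on $[r_1]$ or $[r_2]$ and hence strictly decrease $\weight$. Where you diverge is in the organization of the ``third move''. You carry out a structural analysis of the level set $\weight^{-1}(c)$, identifying the paired components $(\CC,\CC')$ and checking face-by-face that all other exits from this pair go to strictly smaller weight or into $K_W''$. This works (and you correctly flag the bookkeeping as the main obstacle), but the paper bypasses it entirely with a clean two-case split: either $\gamma$ contains no cross-fiber edge, in which case every matching edge of $\gamma$ lies in a single fiber of $\eta$ and a second application of the Patchwork theorem (now with $\eta$ rather than $\weight$) forces $\gamma$ into one fiber component, a contradiction; or $\gamma$ contains a cross-fiber edge, in which case your own observation about $[r_1|r_2]\to[r_i]$ immediately gives the weight drop. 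The paper's route avoids any analysis of which $2$-fiber components border a given type-(v) component, and in particular sidesteps the issue (implicit in your sketch) of whether several pairs $(\CC,\CC')$ of the same weight could be linked. Your more hands-on analysis would, in exchange, give a finer picture of the directed graph $H_\M$ restricted to each level set.
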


\begin{proof}
    By the Patchwork theorem (\Cref{thm:patchwork}),
    it is enough to show that there is no alternating cycle on which $\weight$ is constant.
    Suppose for the sake of contradiction that such an alternating cycle $\gamma$ exists.
    
    \emph{Case 1:} suppose that $\gamma$ has no cross-fiber edges.
    Then $\gamma$ is also an alternating cycle with respect to the smaller matching $\M' \subseteq \M$ where all cross-fiber edges are removed.
    Note that every edge of $\M'$ matches elements in the same fiber component.
    By the Patchwork theorem applied to the map $\eta$ of \Cref{defeta}, $\gamma$ needs to be entirely contained in a single fiber component.
    However, the restriction of $\M'$ to every fiber component is obviously acyclic.
    
    \emph{Case 2:} suppose that $\gamma$ has at least one cross-fiber edge $[t] \to [r_1|r_2]$, where $t$ is a special translation and $r_1r_2$ is its increasing factorization.
    After that edge, $\gamma$ has to continue either with $[r_1|r_2] \to [r_1]$ or $[r_1|r_2] \to [r_2]$. In both cases, the value of $\weight$ strictly decreases by \Cref{lemma:five-lines}, which is a contradiction.
\end{proof}

\begin{lemma}
    The matching $\M$ is proper.
    \label{lemma:proper-matching}
\end{lemma}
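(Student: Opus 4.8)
The plan is to exploit the weight function $\weight$ introduced before \Cref{lemma:acyclic-matching}. Recall from the previous lemma that $\weight$ is order-preserving and constant along the edges of $\M$, hence (weakly) decreasing along every directed path of $H_\M$ that stays inside $K_W \setminus K_W''$. The first step is a reduction: since $\M$ involves no cell of $K_W''$, every cell of $K_W''$ is critical, so from such a cell a directed path can only descend through faces and therefore reaches finitely many cells, all inside $K_W''$; in particular, a path that enters $K_W''$ never leaves. Consequently it suffices to bound, for a fixed $\sigma \in \F(K_W) \setminus \F(K_W'')$, the number of cells of $K_W \setminus K_W''$ reachable from $\sigma$ (every reachable cell of $K_W''$ is then an iterated face of a face of some reachable cell of $K_W \setminus K_W''$, so finitely many).

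The second step is to pin down which fiber components can be visited. I would first check that the set $D$ of values of $\weight$ is a discrete subset of $(0,\infty)$ with no finite accumulation point: by $\varphi$-invariance of $r \mapsto d(\Fix(r),\ell)$, $D$ consists of distances $d(\Fix(r),\ell)$ for horizontal reflections $r$ whose closest point $\xi(r)$ lies in a compact fundamental segment of $\ell$; such a distance being $\le M$ forces $\Fix(r)$ to meet a fixed compact set, and the reflection arrangement is locally finite, so only finitely many such $r$ exist, while \Cref{rmk:reflections-distance} gives a positive lower bound. Thus $D\cap(0,\weight(\sigma)]$ is finite. The same compactness argument shows that for each value $v$ there are only finitely many fiber components of type (iii) or (v) with weight $v$ (one of type (iii) is a $\langle\varphi\rangle$-orbit of horizontal reflections at distance $v$ from $\ell$; one of type (v) is determined by the left complement of its partner's special translation, a horizontal reflection at distance $v$ with closest point in that segment). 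Since $\weight$ weakly decreases along directed paths, every cell of $K_W \setminus K_W''$ reachable from $\sigma$ lies in the union $\mathcal G$ of the finitely many fiber components of weight $\le \weight(\sigma)$.

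The third step, which I expect to be the main obstacle, is to bound the number of reachable cells inside the finitely many components of $\mathcal G$. Here I would use the explicit shape of $H_\M$: inside any fiber component of type (iii) or (v) the matching restricts to a zigzag matching with a single critical cell (the distinguished cells of \Cref{spec3,spec5}), so a directed path confined to one component marches monotonically towards that cell and has length controlled by the ``index'' of its starting cell; the only edge of $H_\M$ leaving a type-(iii) component for another component of $\mathcal G$ is the cross-fiber $\M$-edge of \Cref{cfe}, which by acyclicity (\Cref{lemma:acyclic-matching}) can be traversed at most once along a directed path; and the only edges leaving a type-(v) component go from a $3$-cell $T$ to one of the faces $\partial_1(T),\partial_2(T)$, or from its distinguished cell $[r_1|r_2]$ to $[r_1]$ or $[r_2]$, each target lying in $K_W''$ (when the rank-two element involved is a rotation, i.e.\ in a type-(ii) component, cf.\ \Cref{rmk:components-intersecting-XW''}) or in a type-(iii) component. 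A directed path inside $\mathcal G$ therefore splits into segments, each contained in a single component; transitions from a type-(iii) segment to a type-(v) segment are exactly the cross-fiber traversals, of which at most $|\mathcal G|$ occur, so there are boundedly many type-(v) segments, each of bounded length because a type-(v) component is re-entered only at its distinguished cell; this bounds the number of reachable cells of type-(v) components, hence the number of entries into type-(iii) components, hence the number of type-(iii) segments, each of which again has bounded length because its entry cell sits at a bounded index. Summing over the finitely many components of $\mathcal G$ gives the desired finiteness, and with the first step this proves that $\M$ is proper. The delicate point throughout is exactly the bookkeeping of how a directed path may move among fiber components: the weight confines it to finitely many, and the scarcity of cross-fiber edges prevents it from climbing arbitrarily high inside any one of them; the geometric facts this rests on (monotonicity of distances along a translation axis via \Cref{lemma:five-lines}, and the classification of \Cref{lemma:fiber-components} together with \Cref{lemma:order-reflections-type-v,lemma:exceptional-components}) are all already available.
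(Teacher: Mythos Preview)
Your argument is correct but takes a noticeably more laborious route than the paper's. Both approaches use the discreteness of the values of $\weight$, but they exploit it differently.

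The paper's proof runs in two strokes. First, \emph{ignoring cross-fiber edges}, from any simplex only finitely many simplices are reachable: within a single fiber component this is clear from the zigzag shape, and a path can change fiber component only by taking an inner face, which strictly decreases $\eta$ and so happens at most $d-1$ times. Second, it therefore suffices to bound reachability from any $[t]$ with $t$ a special translation, and this is done by a direct induction on $\weight([t])$: after $[t]\to[r_1|r_2]\to[r_i]$ one is forced into a type-(iii) component whose special translation $[t_i]$ satisfies $\weight([t_i])<\weight([t])$ (the strict inequality coming from \Cref{lemma:five-lines}). That strict decrease is the single idea that collapses the whole analysis.

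Your approach instead first confines the path to a finite union $\mathcal G$ of fiber components (via weak monotonicity and discreteness of $\weight$) and then bounds reachable cells inside $\mathcal G$ by tracking transitions between components. This works, but two points deserve care. Your inventory of edges leaving a type-(v) component is incomplete: non-distinguished $2$-cells $[r_i|r_{i+1}]$ also have downward edges to $[r_i]$, $[r_{i+1}]$, $[r_ir_{i+1}]$, all in type-(iii). This is harmless because such $2$-cells can only occur in the initial segment of the path (every subsequent entry into type-(v) lands at the distinguished cell and exits immediately), so the reachable type-(v) cells are still bounded a priori; but it should be said. Second, your phrasing in terms of ``segments of a path'' and acyclicity really concerns individual paths, whereas properness is about the \emph{set} of reachable cells; the bridge is that the set of possible entry points into type-(iii) components is contained in the faces of the (finitely many) reachable type-(v) cells, which is an a priori finite set independent of the path. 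Once one isolates the strict decrease of $\weight$ across a cross-fiber step, the paper's induction subsumes all of this bookkeeping in one line.
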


\begin{proof}
    Without using cross-fiber edges,
    from any simplex it is possible to reach only a finite number of simplices. Indeed, this is true while staying in the same fiber component, and any path starting from a $d$-simplex can only change the fiber component $d-1$ times.
    
    Therefore, it is enough to check that there are only a finite number of simplices reachable from $[t]$ where $t$ is any special translation.
    We prove this by induction on $\weight([t])$, which takes values in a discrete subset of the positive real numbers bounded away from $0$ (by \Cref{lemma:discrete-image}).
    A directed path starting from $[t]$ necessarily begins with $[t] \to [r_1|r_2] \to [r_i]$ where $i\in \{1,2\}$.
    Then it continues as an alternating path inside the fiber component $\CC_i$ of $[r_i]$ until it reaches the unique simplex $[t_i] \in \CC_i$ such that $t_i$ is a special translation.
    As already noted in the proof of \Cref{lemma:acyclic-matching}, we have that $\weight([t_i]) = \weight([r_i]) < \weight([t])$ by \Cref{lemma:five-lines}.
    By induction, there are only a finite number of simplices reachable from $[t_1]$ and $[t_2]$, so there are only a finite number of simplices reachable from $[t]$.
\end{proof}

\subsection{Construction of the matching $\mathcal N$}
\label{sec:matching-N}

\def\MM{\mathcal{N}}
\def\mf{\mu}

In this section, we construct an acyclic and proper matching $\MM$ on $\Kshort$ whose critical cells are exactly the faces of $ \Ksparse$.
The construction of $\MM$, as well as the proof of acyclicity and properness, closely follow the treatment given in \cite[Section 8]{paolini2021proof} for the case of affine Artin groups,
but notation differs slightly: our complexes $\Ksparse$ and $\Kshort$ respectively take the place of $X'_W$ and $K_W'$ in \cite{paolini2021proof} where, accordingly, a matching on $\F(K_W')\setminus \F(X'_W)$ is constructed (note that $X_W' = \Ksparse$ in the affine case).

The matching $\MM$ will be defined as the set of orbits of an involution $\mf$ on $\F(\Kshort)\setminus \F(\Ksparse)$ (see \Cref{def:NN} below).
For the remainder of this subsection, we consider  simplexes of the form
\[
	\sigma=[x_1\vert x_2 \vert \cdots \vert x_k] \in \F(\Kshort)\setminus \F(\Ksparse)
\]
and we set $\pi(\sigma):=x_1\cdots x_k \in [1,w]$. %

\begin{definition}
   	Let $\sigma=[x_1\vert \cdots \vert x_d]$ be such that  $\pi(\sigma)=w$.
	The depth $\delta = \delta(\sigma)$ is the minimum $i \in \{ 1, 2, \dotsc, d\}$ such that either $x_i$ has reflection length at least $2$ or else $i\leq d-1$ and $x_i$ precedes in the axial ordering every reflection  that is below $x_{i+1}$  in $[1,w]$. If no such $i$ exists, set $\delta(\sigma)=\infty$.
\end{definition}

\begin{lemma}
    Let $\sigma=[x_1\vert \cdots \vert x_d]$ be such that  $\pi(\sigma)=w$ and  $x_2\cdots x_d$ fixes some vertex of $C_0$. Then $\delta(\sigma)<\infty$.
    \label{lemma:delta-not-infinity}
\end{lemma}

\begin{proof}
    This is the claim of \cite[Lemma 8.4]{paolini2021proof}, whose proof  
    relies on \cite[Lemma 8.2]{paolini2021proof} and \cite[Proposition 3.10 and Remark 3.2]{paolini2021proof}. These correspond to our Lemmas \ref{lemma:smallest-reflections} and \ref{lemma:axial-vertices-orbits}, respectively.
\end{proof}

\begin{definition}[The involution $\mu$]
    Recall the map $\eta$ from \eqref{eq:eta} and let $d = \eta(\sigma)$, so that $\sigma = [x_1\vert \dotsb \vert x_k]$ belongs to a $d$-fiber component $\CC$.
    \begin{enumerate}
        \item If $\pi(\sigma)\neq w$, then $k=d-1$ and we let $\mf(\sigma)$ be the simplex $[x \vert x_1\vert \cdots \vert x_{d-1}] \in \CC$ that lies immediately to the left of $\sigma$.
    
        \item If $\pi(\sigma)= w$ (so $k=d$) and $x_2\cdots x_d$ does not fix any vertex of $C_0$, let $\mf(\sigma) = [x_2\vert \cdots \vert x_d]$.
    \end{enumerate}
    Suppose now that $\pi(\sigma) = w$ (so $k=d$) and $x_2\cdots x_d$ fixes a vertex of $C_0$. Let $\delta = \delta(\sigma)$ and note that $\delta \neq \infty$ by \Cref{lemma:delta-not-infinity}.
    \begin{enumerate}
        \setcounter{enumi}{2}
        \item If $x_\delta$ is not a reflection, let $\mf(\sigma) = [x_1|\cdots |x_{\delta -1}|y|z|x_{\delta+1}| \cdots |x_d]$, where $x_\delta= yz$ and $y$ is the $\prec$-smallest reflection below $x_\delta$ in $[1,w]$.
    
        \item If $x_\delta$ is a reflection, let $\mf(\sigma) = [x_1|\cdots |x_{\delta}x_{\delta+1}| \cdots |x_d]$.
    \end{enumerate}
\end{definition}

\begin{lemma}
    The map $\mu$ is well-defined and involutive on $\F(\Kshort)\setminus \F(\Ksparse)$.
\end{lemma}

\begin{proof}
    The claim can be proved by the same arguments as \cite[Lemma 8.8 and Proposition 8.9]{paolini2021proof}, using properties of vertical reflections proved in \Cref{lemma:conjugate-smallest-reflections} and \Cref{lemma:smallest-reflections} (which replace \cite[Lemmas 8.1 and 8.2]{paolini2021proof}), the order-preserving correspondence between elements of $[1,w]$ and linear subspaces of $\R^3$ proved in \Cref{lemma:mov} (which replaces \cite[Lemmas 2.15 and 2.16]{paolini2021proof}), and \Cref{lemma:rotations} (which replaces \cite[Lemma 3.18]{paolini2021proof}).
    We also use the fact that the natural edge-labeling of $[1,w]$ is an EL-labeling with respect to the axial ordering (\Cref{thm:shellability}).
\end{proof}

\begin{definition}
    The matching $\MM$ on $\F(\Kshort)\setminus \F(\Ksparse)$ consists of all pairs of cells $(\mu(\sigma), \sigma)$ such that $\mu(\sigma)$ is a face of $\sigma$.
    \label{def:NN}
\end{definition}

\begin{lemma}
    The matching $\MM$ is proper and acyclic.
\end{lemma}

\begin{proof}
    The main tool consists in a special total ordering $\triangleleft$ on the set of all minimal length factorizations of $w$ as a product of reflections. This total ordering is induced by the axial ordering $\prec$ as defined in \cite[pp.\ 548-549]{paolini2021proof}.
    Now, given any simplex $\sigma=[x_1|x_2|\cdots |x_k] \in \F(\Kshort)\setminus \F(\Ksparse)$, we can consider the minimal length factorization of $w$ obtained by concatenating the $\prec$-increasing factorizations of $y, x_1, x_2, \dotsc, x_k$, where $y$ is the left complement of $x_1x_2\dotsm x_k$ (such increasing factorizations are unique because of the EL property).
    This defines an order-preserving map from $\F(\Kshort)\setminus \F(\Ksparse)$ to the (totally) $\triangleleft$-ordered set of minimal length factorizations of $w$.
    Properness and acyclicity of $\MM$ follow from the fact that this order-preserving map is strictly increasing along alternating paths of $\MM$. This can be proved as in \cite[Lemmas 8.10-8.13]{paolini2021proof} with (extensive) use of the EL property of the axial ordering (\Cref{thm:shellability}).
\end{proof}

\subsection{Proof of \Cref{thm:K-X'}}

\begin{proof}[Proof of \Cref{thm:K-X'}]
    By \Cref{lemma:acyclic-matching,lemma:proper-matching}, $\M$ is a proper acyclic matching on $K_W$ having $\Klong$ as the subcomplex of critical simplices.
    Using discrete Morse theory (\Cref{thm:dmt}), we deduce that $K_W \searrow \Klong$.
    \Cref{prop:K''-K'} shows that $\Klong \searrow \Kshort$.
    Discrete Morse theory applied to the matching $\MM$ of \Cref{sec:matching-N} shows that $\Kshort \searrow \Ksparse$. Combining these collapses we obtain a collapse $K_W \searrow \Ksparse$.
    In addition, $K_{W_T} \searrow \Ksparse[W_T]$ for all infinite proper standard parabolic subgroups $W_T$ by \cite[Theorem 8.14]{paolini2021proof} (such subgroups are necessarily affine of type $\tilde A_1$).
    Finally, \Cref{prop:inductive-collapse} implies that $K_W \searrow X_W'$.
\end{proof}

\section{Consequences for rank-three Artin groups}
\label{sec:consequences}

In this section, we use the theory developed in the rest of the paper to derive several results about rank-three Artin groups.
All results are already known in the spherical and affine cases, so in our proofs below we implicitly restrict ourselves to the hyperbolic case.

Using the deformation retraction $K_W \searrow X_W'$, we are now able to establish the isomorphism between the standard and the dual Artin groups associated with $W$, as well as the $K(\pi, 1)$ conjecture in the rank-three case. Note that the $K(\pi, 1)$ conjecture was already proved by other means by Charney and Davis \cite{charney1995k} for $2$-dimensional Artin groups, which include rank-three Artin groups as a special case.

\begin{theorem}
    If $W$ has rank three, then the natural map from the standard Artin group $G_W$ to the dual Artin group $W_w$ is an isomorphism.
    \label{thm:standard-dual-isomorphism}
\end{theorem}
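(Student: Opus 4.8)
The isomorphism will follow from the homotopy-theoretic results already established, by realizing the natural homomorphism $G_W\to W_w$ as a composition of maps on fundamental groups that \Cref{thm:K-X'} and \Cref{thm:XW'} turn into isomorphisms. Recall that $W_w$ is by definition $\pi_1(K_W)$, that $\pi_1(Y_W)\cong G_W$ \cite{van1983homotopy}, and that $S\subseteq[1,w]$, so each $s\in S$ determines a dual generator $[s]\in W_w$. Since $W_{\{s\}}\cong\Z/2\Z$ is finite, the interval complex $K_{W_{\{s\}}}$ — the circle spanned by the $0$-cell $[\,]$ and the $1$-cell $[s]$ — is a subcomplex of $X_W'$ (hence of $K_W$), and its homotopy class in $\pi_1(K_W)$ is $[s]$.

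The next step is to consider the composition
\[
    \Psi\colon\quad G_W\;=\;\pi_1(Y_W)\;\xleftarrow[\ \sim\ ]{\ \alpha\ }\;\pi_1(X_W')\;\xrightarrow{\ \iota_*\ }\;\pi_1(K_W)\;=\;W_w,
\]
where $\alpha$ is the isomorphism induced by the homotopy equivalence $X_W'\simeq Y_W$ of \Cref{thm:XW'} and $\iota_*$ is induced by the inclusion $\iota\colon X_W'\hookrightarrow K_W$. By \Cref{thm:K-X'} the map $\iota$ is a homotopy equivalence, so $\iota_*$ is an isomorphism; hence $\Psi$ is an isomorphism. It then remains to identify $\Psi$ with the natural map, i.e.\ to show $\Psi(s)=[s]$ for every $s\in S$. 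Under $\alpha^{-1}$ the standard generator $s\in G_W=\pi_1(Y_W)$ corresponds to the class of the loop $K_{W_{\{s\}}}\subseteq X_W'$, because the homotopy equivalence of \Cref{thm:XW'} is constructed so that the $1$-cell $[s]$ corresponds to the standard generator of $G_W$ (see the proof of \cite[Theorem 5.5]{paolini2021proof}). Applying $\iota_*$ then sends this loop to $[s]\in W_w$, so $\Psi(s)=[s]$, as required.

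The one delicate point is precisely this last compatibility: that the homotopy equivalence $X_W'\simeq Y_W$ carries the loop $K_{W_{\{s\}}}$ to the standard generator $s$. This is not deep — through the way $X_W'$ is assembled from the interval complexes of its finite standard parabolics, it reduces to the case of rank at most $1$, in which $K_{\Z/2\Z}$ and the Salvetti circle are literally identified — but it does require unwinding the construction behind \cite[Theorem 5.5]{paolini2021proof}; this is exactly the compatibility spelled out in the discussion of the dual approach in \cite{paolini2021dual}. Given \Cref{thm:K-X'}, no further argument is needed.
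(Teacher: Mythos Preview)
Your proof is correct and follows essentially the same approach as the paper: the natural map $G_W\to W_w$ is identified with the map on fundamental groups induced by the inclusion $X_W'\hookrightarrow K_W$, which is a homotopy equivalence by \Cref{thm:K-X'}. Your additional care in verifying that this map really is the ``natural'' one (sending $s\mapsto[s]$) is a welcome elaboration of a point the paper leaves implicit.
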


\begin{proof}
    The natural map $G_W \to W_w$ is induced by the inclusion of the subcomplex $X_W'$ into $K_W$, which is a homotopy equivalence by \Cref{thm:deformation-retraction}.
\end{proof}

\begin{theorem}
    The $K(\pi, 1)$ conjecture holds for all rank-three Artin groups $G_W$.
    \label{thm:conjecture}
\end{theorem}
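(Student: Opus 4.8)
The plan is to combine the structural results already established; the statement is essentially a formal corollary. First, by \Cref{thm:lattice} the interval $[1,w]$ is a lattice, and it is balanced (\Cref{sec:coxeter-groups}). Hence \Cref{thm:garside} applies: the dual Artin group $W_w$ is a Garside group and, crucially, the interval complex $K_W$ is a classifying space for $W_w$. In particular $K_W$ is aspherical and $\pi_1(K_W) = W_w$.

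Next I would bring in the deformation retraction $K_W \searrow X_W'$ of \Cref{thm:deformation-retraction}, together with \Cref{thm:XW'}, which asserts that $X_W'$ is homotopy equivalent to the orbit configuration space $Y_W$. Composing these gives homotopy equivalences $Y_W \simeq X_W' \simeq K_W$, so $Y_W$ inherits asphericity from $K_W$, and $\pi_1(Y_W) \cong \pi_1(K_W) = W_w$. Finally, by \Cref{thm:standard-dual-isomorphism} the natural map $G_W \to W_w$ is an isomorphism; since the standard identification $\pi_1(Y_W) \cong G_W$ of \cite{van1983homotopy} is compatible with the homotopy equivalence $Y_W \simeq K_W$ (both the map $G_W \to W_w$ of \Cref{thm:standard-dual-isomorphism} and the identification $\pi_1(X_W') \cong G_W$ used in the proof of \Cref{thm:XW'} in \cite[Theorem 5.5]{paolini2021proof} are realized by the inclusion $X_W' \hookrightarrow K_W$), we conclude that $Y_W$ is an aspherical space with fundamental group $G_W$, i.e., a classifying space for $G_W$. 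As noted at the start of this section, the spherical and affine cases are already known, so this finishes the proof.

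There is no genuine obstacle here: all the real work lies in \Cref{thm:lattice} (the lattice property) and \Cref{thm:deformation-retraction} (the collapse $K_W \searrow X_W'$), both proved above, while the present statement merely assembles them. The only point deserving a line of care is the compatibility of the various fundamental-group identifications, which is immediate because every map in sight is induced by the inclusion $X_W' \hookrightarrow K_W$ — so no diagram chase beyond a one-sentence remark is needed.
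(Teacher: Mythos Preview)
Your proposal is correct and follows essentially the same approach as the paper: use \Cref{thm:lattice} with \Cref{thm:garside} to see that $K_W$ is a classifying space, and combine with \Cref{thm:deformation-retraction} (which already packages \Cref{thm:XW'}) to conclude that $Y_W$ is aspherical. Your detour through \Cref{thm:standard-dual-isomorphism} and the compatibility check is not needed, since $\pi_1(Y_W)\cong G_W$ is already known by \cite{van1983homotopy} and the $K(\pi,1)$ conjecture only requires asphericity of $Y_W$.
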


\begin{proof}
    By \Cref{thm:deformation-retraction}, the orbit configuration space $Y_W$ is homotopy equivalent to $K_W$, which is a classifying space by \Cref{thm:lattice,thm:garside}.
\end{proof}

In light of the isomorphism $G_W \cong W_w$, we can now use the Garside structure of $W_w$ to study the standard Artin group $G_W$.

\begin{theorem}
    Rank-three Artin groups are Garside groups.
    \label{thm:standard-garside}
\end{theorem}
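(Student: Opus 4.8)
The plan is to combine the dual-structure results of the paper with the standard-to-dual isomorphism just established. First I would recall that, by \Cref{thm:lattice}, the interval $[1,w]$ in the rank-three hyperbolic Coxeter group $W$ is a lattice, and it is balanced since the generating set $R$ of reflections is closed under conjugation (as noted in \Cref{sec:coxeter-groups}). Hence \Cref{thm:garside} applies to the interval group $W_w$ associated with $[1,w]$: the group $W_w$ is a Garside group, with Garside structure coming from the balanced lattice $[1,w]$. This is exactly the content of \Cref{cor:dual-garside}.

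The remaining step is to transport this conclusion to the standard Artin group $G_W$. By \Cref{thm:standard-dual-isomorphism}, the natural map $G_W \to W_w$ is an isomorphism; since being a Garside group is a property of a group together with a chosen Garside structure, and here it is realized concretely, the isomorphism $G_W \cong W_w$ exhibits $G_W$ as a Garside group (with the dual Garside structure). For the spherical and affine cases, which we have agreed to treat separately, the statement is classical: spherical Artin groups are Garside by Garside, Brieskorn--Saito, Deligne, and Bessis (the standard and dual Garside structures), and affine Artin groups of rank three that are not of the excluded hyperbolic family are among the cases covered by the references recalled in \Cref{sec:artin-groups}; in fact the rank-three affine groups $\tilde A_2, \tilde C_2, \tilde G_2$ all have $[1,w]$ a lattice as well (they are the rank-three affine groups, and the lattice failure of \cite{mccammond2015dual} occurs only in higher rank or for $\tilde C_n$, $n\ge 3$), so the same argument via \Cref{thm:garside} applies verbatim there too.

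There is essentially no obstacle here: the theorem is a direct corollary of \Cref{thm:lattice}, \Cref{thm:garside}, and \Cref{thm:standard-dual-isomorphism}. The only point requiring a word of care is to make explicit that the isomorphism $G_W\cong W_w$ is compatible with the dual presentation — i.e., that under this isomorphism the generators of $W_w$ (the $1$-cells $[x]$ of $K_W$, $x\in[1,w]\setminus\{1\}$) together with the relations $[x][y]=[xy]$ give a Garside presentation of $G_W$ — but this is immediate from the fact that the natural map is induced by the inclusion $X_W'\hookrightarrow K_W$, which is a homotopy equivalence by \Cref{thm:deformation-retraction} and hence an isomorphism on fundamental groups. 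I would therefore write the proof as a short deduction.

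\begin{proof}
    By \Cref{thm:standard-dual-isomorphism}, the standard Artin group $G_W$ is isomorphic to the dual Artin group $W_w$, which is the interval group associated with the balanced lattice $[1,w]$ (\Cref{thm:lattice}).
    By \Cref{cor:dual-garside} (equivalently, by \Cref{thm:garside}), $W_w$ is a Garside group.
    Hence so is $G_W$.
\end{proof}
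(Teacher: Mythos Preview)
Your proof is correct and takes essentially the same approach as the paper: the paper's own proof is the one-line deduction ``This is an immediate consequence of \Cref{thm:standard-dual-isomorphism} and \Cref{cor:dual-garside},'' which is exactly what your final \texttt{proof} environment contains.
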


\begin{proof}
    This is an immediate consequence of \Cref{thm:standard-dual-isomorphism} and \Cref{cor:dual-garside}.
\end{proof}

In particular, using the dual Garside structure, we can solve the word problem and easily prove that the center is trivial (except in the spherical cases).

\begin{theorem}
    Rank-three Artin groups have a solvable word problem.
    \label{thm:word-problem}
\end{theorem}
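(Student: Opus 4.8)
The plan is to deduce solvability of the word problem directly from the Garside structure obtained in the previous results. By Theorem~\ref{thm:standard-dual-isomorphism} (and the observation at the start of this section that the spherical and affine cases are already known), it suffices to treat the hyperbolic case and work with the dual Artin group $W_w$, which is the interval group associated with the noncrossing partition poset $[1,w]$. By Theorem~\ref{thm:lattice} the interval $[1,w]$ is a lattice, and it is balanced because $R$ is closed under conjugation (see \Cref{sec:coxeter-groups}); hence Theorem~\ref{thm:garside} applies and tells us that the word problem for $W_w \cong G_W$ is solvable \emph{provided} that one can algorithmically check equality in $[1,w]$ and compute meets and joins there.

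So the real content is to verify this algorithmic proviso. First I would fix a concrete presentation of the ambient data: $W$ acts by isometries on $\H^2$, and every element $u \in [1,w]$ can be encoded by a matrix in $\PGL(2,\R)$ (via \Cref{sec:half-plane-model}) whose entries lie in a fixed number field determined by the labels $m_1,m_2,m_3$ of the Coxeter diagram — explicitly, $\Q(\cos(\pi/m_1),\cos(\pi/m_2),\cos(\pi/m_3))$, with the convention $\cos(\pi/\infty)=1$. Equality of two such elements of $W$ is then just equality of matrices up to scalar, which is decidable since arithmetic in this number field is effective. Membership of a candidate element $u$ in $[1,w]$ reduces to checking $\ell_R(u) + \ell_R(u^{-1}w) = \ell_R(w) = 3$; since the reflection length of an isometry of $\H^2$ equals the dimension of its moved set (\cite[Theorem 5.2]{mccammond2021factoring}, cited in \Cref{iso-hyperboloid}), one computes reflection lengths by a finite-dimensional linear-algebra calculation over the same number field. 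The order relation $u \le v$ is checked the same way.

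Next I would explain how to compute meets and joins. Because $[1,w]$ is a finite-height ($\le 3$) lattice, both operations can in principle be computed by enumerating bounds; the point is to show one can enumerate the finitely many relevant elements effectively. Given $u$, the set of reflections $r$ with $r \le u$ is finite and can be listed: they are exactly the reflections $r \in R_0$ whose moved set is contained in $\Mov(u)$ (using \Cref{lemma:mov}), and the fixed lines of such reflections form a discrete $\langle w\rangle$-cofinite family on $\Min(t)$ or through the fixed point of a rotation, described explicitly in \Cref{lemma:reflections-below-translation} and \Cref{lemma:rotations}; likewise the rank-two elements below $u$ are finite in number. From these finite lists the lower bounds of a pair $u_1,u_2$ (respectively the upper bounds, which are controlled via the complement map $x \mapsto x^{-1}w$ together with \Cref{lemma:mov}) can be listed and the unique maximal (respectively minimal) one selected. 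Alternatively, and more cleanly, one can invoke the EL-labeling of $[1,w]$ established in Theorem~\ref{thm:shellability}: the axial ordering $\prec$ on $R_0$ gives, for every $u \in [1,w]$, a canonical ($\prec$-increasing, lexicographically minimal) normal form as a product of reflections, and normal forms can be compared and manipulated algorithmically, which is precisely the kind of effective structure Garside theory needs for a normal-form-based solution of the word problem.

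The main obstacle — or rather the only place requiring genuine care rather than bookkeeping — is making the ``enumerate the reflections and rank-two elements below a given $u$'' step genuinely effective and terminating: one must bound the search, using the discreteness statements (\Cref{rmk:reflections-distance} gives a positive lower bound on $d(\Fix(r),\ell)$ for horizontal reflections, and \Cref{lemma:pedal-triangle} pins down the geometry along $\ell$) to guarantee that only finitely many candidates need to be tested and that this finite set is computable from the matrix of $u$. Once that is in place, checking equality and computing meets and joins in $[1,w]$ are all effective, and Theorem~\ref{thm:garside} delivers the conclusion. I would therefore structure the written proof as: (1) reduce to the hyperbolic case and to $W_w \cong G_W$; (2) note $[1,w]$ is a balanced lattice, so Theorem~\ref{thm:garside} applies modulo the algorithmic proviso; (3) observe that elements of $[1,w]$ are represented by matrices over an effective number field, so equality is decidable; (4) observe that reflection length, the order relation, and hence meets and joins are computable, using \Cref{lemma:mov}, \Cref{lemma:reflections-below-translation}, \Cref{lemma:rotations}, and the finiteness/discreteness afforded by \Cref{rmk:reflections-distance}; (5) conclude.
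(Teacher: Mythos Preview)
Your high-level strategy coincides with the paper's: reduce to the hyperbolic case via \Cref{thm:standard-dual-isomorphism}, invoke \Cref{thm:garside} for the balanced lattice $[1,w]$, and then verify the algorithmic proviso (decide equality, compute meets and joins). The paper also uses self-duality of $[1,w]$ to reduce meets to joins, and it checks equality via the word problem in $W$ rather than via matrix arithmetic over a number field, but those are cosmetic differences.

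There is, however, a genuine gap in your implementation of the meet/join computation. You assert that ``the set of reflections $r$ with $r \le u$ is finite and can be listed'' for a rank-two element $u$. This is false when $u$ is a translation: by \Cref{lemma:reflections-below-translation} and its proof, the reflections below a translation $t \in [1,w]$ form a bi-infinite sequence $(\dotsc, r_{-1}, r_0, r_1, \dotsc)$ along $\Min(t)$, and the subgroup $W_t$ they generate is infinite of type $\tilde A_1$. The phrase ``discrete $\langle w\rangle$-cofinite family on $\Min(t)$'' does not rescue this, since $\langle w \rangle$ does not act on the reflections below a fixed $t$. Consequently your proposed ``enumerate all lower bounds and pick the maximal one'' does not terminate as stated.

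The paper avoids this by never enumerating reflections below a rank-two element. Instead it computes the join of two reflections $r_1, r_2$ directly from the geometry of their fixed lines: if $\Fix(r_1)$ and $\Fix(r_2)$ meet at a point $p$, one checks whether $p$ is an axial vertex (a finite check against the vertices of the axial chambers near the projection of $p$ onto $\ell$), and if so the join is the unique rotation in $[1,w]$ fixing $p$, otherwise it is $w$. If $\Fix(r_1)$ and $\Fix(r_2)$ are disjoint, one takes their common perpendicular $l$ and uses local finiteness of the reflection arrangement to list the finitely many reflections $r \in W$ whose fixed lines cross $l$ between $\Fix(r_1)$ and $\Fix(r_2)$; by \Cref{lemma:reflections-below-translation}, the candidate translation (if it exists in $[1,w]$) must be $rr_1$ or $r_1r$ for one of these $r$, and one checks each candidate by testing whether its left complement $wt^{-1}$ is a reflection. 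Your proposal could be repaired along these lines, but as written the enumeration step does not go through.
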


\begin{proof}
    By \Cref{thm:garside}, it is enough to be able to check equality and compute meets and joins in $[1,w]$.
    Equality can be checked using a solution for the word problem in $W$, expressing elements as a product of simple reflections (see for example \cite[Chapter 4]{bjorner2006combinatorics}).
    Since $[1,w]$ is self-dual, it is enough to be able to compute joins.
    
    The only non-trivial case to consider is the join of two distinct reflections $r_1$ and $r_2$.
    Representing $W$ as a group of isometries of the hyperbolic plane, compute the intersection point of $\Fix(r_1)$ and $\Fix(r_2)$. If such a point $p$ exists (possibly at infinity), then the join $r_1 \vee r_2$ is either the only rotation $u \in [1,w]$ fixing $p$ (if $p$ is an axial vertex) or $w$ otherwise.
    To check if $p$ is an axial vertex, check whether $p$ coincides with one of the vertices of the axial chambers whose closure contains the projection of $p$ onto the Coxeter axis $\ell$.
    
    Suppose now that $\Fix(r_1)$ and $\Fix(r_2)$ do not intersect in $\H^2 \cup \partial \H^2$.
    The join $r_1 \vee r_2$ is either the only translation $t \in [1, w]$ whose axis is the line $l$ orthogonal to both $\Fix(r_1)$ and $\Fix(r_2)$ (if such a translation exists in $[1,w]$) or $w$ otherwise.
    There are only a finite number of reflections $r \in W$ whose fixed line intersects $l$ between $\Fix(r_1)$ and $\Fix(r_2)$ (included). By \Cref{lemma:reflections-below-translation}, the translation $t$ (if it exists) has to be equal to $rr_1$ or $r_1r$ for at least one of these reflections $r$.
    Then we can consider all elements of this form and check whether one of them is a translation $t$ such that $wt^{-1}$ is a reflection (this is equivalent to $t$ being in $[1,w]$, because $t$ has reflection length $2$).
    If we find such a translation $t \in [1, w]$, then $t = r_1 \vee r_2$ because the axis of $t$ is orthogonal to both $\Fix(r_1)$ and $\Fix(r_2)$ by the first part of \Cref{lemma:reflections-below-translation}.
\end{proof}

\begin{theorem}
    Non-spherical rank-three Artin groups have a trivial center.
    \label{thm:center}
\end{theorem}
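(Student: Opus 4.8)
The plan is to deduce the statement from the dual Garside structure. By \Cref{thm:standard-dual-isomorphism} we have $G_W\cong W_w$, so it suffices to show that $Z(W_w)$ is trivial. Recall (\Cref{cor:dual-garside}) that $W_w$ is a Garside group whose Garside element is the chosen Coxeter element $w$ and whose lattice of simple elements is $[1,w]$, and that conjugation by $w$ acts on $[1,w]$ as the automorphism $\varphi$ of \Cref{def:phi}. I would invoke the standard description of the center of a Garside group (see e.g.\ \cite{dehornoy2015foundations}): when the Garside structure is indecomposable, $Z(W_w)$ is cyclic, generated by the smallest positive power of $w$ that is central --- in particular it is trivial as soon as no positive power of $w$ is central. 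The proof then reduces to two independent points: (a) the Garside structure $[1,w]$ is indecomposable; (b) no positive power of $w$ is central in $W_w$.

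For (a), I would observe that the atoms of the Garside monoid are exactly the reflections in $R_0$, and that these include the three simple reflections $a,b,c$, since $a,b,c\leq w=abc$ (for instance $bw=(bab)c$ shows $l_R(bw)\leq 2$, while the triangle inequality for reflection length gives $l_R(bw)\geq l_R(w)-l_R(b)=2$, so $b\leq w$; similarly for $a$ and $c$). A nontrivial decomposition of the Garside structure would split the atoms into two nonempty blocks commuting elementwise; connectedness of the Coxeter diagram of the irreducible group $W$ forces $a,b,c$ into a single block, and any reflection commuting with all of $a,b,c$ would lie in $Z(W)$, which is trivial because $W$ is infinite and irreducible. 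Hence all atoms lie in one block and $[1,w]$ is indecomposable.

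For (b), I would argue geometrically. Suppose $w^e\in Z(W_w)$ for some $e\geq 1$. Then conjugation by $w^e$ fixes every atom $[r]$, hence $\varphi^e(r)=w^{-e}rw^e=r$ in $W$ for every reflection $r\leq w$; since $w^{-e}rw^e$ is the reflection across the line $w^{-e}(\Fix(r))$, the isometry $w^{e}$ must preserve $\Fix(r)$. But $w$ has reflection length $3$, hence is a glide reflection with positive translation length $\mu$, so $w^{e}$ is again a hyperbolic isometry --- a translation if $e$ is even and a glide reflection if $e$ is odd --- with axis $\ell$ and translation length $e\mu>0$; such an isometry has $\ell$ as its unique invariant geodesic. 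Therefore $\Fix(r)=\ell$ for every reflection $r\leq w$, contradicting \Cref{lemma:reflections-and-axis}, which says that $\ell$ is not a reflection line. Consequently $Z(W_w)=\{1\}$, hence $Z(G_W)=\{1\}$. As an alternative, one may instead combine the $K(\pi,1)$ property from \Cref{thm:conjecture} with \cite{jankiewicz2022conjecture}. I expect the only delicate step to be the appeal to the Garside-theoretic center criterion --- one must check that it still applies when $[1,w]$ is infinite, as in the hyperbolic case; the indecomposability argument and the uniqueness of the invariant geodesic of a hyperbolic isometry are both elementary.
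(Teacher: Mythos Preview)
Both your proposal and the paper exploit the dual Garside structure, but they diverge at the key reduction step. The paper does not invoke a general ``Picantin--type'' description of the center; instead it argues directly with normal forms: writing a central element as $x=\iota(w)^{-m}\iota(u_1)\cdots\iota(u_k)$ and using \cite[Proposition~2.14]{mccammond2017artin} (stated for interval groups, hence valid with infinitely many simples) to deduce that each $u_i$ commutes with $w$ in $W$. A short geometric argument via the function $\xi$ shows that no $u\in[1,w]\setminus\{1,w\}$ commutes with any nontrivial power of $w$, forcing $x=\iota(w)^k$; the same fact then rules out $k\neq 0$. Your route instead packages the first half of this into an appeal to the theorem ``center of an indecomposable Garside group $=$ powers of $\Delta$'', and then handles the second half by your step~(b).

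Your steps (a) and (b) are both correct; in particular, the uniqueness of the invariant geodesic of a hyperbolic isometry gives a clean proof of~(b), different from but equivalent to the paper's $\xi$--argument. The one genuine gap is exactly the point you flag yourself: the center theorem for Garside groups is, in the standard references (Picantin; and as far as we can see also in \cite{dehornoy2015foundations}), established for structures with finitely many simples, and its extension to the case where $[1,w]$ is infinite is not something you can cite off the shelf. The paper's normal-form computation is precisely the direct verification of that reduction in this setting, so your proof becomes complete once you replace the black-box appeal by that argument---at which point it essentially converges with the paper's. What your formulation buys is a cleaner separation into ``indecomposability'' and ``no central power of $\Delta$'', which would generalize nicely if the infinite-type center theorem were available; what the paper's formulation buys is that it actually closes the argument with tools already on the table.
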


\begin{proof}
    Let $x$ be an element of the center of a rank-three Artin group $G_W$.
    The dual Garside structure yields a normal form $x = \iota(w)^{-m} \iota(u_1) \iota(u_2) \dotsm \iota(u_k)$, where $u_1, \dotsc, u_k \in [1,w]$ and $\iota\colon [1,w] \to G_W$ is the natural immersion.
    A well-known property of the normal form is that $x$ commutes with $\iota(w)$ if and only if all $u_i$'s commute with $w$ in $W$ \cite[Proposition 2.14]{mccammond2017artin}.
    In particular, this has to hold since $x$ is in the center of $G_W$.
    However, for any $u \in [1, w]$ different from $1$ and $w$, no non-trivial power of $w$ commutes with $u$: if $u$ is a reflection, then $\xi(w^n u w^{-n}) = w^n \xi(u) \neq \xi(u)$; passing to the (left) complement, we reach the same conclusion if $u$ is a rotation or a translation.
    Therefore, $x$ is a power of $\iota(w)$.
    Thus $x$ does not commute with any $\iota(u)$ where $u\in [1,w] \setminus \{1, w \}$, unless $x = 1$.
\end{proof}

Note that the word problem was already solved by Chermak \cite{chermak1998locally} for locally non-spherical Artin groups, a class that includes rank-three Artin groups.
In a preprint by Jankiewicz and Schreve \cite{jankiewicz2022conjecture}, posted while this article was in preparation, it is shown that the $K(\pi, 1)$ conjecture implies that the center is trivial (for a non-spherical irreducible Artin group), thus providing another proof that non-spherical rank-three Artin groups have a trivial center.

\bibliographystyle{amsalpha-abbr}
\bibliography{bibliography}

\end{document}